\title{Partial Regularity for Harmonic Maps into Spheres at a Singular or Degenerate Free Boundary}
\date{}
\author{Roger Moser \thanks{Department of Mathematical Sciences, University of Bath, Bath BA2 7AY, UK. Email: r.moser@bath.ac.uk } \and James Roberts \thanks{Department of Mathematical Sciences, University of Bath, Bath BA2 7AY, UK. Email: j.e.roberts@bath.ac.uk}}
\numberwithin{equation}{section}
\newtheorem{thm}{Theorem}
\newtheorem{lem}{Lemma}
\newtheorem{cor}{Corollary}
\theoremstyle{definition}
\theoremstyle{remark}
\newtheorem*{rem}{Remark}
\newtheorem*{conv}{Convention}
\numberwithin{thm}{section}
\numberwithin{prop}{section}
\numberwithin{lem}{section}
\numberwithin{defn}{section}
\numberwithin{cor}{section}
\begin{document}

\maketitle
 
 \begin{abstract}
 We prove partial regularity of weakly stationary harmonic maps with (partially) free boundary data on manifolds where the domain metric may degenerate or become singular along the free boundary at the rate $d^\alpha$ for the distance function $d$ from the boundary.
 \end{abstract}

 \section{Introduction}\label{intro}
 
The regularity of harmonic mappings of Riemannian manifolds with free or constrained boundary data has been increasingly analysed in recent years in view of the connection between these maps and recent definitions of fractional harmonic mappings of Riemannian manifolds, see for example \cite{millot2019partial,millot2017minimizing,moser2011intrinsic,roberts2018regularity}. In order to obtain (partial) regularity for fractional harmonic maps, one is lead to the study of free boundary harmonic maps on domains where the Riemannian metric may degenerate or become singular along part of the boundary of the domain, depending on the fractional power in question.  The regularity of free boundary harmonic maps, with smooth bounded metrics on the domain, was studied substantially earlier and is of independent interest. For example,  free boundary harmonic maps constitute generalisations of minimal surfaces with free boundary. As pointed out in  \cite{MR918841}, they are also connected with critical points of partially linearised models of elasticity with free boundary conditions and can also arise in the theory of liquid crystals when modelling surface effects near a solid boundary interface. 
Motivated by their connection with fractional harmonic maps and the possible other geometric and physical applications, our goal is to establish a partial regularity theory for a class of free boundary harmonic mappings on domains where the metric is
conformal to a smooth Riemannian background metric with conformal factor blowing up or vanishing at the boundary.
We state some of our results for general target manifolds, but our main theorems concern sphere valued maps.

Let $\mathcal{M}$ be a smooth Riemannian manifold of dimension  $m \geq 3$ with smooth
non-empty boundary, equipped with a smooth, bounded Riemannian metric $g$. Suppose $\mathcal{N}$ is a smooth, compact Riemannian manifold which we may assume is isometrically embedded in $\mathbb{R}^n$ for some $n \in \mathbb{N}$ due to the theorem of Nash \cite{MR0075639}.  Let $d := \text{dist}(\cdot,\partial\mathcal{M})$. We are particularly interested in Riemannian metrics $h$ on $\mathcal{M}$ of the form
\begin{equation*}
h = d^\alpha g 
\end{equation*}
in a neighbourhood of $\partial \mathcal{M}$
for a fixed $\alpha \in (-\frac{2}{m-2}, \frac{2}{m-2})$;  we will  analyse the regularity of a class of critical points  $v:(\mathcal{M},h)\to\mathbb{R}^n$ of the Dirichlet energy on $(\mathcal{M},h)$  given by
\begin{equation*}
E_{\beta}(v) = \frac{1}{2}\int_{\mathcal{M}} \vert Dv\vert_{h}^2\mathrm{dvol}_{h} = \frac{1}{2}\int_{\mathcal{M}} d^{\beta}\vert Dv\vert_{g}^2\mathrm{dvol}_{g}.
\end{equation*}
Here $D$ is the differential of $v$, $\beta = \frac{\alpha (m-2)}{2} \in (-1,1)$, and $\vert Dv\vert_{g}^2$ is the energy density with respect to $g$. In local coordinates it is written as  
$\vert Dv\vert_{g}^2 = g^{ij}\langle \partial_iv,\partial_jv\rangle$ where $\langle \cdot,\cdot\rangle$ is the Euclidean metric and $(g^{ij}) = (g_{ij})^{-1}$ is the matrix representing $g^{-1}$, and $\mathrm{dvol}_{g}$ is the volume form which in local coordinates satisfies $\mathrm{dvol}_{g} = \sqrt{\text{det}(g)}\mathrm{d}x $.
Throughout the paper, we assume the convention that repeated indices indicate summation over the appropriate range (unless
the index is $m$, which is fixed).

Observe that when $\beta = 0$ the energy $E_{0}$ is nothing other than the Dirichlet energy on the manifold $(\mathcal{M},g)$. In particular, if we were to allow $m = 2$ then, regardless of the power $\alpha$, we have $E_{\beta}\equiv E_0$; in this case all the results in this article follow from known results for free boundary harmonic maps, which we will discuss subsequently. 

In \cite{roberts2018regularity} the second author established partial regularity up to the boundary for minimisers $v:\mathcal{M}\to\mathcal{N}$  of $E_{\beta}$ with respect to a free boundary condition in the case where $\mathcal{M}$ is a Euclidean half-space. In this article  we will establish partial regularity for maps which are merely critical points of $E_{\beta}$ with respect to both outer and inner variations
and still with a free boundary condition. Henceforth we refer to such maps as called \emph{weakly stationary harmonic maps with free boundary data}.

If we have free boundary data on the whole natural boundary of a manifold, we may find that only constant solutions to the
problem exist. As we are studying only the free boundary here, however, we simply exclude any part
where the boundary data are not free. Thus we may imagine that $\mathcal{M}$ is part of a larger manifold $\mathcal{M}'$
and $v$ is prescribed on $\mathcal{M}' \setminus \mathcal{M}$, but everywhere on $\partial \mathcal{M}$, it is free.
As a consequence, we do not assume that $\mathcal{M}$ is complete.

For reasons discussed subsequently, we will also focus on the case where $\mathcal{N} = \mathbb{S}^{n-1}\subset\mathbb{R}^n$ is the round unit sphere.  An abridged version of our partial regularity theorem states the following.

 \begin{thm}\label{mainthmabridged}
Suppose
$v \colon \mathcal{M} \to \mathbb{S}^{n-1}$ is weakly stationary harmonic with free boundary data.
Then there is a closed set $\Sigma \subset \mathcal{M}$ such that $v$ is H\"older continuous in $\mathcal{M}\backslash\Sigma$
and $\mathscr{H}^{m-2}(\Sigma\cap \mathrm{int}(\mathcal{M})) = 0$ and $\mathscr{H}^{m+\beta-2}(\Sigma\cap\partial \mathcal{M}) = 0$. 
\end{thm}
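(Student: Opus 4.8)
The plan is to run the Evans--Bethuel scheme for partial regularity of stationary harmonic maps into spheres, adapted to the degenerate conformal factor and to the free boundary, treating the interior and the boundary parts of $\Sigma$ in parallel. The outer variations will be used to extract the Euler--Lagrange system and its div--curl structure, the inner variations to produce a monotonicity formula. Away from $\partial\mathcal{M}$ the factor $d^\beta$ is smooth and bounded above and below, so there $v$ is a weakly stationary harmonic map into $\mathbb{S}^{n-1}$ from the smooth Riemannian manifold $(\mathrm{int}(\mathcal{M}),h)$, and the bound $\mathscr{H}^{m-2}(\Sigma\cap\mathrm{int}(\mathcal{M})) = 0$ is the interior partial regularity theorem of Evans (with Bethuel's refinements). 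Near a boundary point I would pass to boundary normal coordinates flattening $\partial\mathcal{M}$ to $\{x_m = 0\}$, so that $d(x) = x_m + O(x_m^2)$ and $g$ is smooth with $g_{im} = \delta_{im}$ on $\{x_m = 0\}$. As the boundary data are entirely free, a critical point of $E_\beta$ satisfies a Neumann-type weak boundary condition (vanishing weighted conormal derivative), so even reflection across $\{x_m = 0\}$ produces a map, still called $v$, on a full ball, which is weakly stationary harmonic into $\mathbb{S}^{n-1}$ for the reflected energy, with weight $w$ comparable to $\vert x_m\vert^\beta$. The hypothesis $\beta = \alpha(m-2)/2 \in (-1,1)$ is precisely what makes $w$ a Muckenhoupt $A_2$ weight, so the weighted Sobolev space $W^{1,2}_w$, its trace theory, weighted Poincar\'e inequalities, and the De Giorgi--Nash--Moser and Schauder theory for the operator $\mathrm{div}(w\nabla\,\cdot\,)$ of Fabes--Kenig--Serapioni are all available.

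From weak stationarity (testing the inner variation generated by a suitable radial vector field) I would next derive a monotonicity formula: for $x_0 \in \partial\mathcal{M}$ the normalised weighted energy
\begin{equation*}
\Theta(x_0,\rho) = \rho^{2-m-\beta}\int_{B_\rho^+(x_0)} d^\beta\,\vert Dv\vert_g^2\,\mathrm{dvol}_g
\end{equation*}
is non-decreasing in $\rho$ up to a factor $e^{C\rho}$ absorbing the curvature of $g$ and the lower-order terms in $d = x_m + O(x_m^2)$, and likewise $\rho^{2-m}\int_{B_\rho(x_0)}\vert Dv\vert^2$ in the interior; the exponent $m+\beta-2 \ge m-3 \ge 0$ keeps this quantity meaningful. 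In particular the density $\Theta(x_0) = \lim_{\rho\to 0}\Theta(x_0,\rho)$ exists and is upper semicontinuous in $x_0$.

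The heart of the matter is an $\varepsilon$-regularity statement: there is $\varepsilon_0 > 0$ so that $\Theta(x_0,R) < \varepsilon_0$ implies $v \in C^{0,\gamma}$ on $B_{R/2}^+(x_0)$ for some $\gamma > 0$. Here I would exploit the sphere structure. Since $\vert v\vert = 1$ we have $v^k\partial_i v^k = 0$, hence $\partial_i v^j = \Omega_i^{jk}v^k$ with $\Omega_i^{jk} = v^j\partial_i v^k - v^k\partial_i v^j$ antisymmetric in $j,k$, and the reflected weighted harmonic map system forces $\partial_i(w\,\Omega_i^{jk}) = 0$ in the weak sense --- even reflection is essential here, so that no interface term across $\{x_m=0\}$ survives. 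Thus each component solves $\mathrm{div}(w\nabla v^j) = w\,\Omega^{jk}\cdot\nabla v^k$ with a $w$-divergence-free antisymmetric coefficient. Comparing $v$ on $B_R^+$ with the solution $\phi$ of $\mathrm{div}(w\nabla \phi) = 0$ sharing its boundary values, for which the degenerate regularity theory gives $\int_{B_\rho}w\vert\nabla \phi\vert^2 \le C(\rho/R)^2\int_{B_R}w\vert\nabla \phi\vert^2$, the difference $v-\phi$ is controlled by a weighted compensated-compactness estimate: representing $\Omega$ through a weighted Hodge decomposition and using $H^1_w$--$\mathrm{BMO}_w$ duality for the $A_2$ weight $w$, the relevant dual norm of $w\,\Omega\cdot\nabla v$ is bounded by a small positive power of $\varepsilon_0$ times $\int_{B_R}w\vert Dv\vert^2$. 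Combining the two estimates yields, for a suitable $\theta \in (0,1)$,
\begin{equation*}
\int_{B_{\theta R}}w\,\vert Dv\vert^2 \le \tfrac{1}{2}\,\theta^{m+\beta-2}\int_{B_R}w\,\vert Dv\vert^2 ,
\end{equation*}
which iterates to a weighted Morrey decay $\int_{B_\rho}w\vert Dv\vert^2 \le C\rho^{m+\beta-2+2\gamma}$, and a weighted Campanato/Morrey embedding then gives $v\in C^{0,\gamma}$ on $B_{R/2}^+$. (Higher regularity would follow by bootstrapping through the weighted Schauder theory, but H\"older continuity suffices for the theorem.)

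Finally, define $\Sigma$ as the complement of the largest relatively open subset of $\mathcal{M}$ on which $v$ has a H\"older continuous representative. By the $\varepsilon$-regularity statement and the monotonicity formula, $\Sigma = \{x_0 : \Theta(x_0) \ge \varepsilon_0\}$, which is closed by upper semicontinuity of the density. The finite measures $\mu = \vert Dv\vert^2\,dx$ in the interior and $\mu = d^\beta\vert Dv\vert_g^2\,\mathrm{dvol}_g$ near $\partial\mathcal{M}$ are absolutely continuous with respect to Lebesgue measure, and by monotonicity each $x_0\in\Sigma$ satisfies $\mu(B_\rho(x_0)) \ge c\,\varepsilon_0\,\rho^{m-2}$, respectively $\mu(B_\rho(x_0)) \ge c\,\varepsilon_0\,\rho^{m+\beta-2}$, for all small $\rho$; a Vitali covering argument together with $\mu(A)\to 0$ as $\vert A\vert\to 0$ then forces $\mathscr{H}^{m-2}(\Sigma\cap\mathrm{int}(\mathcal{M})) = 0$ and $\mathscr{H}^{m+\beta-2}(\Sigma\cap\partial\mathcal{M}) = 0$ (note $m+\beta-2 < m-1$, so the latter is a genuine estimate within the $(m-1)$-dimensional boundary). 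The main obstacle is the $\varepsilon$-regularity step: making the compensated-compactness estimate run simultaneously with the degenerate $A_2$ weight $\vert x_m\vert^\beta$ and the free boundary. Even reflection converts the Neumann condition into the $w$-divergence-free property of $\Omega$ across $\{x_m=0\}$, after which one needs the weighted analogue of the Coifman--Lions--Meyer--Semmes div--curl lemma together with sufficient decay theory for $\mathrm{div}(w\nabla\,\cdot\,)$; controlling the error terms in the monotonicity formula arising because $d$ is only asymptotically $x_m$ and $g$ is not flat is a secondary technical point.
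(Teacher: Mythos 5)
Your outline matches the Evans-type strategy that the paper follows, and the role you assign to each ingredient (monotonicity, div--curl structure, $\varepsilon$-regularity, covering) is correct. But the central compensated-compactness step is handled by a genuinely different route in the paper, and in your version that step is underspecified and is precisely where the difficulty hides.

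You propose to represent $\Omega$ via a weighted Hodge decomposition and then invoke an $\mathcal{H}^1_w$--$\mathrm{BMO}_w$ duality adapted to the $A_2$ weight $w = |x_m|^\beta$. A weighted Hardy--BMO pairing of this type is not part of the standard toolbox, and a weighted Hodge decomposition with the right mapping properties would itself need to be constructed. The paper avoids both. Its Lemma \ref{weightedHardy} and Lemma \ref{weightedHardyloc} prove that for $Dv \in L^2_\beta$ and a weakly divergence-free $X \in L^2_{-\beta}$ (the weight being absorbed into the definition of $X$, which is exactly what the Euler--Lagrange system produces via Lemma \ref{conslawElequiv}), the scalar product $Dv\cdot X$ lies in the \emph{ordinary} Hardy space $\mathcal{H}^1(\mathbb{R}^m)$, with
\[
\|Dv\cdot X\|_{\mathcal{H}^1(\mathbb{R}^m)} \le C\,\|Dv\|_{L^2_\beta}\,\|X\|_{L^2_{-\beta}},
\]
the weights appearing only in the bound and not in the function space. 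This is achieved by a careful choice of an auxiliary exponent $\gamma$ (constrained by \eqref{Hardygammachoicerec}) and a chain of weighted Sobolev--Poincar\'e and Muckenhoupt maximal-function estimates (Lemmas \ref{weightedsobolevpoinc}, \ref{ApHardylittlewood}, \ref{Apformaximal}). After that, the pairing with $\mathrm{BMO}$ is the classical unweighted one, and the $\mathrm{BMO}$ bound on the reflected map $\tilde v$ comes for free from the monotonicity formula via Lemma \ref{statharmbmo}. If you want to make your proposal rigorous, this is the specific gap you would need to close; the paper's device of landing in the unweighted $\mathcal{H}^1$ is the cleanest way to do so.

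A secondary difference: you use an Evans-style harmonic replacement, comparing $v$ to the solution $\phi$ of the weighted degenerate-elliptic equation with the same boundary values, and feeding the Fabes--Kenig--Serapioni decay for $\phi$ directly into the nonlinear problem. The paper instead separates these steps: it proves a Caccioppoli-type inequality (Lemma \ref{cacctypeineq}) by the Hardy--BMO pairing, and then controls the weighted $L^2$-oscillation of $v$ through a blow-up argument (Lemma \ref{bdrycontimppoincscal}), in which the linear degenerate theory (Lemma \ref{lindecay}, citing Fabes--Kenig--Serapioni) is applied only to the blow-up limit. Both routes plausibly work; the paper's version has the advantage that the linear theory is never applied to a boundary-value problem with rough data, only to a clean limiting equation. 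Your claim that the reflected map is weakly stationary harmonic for the reflected energy is slightly too strong (the reflected metric is only Lipschitz, so the full system on the doubled ball is not immediate), but the operative fact you need --- that the weighted divergence-free property of the vector fields $X_{ab}$ extends across $\{x_m = 0\}$ with no interface term --- is correct and is the content of Lemma \ref{conslawElequiv} and the remark following it. The closing covering argument and the use of $m + \beta - 2 < m - 1$ are as in the paper.
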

\noindent Here, $\text{int}(\mathcal{M}) = \mathcal{M} \setminus \partial \mathcal{M}$ is the interior of $\mathcal{M}$
and $\mathscr{H}^t$ is the $t$-dimensional Hausdorff measure. The theorem should properly be formulated for maps in
a weighted Sobolev space, with weights behaving like $d^\beta$ near the boundary, but we leave the technical details for later.
  
Away from the boundary $\partial \mathcal{M}$, the regularity stated in the Theorem \ref{mainthmabridged} follows from known regularity theory for \emph{stationary harmonic maps}; these maps are critical points, with respect to both outer and inner variations of the Dirichlet energy for mappings between Riemannian manifolds.  H\'elein established that weakly harmonic maps on two-dimensional domains are smooth \cite{MR1131583}. However, when the dimension of the domain $m$ satisfies $m \geq 3$, harmonic maps are known to have singularities (points of discontinuity) in general and may even be discontinuous on the whole of their domain;  Rivi\`ere has constructed discontinuous harmonic maps into spheres in \cite{MR1368247}. In order to achieve even partial regularity, additional constraints on harmonic maps are required. One way to proceed is to impose geometric constraints, such as the co-domain having non-positive sectional curvature. In this case, harmonic maps are also known to be smooth, see  \cite{MR664498} for example.  In order to deal with general codomain manifolds however, one must impose a different type of constraint. Typically one considers  stationary harmonic maps, or at least maps which satisfy a monotonicity inequality for the re-scaled energy.     For such maps it is possible to estimate the size of the singular set; stationary harmonic maps are smooth in the interior of their domain with  the possible exception of a set of vanishing $(m-2)$-dimensional Hausdorff measure. This was first established for sphere valued maps by Evans \cite{MR1143435} and then extended to more general codomain manifolds $\mathcal{N}$ by Bethuel \cite{MR1208652}. 

Near the boundary, when $\beta = 0$  the regularity stated in Theorem \ref{mainthmabridged} follows from the existing regularity theory for stationary free boundary harmonic maps. As mentioned previously, the regularity theory for such maps was originally investigated with geometric or physical motivations in mind. The regularity of free boundary harmonic maps has been investigated by Baldes \cite{MR683042} and Gulliver and Jost \cite{MR918841} who established full regularity of free boundary harmonic maps under certain geometric constraints, such as the image of the harmonic map lies in a sufficiently small geodesic ball. Scheven later proved partial regularity of free boundary stationary harmonic maps into general target manifolds   \cite{MR2206640} using a reflection  construction in the codomain.
  
When $\beta \neq 0$, Theorem \ref{mainthmabridged} is new. There are related results
of the second author \cite{roberts2018regularity} (studying minimisers of $E_\beta$)
and of Millot, Pegon and Schikorra \cite{millot2019partial} who consider a class of stationary or minimising critical points $v:\mathbb{R}^{m-1}\times (0,\infty)\to\mathbb{R}^n$ of $E_{\beta}$, where the domain is the half-space $\mathbb{R}^{m-1}\times (0,\infty)$ and $v$ satisfies the boundary constraint $v(\mathcal{O})\subset\mathbb{S}^{n-1}$ for some open $\mathcal{O}\subset\partial(\mathbb{R}^{m-1}\times (0,\infty))$. Millot, Pegon and Schikorra use the maps they considered as means to establish partial regularity of a type of  fractional $s$-harmonic maps into spheres. The maps they consider satisfy $\text{div}(x_m^{\beta}Dv) = 0$ in a half-space $\mathbb{R}^{m-1}\times (0,\infty)$ together with  a non-linear Neumann boundary condition. In contrast we require the maps we consider to satisfy   $v(\mathcal{M}) \subset \mathcal{N}$,
which makes the condition   $v(\mathcal{O})\subset \mathcal{N}$ redundant.   Such maps satisfy a harmonic map equation in the interior of the domain and a homogeneous Neumann-type condition on the boundary, see Section \ref{firstvar}.
  
We will prove Theorem  \ref{mainthmabridged} by establishing an  $\varepsilon$-regularity result, see Theorem \ref{epregthm}, and then use a covering argument to conclude the partial regularity stated in Theorem \ref{mainthmabridged}. 
 The key estimate required to prove Theorem \ref{mainthmabridged} is a Caccioppoli-type inequality, see Lemma \ref{cacctypeineq}. For minimisers this kind of inequality is established using comparison maps and we used this approach to prove partial regularity for minimisers of $E_{\beta}$ in \cite{roberts2018regularity}. When considering stationary harmonic maps, it is no longer possible to take advantage of the minimising property and a different approach is required; we will instead take advantage of compensated compactness phenomena in the Euler-Lagrange equation. 

The underlying observation is that for harmonic maps into a sphere, the expression $|Dv|^2$ can be written as a product with a
`div-curl' structure. This idea goes back to H\'elein, who used it to show regularity of weakly harmonic maps from surfaces into spheres \cite{helein1990regularite}. Evans \cite{MR1143435} extended the results of H\'elein to higher dimensional domains.
The availability of such methods is the reason why we concentrate on sphere-valued maps as well.
The regularity theory for the usual harmonic maps was further extended to other target manifolds by Bethuel \cite{MR1208652}, also
using concentrated compactness arguments but with different technical details. There is also an alternative approach
due to Rivi\`ere \cite{Riviere2007}. There is no reason \emph{a priori} why these methods may not be adapted to
our setting, too. However, as we wish to avoid the technical difficulties that come with greater generality and
concentrate on the new challenges stemming from a Riemannian metric of the form $d^\alpha g$, we mostly work with the sphere here.

One of the cornerstones of the regularity theory for harmonic maps is the duality between BMO-spaces and the Hardy
spaces $\mathcal{H}^1(\mathbb{R}^m)$, combined with estimates for the corresponding norms (see, e.g., the work of
Coifman-Lions-Meyer-Semmes \cite{coifman1990compacite} or of Chanillo \cite{doi:10.1080/03605309108820829},
who bypasses this duality but obtains inequalities in the same spirit nevertheless). We will require similar
estimates, but adapted to weighted Sobolev spaces, which do not appear in the existing literature.
The derivation of suitable inequalities will
therefore occupy a significant portion of this paper.

In the aforementioned regularity theories for harmonic, and fractional harmonic, maps it is known that stationary or minimising maps are smooth away from their singular set. In the context where these regularity theories were developed, higher regularity follows from known estimates for continuous or H\"older continuous solutions to semi-linear elliptic equations, see respectively \cite{jost2008riemannian}  and \cite{MR765241} for example. In \cite{roberts2018regularity} the second author studied minimising free boundary harmonic maps
in the case where $(\mathcal{M}, g)$ is a half-space with the Euclidean metric. He used related ideas and some additional arguments to show
smoothness in directions tangential to free boundary despite the singular factor arising from the factor $d^{\beta}$ in the energy. In this article, the anisotropy of the metric $g$ precludes us from directly applying the theory in \cite{roberts2018regularity} unless $g$ is Euclidean (in which case we may may draw conclusions on the smoothness of the maps we consider in directions tangential to the boundary as well). Once H\"older continuity is known, we expect minor modifications of the higher regularity results in \cite{roberts2018regularity} to yield smoothness for the stationary free boundary harmonic maps we consider here. In particular, we expect all the conclusions of Theorem 4.3 of \cite{roberts2018regularity} pertaining to the regularity of the maps considered there, to hold for the maps we consider here (with the exception of the reduced dimension of the singular set in the interior of the domain). However, we will not give the details since we expect all the necessary modifications to be of a purely technical nature. We furthermore observe that there are emerging regularity theories for semi-linear degenerate elliptic equations of the form we consider here, see for example \cite{sire2020liouville}. We note that we could prove a version of Lemma 4.23 in \cite{roberts2018regularity} showing H\"older continuity implies Lipschitz continuity and then the results of \cite{sire2020liouville} would apply to give  $C^{1,\gamma}$ regularity and possibly even $C^{\infty}$ regularity depending on the precise assumptions required.  For the aforementioned reasons, we will not discuss higher regularity any further in this article. 

We now outline the organisation of the paper and the strategy for the proof, further details and references may be found at the beginning of each section. In Section \ref{metriccoord} we introduce local coordinates such that the factor $d^{\beta}$ becomes the weight $x_m^{\beta}$ near the boundary; we also introduce the function spaces we require.  As discussed previously, the range of specified $\alpha$ implies $\beta \in (-1,1)$. It is then well known that the weight $x_m^{\beta}$ then lies in Muckenhoupt class $A_2$, see Section \ref{hardy} for the definition, which allows us to apply weighted counterparts to many results, such as the Sobolev-Poincar\'e inequality, which hold for unweighted Sobolev spaces. In Section \ref{firstvar} we give the full definition of stationary harmonic maps with free boundary data and state the corresponding Euler-Lagrange equations. In Section \ref{mon} we use the criticality with respect to inner variations to establish energy monotonicity formulas for the re-scaled energy on concentric balls for maps into general target manifolds. We first do this in the coordinates in Section \ref{metriccoord} and then show that monotonicity inequalities hold independently of the choice of coordinates. We prove control of the $L^2$ norm of the average difference between $v$ and its average provided the re-scaled energy is assumed sufficiently small in Section \ref{l2}, again for general target manifolds. From Section \ref{spheres} onwards we specialise to the case where the target manifold is a Euclidean sphere. In this case the Euler-Lagrange equations have a similar structure to the unweighted case and we show that $d^{\beta}\vert Dv\vert^2$ can be written in the form of a weighted `div-curl' structure. In Section \ref{hardy} we establish weighted versions of the aforementioned compensated compactness results, both globally and locally. We then use the weighted compensated compactness arguments, together with the structure of the Euler-Lagrange equations to  establish a Caccioppoli-type inequality for the maps we consider in Section \ref{caccdecay}. With this in hand, it is possible to prove that the re-scaled energy at the boundary decays faster than obtained in the monotonicity formulas described in Section \ref{mon}. We combine this fact with known results in the interior for stationary harmonic maps to obtain $\varepsilon$-regularity and partial regularity in Section \ref{epsilonreg}.

\begin{conv}
We introduce the convention that unless stated otherwise we write $C$ to denote a constant which depends only on universal factors such as $m,\beta$ or other uniform constants introduced in the next section and we will not always distinguish between different such constants. 
\end{conv}

\section{Local Coordinates and Function Spaces}\label{metriccoord}

Many of our arguments will make use of local coordinates. As we primarily focus on an analysis of critical points of $E_{\beta}$ near part of the boundary $\partial\mathcal{M}$, it will be convenient to choose local coordinates that
 map a piece of $\partial \mathcal{M}$ to $\mathbb{R}^{m - 1} \times \{0\}$ and at the same time
 preserve the distance (with respect to $g$) to the boundary.  
 
 Consider a coordinate chart $x' \colon \mathcal{U}' \to \mathbb{R}^{m - 1}$ on $\partial \mathcal{M}$ centred at $x_0 := x(p)\in \mathbb{R}^{m-1}\times\{0\}$, where $\mathcal{U}' \subset \partial \mathcal{M}$ is
 a relatively open set.  We may choose an open set $\mathcal{U} \subset \mathcal{M}$ such that $\mathcal{U}' = \mathcal{U} \cap \partial \mathcal{M}$ and such that
the nearest point projection $\pi_{\partial \mathcal{M}} \colon \mathcal{U} \to \partial \mathcal{M}$ with respect to $g$ is well-defined and smooth in $\mathcal{U}$.
Then the distance function $d$ is smooth as well. We define $x \colon \mathcal{U} \to \mathbb{R}^m$ by
\[
x(q) = (x'(\pi_{\partial M}(q)), d(q)), \quad q \in \mathcal{U}
\]
i.e., $x = (x' \circ \pi_{\partial \mathcal{M}}) \times d$. This gives rise to local coordinates $x$ on $\mathcal{U}$.
If we set $\mathscr{U} = x(\mathcal{U})$, then for $x' \in \mathbb{R}^{m - 1}$, the lines
$(\{x'\} \times \mathbb{R}) \cap \mathscr{U}$ correspond to geodesics in $\mathcal{U}$ intersecting the boundary
perpendicularly, and distances with respect to $g$ are preserved
by $x$ along these geodesics. If $(g_{ij})$ and $(h_{ij})$ denote the components of $g$ and $h$, respectively, in these
local coordinates, then
\begin{equation}\label{metricbdrylocal}
h_{ij}(x) = x_m^\alpha g_{ij}(x), \quad i, j = 1, \ldots, m.
\end{equation}
 	
Some of our arguments do not require anything more. However, we will sometimes need to take advantage of some additional
properties satisfied by these coordinates. For example, we will prove that a variant of the well-known monotonicity formula for harmonic maps near points of the boundary holds in a uniform way.

Note that the background metric $g$ induces a Riemannian metric $g'$ on $\partial \mathcal{M}$ by restriction (even though $h$ does not).
Consider a point $p \in \partial \mathcal{M}$ and suppose that $x' \colon \mathcal{U}' \to \mathbb{R}^{m - 1}$ describes \emph{normal} coordinates
on $\partial \mathcal{M}$ about $p$ with respect to $g'$. 
If we use the above construction to obtain local coordinates $x \colon \mathcal{U} \to \mathbb{R}^m$ with this choice
of $x'$, then we automatically find that
\[
g_{ij}(x_0) = \delta_{ij}.
\]
As everything is smooth, it follows that there exist $\rho_p > 0$ and $c_p > 0$ such that
\begin{equation}\label{coordmetricbounds}
|g_{ij}(x) - \delta_{ij}| \le c_p|x| \quad \text{and} \quad |\partial_k g_{ij}| \le c_p \quad \text{for } i, j, k = 1, \dotsc, m,
\end{equation}
as long as $|x- x_0| \le \rho_p$. Moreover, if $B_r^g(p)$ denotes the ball of radius $r$ about $p$ with respect to the
metric $g$, then for any $r < \rho_p$,
\begin{equation}\label{comparisonofballs}
B_{r - c_pr^2}^g(p) \subset \{q \in \mathcal{M}\colon |x(q)-x_0| < r\} \subset B_{r + c_pr^2}^g(p).
\end{equation}
This gives a useful comparison between balls with respect to $g$ on the one hand
and the preimages of Euclidean balls with respect to $x$ on the other hand.

The functions $p \mapsto \rho_p$ and $p \mapsto c_p$ may be chosen continuous. Hence for any compact
set $\mathcal{K} \subset \mathcal{U}$, there exist $\rho > 0$ and $c > 0$ such that $\rho_p \ge \rho$ and $c_p \le c$
for all $p \in \mathcal{K} \cap \partial \mathcal{M}$. The above inequalities are then uniform in $\mathcal{K}$.
Similarly, there exist $C_0, \dotsc, C_3$ such that the inequalities
\begin{equation}\label{metricchartbounds}
C_0\vert \xi\vert^2 \leq g_{ij}\xi_i\xi_j, \quad g^{ij}\xi_i\xi_j\leq C_1\vert \xi\vert^2,  \quad \vert g_{ij}\vert,\vert g^{ij}\vert \leq C_2,
\end{equation}
for every $\xi \in \mathbb{R}^m$, and
\begin{equation}\label{metricchartderivbounds}
\vert \partial_kg_{ij}\vert,\vert \partial_kg^{ij}\vert\leq C_3 
\end{equation}
for $k = 1,\ldots,m$, hold uniformly in $\mathcal{K}$ even if $x$ is not based on normal coordinates.

We are interested in the regularity of a class of critical points $v:\mathcal{M}\to\mathcal{N}$ of $E_{\beta}$  such that the energy is, at least locally, finite. We first observe that in the coordinates we have introduced on $\mathscr{U}$ the energy takes the form  
\begin{equation*}
E_{\beta}\vert_{\mathscr{U}}(v) := \frac{1}{2} \int_{\mathcal{U}} d^\beta \vert Dv\vert^2_{g}\mathrm{dvol}_{g} = \frac{1}{2}\int_{\mathscr{U}} x_m^{\beta} g^{ij} \langle \partial_i v, \partial_j v\rangle \sqrt{\det(g)} \mathrm{d}x.
\end{equation*}
In order to give meaning to the notion of critical points of $E_{\beta}$ we consider  Sobolev spaces on $\mathcal{M}$ such that this expression is, at least locally, finite. We therefore consider function spaces with respect to the weight $x_m^{\beta}$ for $\beta \in (-1,1)$. 
For open $\mathscr{U}\subset\mathbb{R}^m$ we denote $L^p_{\beta}(\mathscr{U};\mathbb{R}^n)$ to be the Banach space of
measurable functions $f \colon \mathscr{U} \to \mathbb{R}^n$ such that the norm $\vert \vert f \vert\vert_{L^p_{\beta}(\mathscr{U};\mathbb{R}^n)}:= \left(\int_{\mathscr{U}}\vert x_m\vert^{\beta}\vert f \vert^p \right)^{\frac{1}{p}}$ is finite. Then $W^{1,2}_{\beta}(\mathscr{U};\mathbb{R}^n)$ is the Banach Space of functions $f \in L_\beta^p(\mathscr{U}; \mathbb{R}^n)$ with weak derivatives also in $L^p_{\beta}(\mathscr{U};\mathbb{R}^n)$ and with norm  $\vert \vert f \vert\vert_{W^{1,p}_{\beta}(\mathscr{U};\mathbb{R}^n)}:=\left(\int_{\mathscr{U}}\vert x_m\vert^{\beta}\vert f \vert^p  + \int_{\mathscr{U}}\vert x_m\vert^{\beta}\vert Df \vert^p\right)^{\frac{1}{p}}$. When $p = 2$, the spaces $L^2_{\beta}$ and $W^{1,2}_{\beta}$ are Hilbert Spaces with inner products which induce the respective norms. We further define $W^{1,p}_{\beta,0}(\mathscr{U};\mathbb{R}^n)$ as the closure in $W^{1,p}_{\beta}(\mathscr{U};\mathbb{R}^n)$ of the space of smooth compactly supported functions $C_0^{\infty}(\mathscr{U};\mathbb{R}^n)$ on $\mathscr{U}$ with respect to   $\vert \vert  \cdot\vert\vert_{W^{1,p}_{\beta}(\mathscr{U};\mathbb{R}^n)}$. Since $\vert x_m\vert^{\beta}$ is an $A_2$ weight, as defined subsequently in  \eqref{Apcond}, the space of smooth functions in $W^{1,p}_{\beta}(\mathscr{U};\mathbb{R}^n)$ is dense in $W^{1,p}_{\beta}(\mathscr{U};\mathbb{R}^n)$ for $p\geq 2$, and even for $p >q\geq 1$ for some $2>q \geq 1$, but not necessarily all $p \in [1,\infty)$, see \cite{MR1774162} Corollary 2.1.6.  When $\beta = 0$, we omit the subscript in the preceding notation.

Since we are interested in manifold valued maps, we now introduce the function spaces we will use to analyse manifold valued critical points of $E_{\beta}$. We define $W^{1,2}_{\beta}(\mathscr{U};\mathcal{N})$ as the collection of maps in $W^{1,2}_{\beta}(\mathscr{U};\mathbb{R}^n)$ with values in $\mathcal{N}$ for Lebesgue almost every $x \in \mathscr{U}$. We may then define $W^{1,2}_{\beta,\text{loc}}(\mathcal{M};\mathcal{N})$
as maps such that $v \circ x^{-1} \in W_\beta^{1,2}(\mathscr{K};\mathcal{N})$ for any local coordinates $x \colon \mathcal{U} \to \mathscr{U}$
as above and any compact set $\mathscr{K} \subset \mathscr{U}$, as well as
$v \in W^{1,2}(\mathcal{K};\mathcal{N})$ for every compact set $\mathcal{K} \subset \mathrm{int}(\mathcal{M})$.

We will have occasion to consider a reflection of $v \in W^{1,2}_{\beta}(\mathscr{U};\mathbb{R}^n)$ and of the metric $g$
in the hyperplane $\mathbb{R}^{m-1}$. Let $\mathscr{U}_{-}:=\{x \in \mathbb{R}^m: (x',-x_m)\in \mathscr{U}, x_m<0\}  $ and define $\mathscr{V} = \mathscr{U} \cup \mathscr{U}_{-}$.
The even reflection of $v$ in the hyperplane $\mathbb{R}^{m-1}\times\{0\}$, which we denote by $\tilde{v}$, satisfies $\tilde{v}(x) = v(x',\vert x_m\vert)$, where $x = (x',x_m) \in \mathbb{R}^{m-1}\times\mathbb{R}$  and $\tilde{v} \in W^{1,2}_{\beta}(\mathscr{V};\mathbb{R}^n)$. Furthermore, $\partial_i\tilde{v}(x) = \partial_iv(x',\vert x_m\vert)$ for $i \neq m$ and  $\partial_m\tilde{v}(x) = \text{sgn}(x_m)\partial_mv(x',\vert x_m\vert)$.

We also extend $g$ to $\mathscr{V}$ by the obvious reflection, giving rise to $\tilde{g}$ on $\mathscr{V}$,
and note that $\tilde{g}$ also satisfies
\eqref{metricchartbounds} for the same constants in all of $\mathscr{V}$.
It is worthy of note that in the coordinates specified previously in this section,
the coefficients $g_{im}= g_{mi}$ satisfy $g_{im}(x',0) = 0$ for $i < m$ and hence so do the coefficients
$g^{im}(x',0) = g^{mi}(x',0)$ for $i \neq m$. This means that $\tilde{g}$ and $\tilde{g}^{-1}$ are continuous, and even Lipschitz continuous in compact domains where they are defined. We further note that the eigenvalues of ${\tilde{g}}^{-1}$ are given by $\lambda_{k}(x',\vert x_m\vert)$ where $\lambda_{k}$ are eigenvalues of $g^{-1}$.

\section{The First Variation of $E_{\beta}$}\label{firstvar}

In this section we specify the class of critical points we consider. We give the definitions in general but only calculate the Euler-Lagrange equations in a chart $\mathscr{U}$ as specified in Section \ref{metriccoord} since this will suffice for the subsequent analysis. 
 
Let $\pi_{\mathcal{N}}$ denote the nearest point projection on $\mathcal{N}$ and let  $v \in W^{1,2}_{loc}(\mathcal{M};\mathcal{N})$. 
For compact $\mathcal{K} \subset \mathcal{M}$ we define 
$E_{\beta}\vert_{\mathcal{K}}(v) = \frac{1}{2}\int_{\mathcal{K}}d^{\beta}\vert Dv\vert^2_{g}\mathrm{dvol}_{g}$. Observe that $\mathcal{K}$ may intersect the boundary of $\mathcal{M}$.  We first consider outer variations of $v$ given by $v_t:= \pi_{\mathcal{N}}(v+t\phi) \in W^{1,2}_{loc}(\mathcal{M};\mathcal{N})$ where $\phi \in C_0^{\infty}(\mathcal{M};\mathbb{R}^n)$.
We say that $ v \in W^{1,2}_{loc}(\mathcal{M};\mathcal{N})$ is \emph{weakly harmonic with respect to free boundary data}
if for every $\phi \in C_0^{\infty}(\mathcal{M};\mathbb{R}^n)$ we have $
\partial_{t}\vert_{t=0}E_{\beta}\vert_{\mathrm{supp} \phi}(v_t) = 0$.

Now let $\mathscr{U}$ be as in Section \ref{metriccoord} with the given coordinates $x \colon \mathcal{U} \to \mathscr{U}$.
For $\psi \in C_0^\infty(\mathscr{U}; \mathbb{R}^n)$, the function $\phi = \psi \circ x$ gives rise to
an admissible outer variation of the form $v_t := \pi_{\mathcal{N}}(v + t\psi)$.
Following the computations of e.g \cite{simon1996theorems} Chapter 2, we take the first derivative of $E_{\beta}$ at $t = 0$
to see that in the chosen coordinates critical points of $E_{\beta}$ with respect to outer variations satisfy 
\begin{align}\label{wELeqn}
0 &=  \int_{\mathscr{U}} x_m^{\beta} g^{ij}(\langle  \partial_i\psi   ,\partial_jv\rangle -\langle   A(v)(\partial_iv,\partial_jv ),\psi\rangle)\mathrm{dvol}_{g}  .
\end{align}
This is the weak form of the equation
\begin{equation}\label{sELeqn}
\partial_i \left(x_m^{\beta}\sqrt{\text{det}(g)}g^{ij}\partial_jv\right) + x_m^{\beta}\sqrt{\text{det}(g)}g^{ij}A(v)(\partial_iv,\partial_jv) = 0
\end{equation}
in $\mathscr{U}$ with Neumann boundary condition
\begin{equation}\label{strongneum}
 x_m^{\beta} g^{mj}\partial_jv = 0
\end{equation}
on $\mathscr{U}\cap(\mathbb{R}^{m-1}\times\{0\})$.
More concisely, equation \eqref{sELeqn} can be written in the form
\[
\mathrm{div}_g\left(x_m^{\beta}\mathrm{grad}_g v\right) + x_m^{\beta} \mathrm{tr}_g A(v)(Dv,Dv) = 0,
\]
where $\mathrm{div}_g$, $\mathrm{grad}_g$, and $\mathrm{tr}_g$ are the divergence, gradient, and trace with
respect to $g$, respectively.

The following observation will be useful when we choose certain test functions for \eqref{sELeqn}.

\begin{lem} \label{test_functions}
If \eqref{wELeqn} is satisfied for all $\psi \in C_0^\infty(\mathscr{U};\mathbb{R}^n)$, then it also holds true for
all $\psi \in W_\beta^{1,2}(\mathscr{U};\mathbb{R}^n) \cap L^\infty(\mathscr{U}; \mathbb{R}^n)$ compactly supported in $\mathscr{U}$.
\end{lem}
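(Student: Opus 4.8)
The plan is to establish the identity \eqref{wELeqn} for a general compactly supported $\psi \in W_\beta^{1,2}(\mathscr{U};\mathbb{R}^n) \cap L^\infty$ by approximating $\psi$ in a suitable sense by smooth compactly supported functions. First I would fix such a $\psi$, choose an open set $\mathscr{W}$ with $\operatorname{supp}\psi \subset \mathscr{W} \Subset \mathscr{U}$, and pick a cutoff $\eta \in C_0^\infty(\mathscr{U})$ with $\eta \equiv 1$ on a neighbourhood of $\operatorname{supp}\psi$. Since $|x_m|^\beta$ is an $A_2$ weight, smooth functions are dense in $W^{1,2}_\beta$; hence there is a sequence $\psi_k \in C^\infty$ with $\psi_k \to \psi$ in $W^{1,2}_\beta(\mathscr{U};\mathbb{R}^n)$. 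Replacing $\psi_k$ by $\eta \psi_k$ (which still converges, because multiplication by a fixed smooth compactly supported function is bounded on $W^{1,2}_\beta$) I obtain smooth \emph{compactly supported} approximants, still denoted $\psi_k$, with $\psi_k \to \psi$ in $W^{1,2}_\beta$ and all supported in the fixed compact set $\overline{\mathscr{W}'}$ where $\eta \equiv 1$.

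Next I would pass to the limit in \eqref{wELeqn} written for $\psi_k$. The term $\int_{\mathscr{U}} x_m^\beta g^{ij}\langle \partial_i \psi_k, \partial_j v\rangle \sqrt{\det g}\,\mathrm{d}x$ converges to the corresponding expression with $\psi$: indeed, by \eqref{metricchartbounds} the coefficient $x_m^\beta g^{ij}\sqrt{\det g}$ has modulus $\le C x_m^\beta$, so this is bounded by $C\|D\psi_k - D\psi\|_{L^2_\beta}\|Dv\|_{L^2_\beta(\mathscr{W}')} \to 0$, using $v \in W^{1,2}_\beta$ locally. For the curvature term $\int_{\mathscr{U}} x_m^\beta g^{ij}\langle A(v)(\partial_i v,\partial_j v),\psi_k\rangle \sqrt{\det g}\,\mathrm{d}x$ the integrand is dominated (again by \eqref{metricchartbounds} and boundedness of the second fundamental form $A$ on the compact manifold $\mathcal{N}$) by $C x_m^\beta |Dv|^2 |\psi_k|$; this is an $L^1$ function since $x_m^\beta|Dv|^2 \in L^1_{\mathrm{loc}}$ (as $v \in W^{1,2}_\beta$) and $\psi_k$ is bounded. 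Here the hypothesis $\psi \in L^\infty$ is essential. To conclude convergence of this term I would either extract a subsequence with $\psi_k \to \psi$ a.e.\ together with a uniform $L^\infty$ bound (noting that the $W^{1,2}_\beta$-limit can be realised with $\|\psi_k\|_{L^\infty} \le \|\psi\|_{L^\infty} + 1$, e.g.\ by truncating the smooth approximants at height $\|\psi\|_{L^\infty}+1$ — truncation is continuous on $W^{1,2}_\beta$ — or simply by mollification, which does not increase the sup norm), and then apply dominated convergence with majorant $C x_m^\beta |Dv|^2 (\|\psi\|_{L^\infty}+1)$.

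The main obstacle is the curvature term: unlike the linear term, it is genuinely nonlinear in $Dv$ and one only controls $x_m^\beta|Dv|^2$ in $L^1$, not in any better space, so convergence cannot come from a duality pairing but must be extracted via dominated convergence — which is exactly why one needs both the uniform $L^\infty$ bound on the approximants and an a.e.\ convergent subsequence simultaneously. The cleanest route is to take $\psi_k$ to be mollifications (restricted suitably and multiplied by the fixed cutoff $\eta$): mollification gives $\psi_k \to \psi$ in $W^{1,2}_\beta$ by the $A_2$ property, automatically satisfies $\|\psi_k\|_{L^\infty} \le \|\psi\|_{L^\infty}$, and, after passing to a subsequence, converges a.e. With these three properties in hand, dominated convergence disposes of both terms and \eqref{wELeqn} holds for $\psi$.
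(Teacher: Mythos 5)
Your proposal is correct and follows essentially the same route as the paper: both exploit density of smooth functions in $W^{1,2}_\beta$ via the $A_2$ property, use mollification to obtain approximants that are uniformly bounded in $L^\infty$ and converge a.e.\ along a subsequence, pass to the limit trivially in the linear term, and invoke the Dominated Convergence Theorem for the curvature term. Your extra remarks about the cutoff $\eta$ and truncation are unnecessary but harmless, since mollifying a compactly supported $\psi$ at small scale already produces compactly supported smooth approximants with $\|\psi_k\|_{L^\infty} \le \|\psi\|_{L^\infty}$.
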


\begin{proof}
It is well-known that $\vert x_m\vert^{\beta}$ is a Muckenhoupt weight of class $A_2$, see  \eqref{Apcond} for the definition.
By Corollary 2.1.6   in \cite{MR1774162} Chapter 2 this implies that $C_0^{\infty}(\mathscr{U};\mathbb{R}^n)$
is dense in $W^{1,2}_{\beta}(B_{\rho}(y);\mathbb{R}^n)$. Indeed, for
$\psi \in W_\beta^{1,2}(\mathscr{U};\mathbb{R}^n) \cap L^\infty(\mathscr{U}; \mathbb{R}^n)$ with compact support in
$\mathscr{U}$, a sequence
$(\psi_k)_{k \in \mathbb{N}}$ in $C_0^{\infty}(\mathscr{U};\mathbb{R}^n)$ converging to $\psi$ can be constructed by convolution
with a standard mollifying kernel. Then we also have $\psi_k \to \psi$ pointwise almost everywhere (possibly after passing to a subsequence)
and the sequence $(\psi_k)_{k \in \mathbb{N}}$  is bounded in $L^\infty(\mathscr{U}; \mathbb{R}^n)$.

Now it is clear that
\[
\int_{\mathscr{U}} x_m^{\beta} g^{ij}\langle  \partial_i\psi   ,\partial_jv\rangle \mathrm{dvol}_{g} = \lim_{k \to \infty} \int_{\mathscr{U}} x_m^{\beta} g^{ij}\langle  \partial_i\psi_k   ,\partial_jv\rangle \mathrm{dvol}_{g}.
\]
Furthermore, the convergence
\[
\int_{\mathscr{U}} x_m^{\beta} g^{ij}\langle   A(v)(\partial_iv,\partial_jv ),\psi\rangle\mathrm{dvol}_{g} = \lim_{k \to \infty} \int_{\mathscr{U}} x_m^{\beta} g^{ij}\langle   A(v)(\partial_iv,\partial_jv ),\psi_k\rangle\mathrm{dvol}_{g}
\]
follows from the Dominated Convergence Theorem. The statement of the lemma now follows immediately.
\end{proof}

We now consider inner variations of $E_{\beta}$.
Let $\mathcal{K} \subset \mathcal{M}$ be compact and let
$\Phi_{t} \colon \mathcal{M} \to \mathcal{M}$ constitute a smooth $1$-parameter family of diffeomorphisms
such that $\Phi_t(p) = p$ for all $p \in \mathcal{M} \setminus \mathcal{K}$ and all values of $t$.
Then $v_t := v \circ \Phi_t$ is an inner variation.
Note that $\mathcal{K}$ may include boundary points again.
We say that $v$ is \emph{weakly stationary harmonic with respect to free boundary data} if it is weakly harmonic with respect to free
boundary data and $\partial_t\vert_{t=0}E_{\beta}\vert_{\mathcal{K}}(v_t) = 0$ for all inner variations of the preceding form.

Consider again local coordinates $x \colon \mathcal{U} \to \mathscr{U}$ as in Section \ref{metriccoord}.
Let $\Psi_t(x) = x+ t\psi$ for a $\psi \in C_0^{\infty}(\mathscr{U};\mathbb{R}^m)$ with $\psi(x',0) \subset \mathbb{R}^{m-1}\times\{0\}$
for all $(x',0) \in \mathscr{U}$,  where $t$ is small enough to make $\Psi_t$ a diffeomorphism of $\mathscr{U}$.
We apply the above definition to $\Phi_t = x^{-1} \circ \Psi_t \circ x$.
An analysis of $\Psi_t$ similar to the standard theory for harmonic maps (see, e.g., \cite{simon1996theorems} again)
then shows that a critical point of $E_{\beta}$ with respect to inner variations satisfies
\begin{align}\label{innervar}
0 & = \int_{\mathscr{U}} x_m^{\beta} g^{ij} \left(\langle \partial_iv   ,\partial_{k}v   \rangle\partial_j\psi_{k} - \frac{1}{2}  \langle \partial_iv   ,\partial_jv  \rangle \mathrm{div}_g \psi\right) \mathrm{dvol}_g \nonumber\\
& \quad - \frac{1}{2} \int_{\mathscr{U}} x_m^{\beta} \partial_k g^{ij} \langle \partial_iv   ,\partial_jv  \rangle \mathrm{dvol}_g - \frac{\beta}{2} \int_{\mathscr{U}}\psi_m x_m^{\beta-1}g^{ij}\langle \partial_iv   ,\partial_jv  \rangle \mathrm{dvol}_g,
\end{align}
where again
\[
\mathrm{div}_g \psi = \frac{1}{\sqrt{\det(g)}} \partial_i\left(\sqrt{\det(g)} \psi_i\right)
\]
is the divergence of $\psi$ with respect to $g$.

If any of the criteria defining weakly or stationary harmonic maps with respect to free boundary data
are satisfied but with variations only supported away from the boundary, we refer to the maps in question as correspondingly weakly harmonic or weakly stationary harmonic, which is standard terminology.

\section{Monotonicity Formula}\label{mon}
Here we prove a monotonicity formula for half-balls centred on the boundary, similar to that for stationary harmonic maps, using arguments analogous to those of Gro\ss e-Brauckmann \cite{MR1184789}. Instead of using normal coordinates, we will use those discussed in Section \ref{metriccoord}. The second author proved a version of the following formula  in \cite{roberts2018regularity}, but the present situation is more general in view of the metric $g$.  Henceforth we define $B^+_R(x_0) = B_R(x_0)\cap (\mathbb{R}^{m-1}\times(0,\infty))$ where $B_R(x_0) = \{x \in \mathbb{R}^m: \vert x - x_0\vert <R\}$ is a Euclidean ball. We also write $\partial^+\Omega = \partial \Omega \cap (\mathbb{R}^{m-1}\times(0,\infty))$ for any $\Omega\subset\mathbb{R}^m$.

The following computations take place in local coordinates $x \colon \mathcal{U} \to \mathscr{U}$ as discussed in Section \ref{metriccoord}.
We now fix these local coordinates and work in $\mathscr{U}$, regarding $g$ as a Riemannian metric on $\mathscr{U}$. We also fix
a point $x_0 \in \mathscr{U} \cap (\mathbb{R}^{m - 1} \times \{0\})$ and assume that $g_{ij}(x_0) = \delta_{ij}$ and that
\eqref{metricchartbounds} and \eqref{metricchartderivbounds} are satisfied.
 
\begin{lem}\label{enmon}
There exists a constant $C>0$ such that the following holds. Suppose that $v \in W^{1,2}_{\beta}(B^+_{R}(x_0);\mathcal{N})$ satisfies \eqref{innervar} for all
$\psi \in C_0^{\infty}(\mathscr{U};\mathbb{R}^m)$ with $\psi(x',0) \subset\mathbb{R}^{m-1}\times\{0\}$ for every $(x',0) \in \mathscr{U} \cap(\mathbb{R}^{m-1}\times\{0\})$. Let $R > 0$ such that $B_{2R}^+(x_0) \subset \mathscr{U}$.  
  Then for every $0<t<r\leq R$ 
\begin{align*}
\lefteqn{2  \int_{ B^+_{r}(x_0)\backslash B^+_{t}(x_0) } x_m^{\beta} \vert x -x_0\vert^{2-m-\beta}e^{C\vert x - x_0\vert}(1+C\vert x - x_0\vert)^{-1}    \left\vert\frac{(x - x_0)_i}{\vert x-x_0\vert}\partial_iv\right\vert^2 \mathrm{dvol}_{g}} \qquad \nonumber\\
 &  \leq  r^{2-m-\beta}e^{Cr}  \int_{B^+_{r}(x_0)}  x_m^{\beta} \vert Dv\vert^2_g\mathrm{dvol}_{g}  - t^{2-m-\beta}e^{Ct}  \int_{B^+_{t}(x_0)}  x_m^{\beta} \vert Dv\vert^2_g\mathrm{dvol}_{g}.
\end{align*}
In particular the map $\rho \mapsto  \rho^{2-m-\beta}e^{C\rho}  \int_{B^+_{\rho}(x_0)}  x_m^{\beta}\vert Dv\vert^2_g\mathrm{dvol}_{g}$ is non-decreasing in $\rho$ for $0< \rho < R$.
\end{lem}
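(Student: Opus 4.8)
The plan is to test the inner variation identity \eqref{innervar} with radial vector fields based at $x_0$ and then let these degenerate to the characteristic function of a half-ball, which converts \eqref{innervar} into a differential inequality for the rescaled energy. First I would fix a non-increasing $\zeta \in C^\infty([0,\infty);[0,1])$ with $\zeta \equiv 1$ on $[0,\tfrac12]$ and $\zeta \equiv 0$ on $[1,\infty)$, write $\rho = |x-x_0|$, and for $\sigma \in (0,R)$ use $\psi(x) = \zeta(\rho/\sigma)(x-x_0)$. Since $\zeta$ is constant near $0$ this $\psi$ is smooth, it is compactly supported in $\mathscr{U}$, and $\psi_m(x',0) = 0$ because $(x-x_0)_m = x_m$ and $(x_0)_m = 0$; hence it is an admissible inner-variation field. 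Substituting it into \eqref{innervar}, using $\partial_j\psi_k = \zeta'(\rho/\sigma)\sigma^{-1}\rho^{-1}(x-x_0)_j(x-x_0)_k + \zeta(\rho/\sigma)\delta_{jk}$ and $\mathrm{div}_g\psi = \rho\zeta'(\rho/\sigma)\sigma^{-1} + m\zeta(\rho/\sigma) + \zeta(\rho/\sigma)(x-x_0)_i\partial_i\log\sqrt{\det g}$, I would expand $g^{ij}$, $\partial_k g^{ij}$, $\sqrt{\det g}$ about $x_0$: the standing assumptions $g_{ij}(x_0) = \delta_{ij}$, \eqref{metricchartbounds}, \eqref{metricchartderivbounds} give $g^{ij}(x-x_0)_j = (x-x_0)_i + O(\rho^2)$, $\sqrt{\det g} = 1 + O(\rho)$ and $(x-x_0)_i\partial_i\log\sqrt{\det g} = O(\rho)$. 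The crucial point is that the $\beta$-term of \eqref{innervar}, namely $-\tfrac\beta2 \int_{\mathscr{U}} \psi_m x_m^{\beta-1} g^{ij}\langle\partial_iv,\partial_jv\rangle\,\mathrm{dvol}_{g}$, reduces — because $\psi_m = \zeta(\rho/\sigma)\,x_m$ — to $-\tfrac\beta2\int x_m^\beta \zeta(\rho/\sigma) |Dv|_g^2\,\mathrm{dvol}_{g}$, so the singular weight is harmless.

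Writing $\partial_\rho v := \rho^{-1}(x-x_0)_k\partial_k v$, $I(\sigma) := \int x_m^\beta\zeta(\rho/\sigma)|Dv|_g^2\,\mathrm{dvol}_{g}$ and $J(\sigma) := \int x_m^\beta\zeta(\rho/\sigma)\langle\partial_\rho v,\partial_\rho v\rangle\,\mathrm{dvol}_{g}$ (so $I,J$ are smooth in $\sigma$ with $I',J' \ge 0$, as $\zeta' \le 0$), the terms of \eqref{innervar} carrying no derivative on $\zeta$ combine with coefficient $1 - \tfrac m2 - \tfrac\beta2 = -\tfrac{m+\beta-2}2$, while the $\zeta'$-terms contribute $\tfrac\sigma2 I'(\sigma) - \sigma J'(\sigma)$ by the identity $\partial_\sigma\zeta(\rho/\sigma) = -\rho\sigma^{-2}\zeta'(\rho/\sigma)$. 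Collecting everything, \eqref{innervar} becomes
\[
\tfrac{m+\beta-2}{2}\,I(\sigma) \;=\; \tfrac{\sigma}{2}\,I'(\sigma) \;-\; \sigma J'(\sigma) \;+\; \mathcal{E}(\sigma), \qquad |\mathcal{E}(\sigma)| \le C\sigma I(\sigma) + C\sigma^2 I'(\sigma),
\]
where $C$ depends only on the constants of \eqref{metricchartbounds}, \eqref{metricchartderivbounds}; the $C\sigma^2 I'$-part of the error comes from the $O(\rho^2)$-correction in the $\zeta'$-term (bounded using $\rho \le \sigma$ on the support of $\zeta'(\cdot/\sigma)$), the $C\sigma I$-part from the $\partial_k g^{ij}$ and $\partial_i\log\sqrt{\det g}$ contributions. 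Transferring the $C\sigma^2 I'$-error to the left and dividing by $1 + C\sigma$ yields, after enlarging $C$,
\[
\sigma I'(\sigma) \ge (m+\beta-2)\,I(\sigma) - C\sigma I(\sigma) + \tfrac{2\sigma}{1+C\sigma}\,J'(\sigma),
\]
which is exactly $\tfrac{d}{d\sigma}\!\bigl(\sigma^{2-m-\beta}e^{C\sigma}I(\sigma)\bigr) \ge \tfrac{2\sigma^{2-m-\beta}e^{C\sigma}}{1+C\sigma}\,J'(\sigma) \ge 0$. Here it is essential that $m+\beta-2 > 0$, which holds since $m \ge 3$ and $\beta \in (-1,1)$.

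Next I would integrate this inequality over $(t,r)$ with $0 < t < r \le R$, integrate by parts on the right to trade $J'$ for $J$, and let $\zeta \nearrow \mathbbm{1}_{[0,1)}$. By monotone convergence $I(\sigma) \to \int_{B_\sigma^+(x_0)} x_m^\beta|Dv|_g^2\,\mathrm{dvol}_{g} =: E(\sigma)$ and $J(\sigma) \to \int_{B_\sigma^+(x_0)} x_m^\beta\langle\partial_\rho v,\partial_\rho v\rangle\,\mathrm{dvol}_{g} =: D(\sigma)$, the function $D$ being absolutely continuous in $\sigma$; integrating by parts back and invoking the coarea formula for $x \mapsto |x-x_0|$ then rewrites the limiting right-hand side as $2\int_{B_r^+(x_0)\setminus B_t^+(x_0)} x_m^\beta |x-x_0|^{2-m-\beta} e^{C|x-x_0|}(1+C|x-x_0|)^{-1}\bigl|\tfrac{(x-x_0)_i}{|x-x_0|}\partial_iv\bigr|^2\,\mathrm{dvol}_{g}$. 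This is precisely the displayed inequality, and discarding its nonnegative left-hand side leaves $r^{2-m-\beta}e^{Cr}E(r) \ge t^{2-m-\beta}e^{Ct}E(t)$, i.e. the asserted monotonicity of $\rho \mapsto \rho^{2-m-\beta}e^{C\rho}\int_{B_\rho^+(x_0)} x_m^\beta|Dv|_g^2\,\mathrm{dvol}_{g}$.

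The main obstacle is the bookkeeping in the first two paragraphs: showing that every lower-order term produced by the inhomogeneity of $g$ (the pieces of $g^{ij} - \delta^{ij}$, of $\partial_k g^{ij}$, and of $\sqrt{\det g} - 1$) is uniformly $O(\rho)$-small relative to the leading radial and energy terms, so that it can be absorbed into an exponential factor $e^{C\rho}$ with $C$ independent of $x_0$; this is exactly what upgrades the classical Euclidean monotonicity (as in \cite{roberts2018regularity}) to the present metric setting, and the cancellation that makes the $C\sigma^2 I'$-error harmless — dividing through by $1+C\sigma$ — is what forces the precise weight $(1+C\rho)^{-1}$ in the statement. By contrast, the degenerate or singular weight $x_m^\beta$ causes no difficulty here, since the $m$-th component of the radial field is exactly $x_m$, which cancels the offending $x_m^{-1}$ and turns the weight term into a clean multiple of the energy density. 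The remaining steps — the passage to the limit $\zeta \nearrow \mathbbm{1}$ via monotone convergence and the absolute continuity of $D$, and the coarea rewriting — are routine.
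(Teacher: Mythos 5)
Your proposal is correct and yields the stated inequality; its overall strategy (test \eqref{innervar} with a radial field centred at $x_0$, exploit $\psi_m=\zeta(\rho/\sigma)\,x_m$ to tame the $x_m^{\beta-1}$ term, use $g_{ij}(x_0)=\delta_{ij}$ and the bounds \eqref{metricchartbounds}--\eqref{metricchartderivbounds} to produce $O(\rho)$ and $O(\rho^2)$ errors, absorb them into $e^{C\rho}$ and $(1+C\rho)^{-1}$, then integrate over $(t,r)$) is the same as the paper's. The technical route through the cutoff differs: the paper plugs in $\psi=x\,\eta_l$ with $\eta_l\to\mathbbm{1}_{B_\rho}$ for a \emph{fixed} $\rho$, obtaining the spherical-boundary identity \eqref{innervarprelimtestrearrangesubincutofflim} for a.e.\ $\rho$ and then a pointwise differential inequality in $\rho$; you keep a fixed smooth $\zeta$, obtain an honest ODE for the smooth functions $I(\sigma),J(\sigma)$, integrate in $\sigma$, and only then let $\zeta\nearrow\mathbbm{1}_{[0,1)}$. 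Your order of operations trades the a.e.\ differentiability/Lebesgue-point arguments of the paper for a monotone-convergence limit and an integration-by-parts back-and-forth at the very end, but the two routes are of comparable length and each verifies the same cancellations. One small point worth correcting: the sign of $m+\beta-2$ is not actually used at the step you flag. In $\tfrac{d}{d\sigma}\bigl(\sigma^{2-m-\beta}e^{C\sigma}I\bigr)=\sigma^{1-m-\beta}e^{C\sigma}\bigl[(2-m-\beta)I+C\sigma I+\sigma I'\bigr]$, the $(2-m-\beta)I$ term cancels exactly against the $(m+\beta-2)I$ term coming from the rearranged \eqref{innervar}, independently of sign; the positivity of $m+\beta-2$ (which does hold for $m\geq3$, $\beta\in(-1,1)$) is irrelevant to obtaining the monotonicity here. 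Also be a little careful when bounding the $O(\rho^2)$ error in the $\zeta'$ term by $C\sigma^2 I'$: you need both $\rho\le\sigma$ \emph{and} $\rho\gtrsim\sigma$ on $\mathrm{supp}\,\zeta'(\cdot/\sigma)$, the latter to convert $\zeta'/\sigma$ into $\rho\,\zeta'/\sigma^2$ up to a universal constant; your choice $\zeta\equiv1$ on $[0,\tfrac12]$ supplies this, but it should be said explicitly.
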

\begin{proof}
We may assume without loss of generality that $x_0 = 0$.
Let $\eta \in C_0^{\infty}(B_R(0);\mathbb{R})$ and define  $\psi \in C_0^{\infty}(B_R(0);\mathbb{R}^m)$ by $\psi(x) = x\eta(x)$. Then $\psi_m(x',0) = 0$ so $\psi(x',0) \subset  \mathbb{R}^{m-1}\times\{0\} $ for every $(x',0) \in  \mathbb{R}^{m-1}\times\{0\}$. We test \eqref{innervar} with $\psi$ and rearrange to see that 
\begin{align}\label{innervarprelimtestrearrange}
\lefteqn{(\beta + m -2)  \int_{B^+_R(0)}\eta x_m^{\beta} \vert Dv\vert^2_{g}\mathrm{dvol}_{g}} \qquad \nonumber\\
& =  2\int_{B^+_R(0)} x_m^{\beta} g^{ij}\langle \partial_iv   ,\partial_{k}v   \rangle x_k\partial_j\eta \mathrm{dvol}_{g}
- \int_{B^+_R(0)} x_m^{\beta} \vert Dv\vert^2_{g}x_k\partial_k\eta \mathrm{dvol}_{g}  \nonumber\\
& \quad -    \int_{B^+_R(0)} x_k\eta x_m^{\beta}  \left(   g^{ij}   \frac{\partial_k\text{det}(g)}{2\text{det}(g)}   +      \partial_k g^{ij}    \right) \langle \partial_iv   ,\partial_jv  \rangle \mathrm{dvol}_{g}.
\end{align}	

Now let $\chi$ be a smooth cutoff function with $\chi \equiv 1$ in $[1,\infty)$, $\chi \equiv 0$ in $(-\infty,0]$ and such that $0 \leq \chi \leq 1$ and $\vert\chi'\vert \leq 3$. Define $\eta_l(x) = \chi(l(\rho - \vert x \vert))$.   Replacing $\eta$ with $\eta_l$ in \eqref{innervarprelimtestrearrange} and letting  $l \to \infty$ using Lebesgues Dominated Convergence Theorem and Lebesgue's Differentiation Theorem together we see that 
\begin{align}\label{innervarprelimtestrearrangesubincutofflim}
\lefteqn{(\beta + m -2)  \int_{B^+_{\rho}(0)}  x_m^{\beta}\vert Dv\vert^2_{g}\mathrm{dvol}_{g}} \qquad \nonumber\\
& =  -2 \rho^{-1}\int_{\partial^+ B^+_{\rho}(0)} x_m^{\beta} g^{ij}\langle \partial_iv   ,\partial_{k}v   \rangle x_k x_j   \mathrm{d}{\mathcal{S}_g}(\rho) + \rho\int_{\partial^+ B^+_{\rho}(0)} x_m^{\beta} \vert Dv\vert^2_{g}    \mathrm{d}\mathcal{S}_{g}(\rho)   \nonumber\\
& \quad -    \int_{B^+_{\rho}(0)} x_k  x_m^{\beta}  \left(   g^{ij}   \frac{\partial_k\text{det}(g)}{2\text{det}(g)}   +      \partial_k g^{ij}    \right) \langle \partial_iv   ,\partial_jv  \rangle \mathrm{dvol}_g
\end{align}	
for almost every $\rho <R$, where $\mathrm{d}\mathcal{S}_g(\rho) = \sqrt{\det(g)} d\mathcal{H}^{m - 1}$ is the surface measure
with density $\sqrt{\det(g)}$ relative to the $(m - 1)$-dimensional Hausdorff measure. 

The function $\rho \mapsto \int_{B^+_{\rho}(0)}  x_m^{\beta}\vert Dv\vert^2_{g}\mathrm{dvol}_{g} $ is differentiable at Lebesgue-almost every $\rho \in (0,R)$ with derivative $\int_{\partial^+B^+_{\rho}(0)}  x_m^{\beta}\vert Dv\vert^2_{g}\mathrm{d}\mathcal{S}_g(\rho)$. Furthermore, by assumption $g^{ij}(0) = \delta_{ij}$ and thus we have that $\vert g^{ij}(x) x_j - x_i\vert \leq C\rho^2$.
It follows that for almost every $\rho<R$
 \begin{align}\label{innervarprelimtestrearrangesubincutofflimbound}
 & 2 \rho \int_{\partial^+ B^+_{\rho}(0)} x_m^{\beta}  \left\vert\frac{x_i}{\vert x \vert}\partial_iv\right\vert^2 \mathrm{d}\mathcal{S}_g(\rho)   \nonumber\\
 & =    (2-m-\beta)  \int_{B^+_{\rho}(0)}  x_m^{\beta}\vert Dv\vert^2_{g}\mathrm{dvol}_{g} + \rho\int_{\partial^+ B^+_{\rho}(0)} x_m^{\beta}\vert Dv\vert^2_{g} \mathrm{d}\mathcal{S}_g(\rho)   \nonumber\\
 & \quad -    \int_{B^+_{\rho}(0)} x_k  x_m^{\beta}  \left(   g^{ij}   \frac{\partial_k\det(g)}{2\det(g)}   +      \partial_k g^{ij}    \right) \langle \partial_iv   ,\partial_jv  \rangle \mathrm{dvol}_{g} \nonumber\\
 & \quad -     2 \rho^{-1}\int_{\partial^+ B^+_{\rho}(0)} x_m^{\beta}  \langle \partial_iv   ,\partial_{k}v   \rangle x_k (g^{ij}x_j - x_i) \mathrm{d}\mathcal{S}_g(\rho) 
 \nonumber\\
 & \leq    ( 2-m-\beta +C\rho) \int_{B^+_{\rho}(0)}  x_m^{\beta}\vert Dv\vert^2_{g}\mathrm{dvol}_{g} + (1+C\rho)\rho\int_{\partial^+ B^+_{\rho}(0)} x_m^{\beta} \vert Dv\vert^2_{g}  \mathrm{d}\mathcal{S}_g({\rho}) \nonumber\\
 & \leq  (1+C\rho)\left(( 2-m-\beta  +C\rho) \int_{B^+_{\rho}(0)}  x_m^{\beta}\vert Dv\vert^2_{g}\mathrm{dvol}_{g} +  \rho\int_{\partial^+ B^+_{\rho}(0)} x_m^{\beta} \vert Dv\vert^2_{g}  \mathrm{d}\mathcal{S}_g({\rho})\right)\nonumber\\
 & =  (1+C\rho) e^{-C\rho}\rho^{m+\beta-1}\partial_{\rho}\left( e^{C\rho}\rho^{2-m-\beta}  \int_{B^+_{\rho}(0)}  x_m^{\beta}\vert Dv\vert^2_{g}\mathrm{dvol}_{g} \right)
 \end{align}
 for almost every $\rho<R$, where $C$ is a suitable constant. 
 Now we multiply by $  (1+C\rho)^{-1} \rho^{1-m-\beta}e^{C\rho} $ and integrate between $0<t<r<R$ to conclude the proof. 
\end{proof}

In addition to the monotonicity formula from Lemma \ref{enmon}, we can make use of the monotonicity formula
derived by Gro{\ss}e-Brauckmann \cite{MR1184789} for balls in the interior of $\mathcal{M}$. As a consequence,
if we have control of the energy in some ball, then we have control of the energy in smaller balls contained therein, too.
We now briefly switch our focus to geodesic balls $B_r^g(p)$ in $\mathcal{M}$ with respect to $g$ in order to emphasise that the
corresponding inequalities are independent of the choice of local coordinates. To simplify the notation, we
introduce the function
\[
\omega(p, r) = \int_{B_r^g(p)} d^\beta \mathrm{dvol}_g, \quad p \in \mathcal{M}, \ r > 0.
\]

\begin{cor}\label{monotonicity_inequality}
Suppose that $\mathcal{K} \subset \mathcal{M}$ is compact. Then there exist two constants $C, R > 0$ such that
the following holds true. Let $v \in W_{\mathrm{loc}}^{1,2}(\mathcal{M};\mathcal{N})$ be a weakly stationary harmonic
map with respect to free boundary data. Then for any $p \in \mathcal{K}$ and for $0 < t \le r \le R$
\[
\frac{t^2}{\omega(p, t)} \int_{B_t^g(p)} d^\beta |Dv|_g^2 \mathrm{dvol}_g \le \frac{Cr^2}{\omega(p, r)} \int_{B_r^g(p)} d^\beta |Dv|_g^2 \mathrm{dvol}_g.
\]
\end{cor}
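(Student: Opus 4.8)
The plan is to derive Corollary \ref{monotonicity_inequality} from Lemma \ref{enmon} together with the interior monotonicity formula of Gro\ss e-Brauckmann, using the comparison of balls in \eqref{comparisonofballs} to switch between geodesic balls $B_r^g(p)$ and coordinate balls. First I would fix the compact set $\mathcal{K}$ and choose a finite cover of $\mathcal{K}$ by coordinate charts $x \colon \mathcal{U} \to \mathscr{U}$ of the type constructed in Section \ref{metriccoord}; by the uniformity remarks following \eqref{metricchartbounds}, one obtains uniform constants $\rho, c, C_0, \dots, C_3$ valid on $\mathcal{K}$. Fix $R > 0$ small enough (depending only on these constants) so that, for any $p \in \mathcal{K}$, the ball $B_{2R}^g(p)$ is contained in one of the charts and so that the inclusions \eqref{comparisonofballs} hold for all radii up to a fixed multiple of $R$.

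The argument then splits into two cases according to the location of $p$. \emph{Case 1: $p$ is close to $\partial\mathcal{M}$.} Here I would pass to the chart centred near $p$, and first note that it suffices to prove the inequality for $p \in \partial\mathcal{M}$ itself together with the interior formula, or alternatively to work directly with the half-ball formula centred at the foot point of $p$. In either description, Lemma \ref{enmon} gives that $\rho \mapsto \rho^{2-m-\beta} e^{C\rho} \int_{B_\rho^+(x_0)} x_m^\beta |Dv|_g^2 \, \mathrm{dvol}_g$ is non-decreasing. One converts this to the geodesic-ball, $\omega$-normalised quantity by observing that $\omega(p,r) = \int_{B_r^g(p)} d^\beta \, \mathrm{dvol}_g$ is comparable to $r^{m+\beta}$ up to a constant depending only on the uniform metric bounds and the fact that $x_m^\beta$ is an $A_2$ weight (a doubling estimate for $\omega$); combined with \eqref{comparisonofballs} this turns the monotone quantity into $\frac{r^2}{\omega(p,r)}\int_{B_r^g(p)} d^\beta |Dv|_g^2\,\mathrm{dvol}_g$ up to multiplicative constants, at the cost of the exponential factors $e^{C\rho}$, which are bounded between two positive constants for $\rho \le 2R$. \emph{Case 2: $p$ is bounded away from $\partial\mathcal{M}$}, so $B_R^g(p) \subset \mathrm{int}(\mathcal{M})$ and $d$ is bounded above and below on $B_R^g(p)$; then $d^\beta$ is comparable to a constant, $\omega(p,r)$ is comparable to $r^m$, and the claimed inequality follows directly from the classical interior monotonicity formula for weakly stationary harmonic maps applied to the (essentially unweighted) energy, again absorbing the comparison between $d^\beta$ and a constant into $C$.

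To patch the two cases I would fix a threshold $\delta > 0$ and declare $p$ "near the boundary" if $d(p) < \delta$ and "interior" if $d(p) \ge \delta$; for the interior regime one shrinks $R$ further so that $R < \delta/2$, and for the boundary regime one uses that $B_r^g(p) \subset B_{r + d(p)}^g(\pi_{\partial\mathcal{M}}(p))$ with $r + d(p) \le 2r + \delta$, reducing to a half-ball formula centred on $\partial\mathcal{M}$ at the expense of a bounded dilation of radii — which is harmless since the monotone quantity only changes by a bounded factor under such a dilation, thanks again to the doubling property of $\omega$ and of the energy integral implied by monotonicity. The main obstacle I anticipate is precisely this bookkeeping at intermediate distances $d(p) \sim r$, where neither the pure half-ball formula (which wants $p \in \partial\mathcal{M}$) nor the interior formula (which wants $B_r^g(p)$ away from the boundary) applies cleanly; the resolution is to always compare upward to a concentric-on-the-boundary half-ball and accept a loss of a universal constant, which is exactly what the statement of the corollary allows. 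The remaining steps — the doubling estimate for $\omega$, the equivalence of $\omega(p,r)$ with $r^{m+\beta}$ (resp.\ $r^m$), and the harmlessness of the factors $e^{C\rho}$ and $\sqrt{\det g}$ — are routine consequences of \eqref{metricchartbounds}, \eqref{metricchartderivbounds} and the $A_2$ property of $x_m^\beta$.
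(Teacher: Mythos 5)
There is a genuine gap in the patching step, and it sits exactly where you flagged a potential obstacle but then dismissed it. Your proposed fix — ``always compare upward to a concentric-on-the-boundary half-ball and accept a loss of a universal constant'' — does not yield a universal constant. The problematic regime is not $d(p)\sim r$ (where the dilation really is bounded) but $t \ll d(p) < r$, which is perfectly possible under your split $d(p)<\delta$. There the inclusion $B_t^g(p)\subset B_{t+d(p)}^g(\pi_{\partial\mathcal M}(p))$ dilates the radius by a factor $\sim d(p)/t$, which is unbounded as $t\to 0$ at fixed $p$. Concretely, $\omega(p,t)\sim t^m d(p)^\beta$ while the half-ball monotonicity quantity at $p'=\pi_{\partial\mathcal M}(p)$ normalises by $s^{m+\beta}$; when $t\ll d(p)$ these scalings differ by the unbounded factor $(d(p)/t)^{m-2}$, so Lemma \ref{enmon} alone cannot control $\frac{t^2}{\omega(p,t)}\int_{B_t^g(p)}$. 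The doubling of $\omega$ and of the energy that you invoke does not help here because it only controls comparable radii.

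The paper closes this gap with a three-scale chain rather than a single inclusion. When $d(p)<r/2$ it sets $s=d(p)/2$ and (i) passes from $t$ to $s$ using the \emph{interior} monotonicity formula of Gro{\ss}e-Brauckmann on balls $B_\rho^g(p)$ with $\rho\le s$, where the weight $d^\beta$ is essentially constant; (ii) then passes from radius $3s$ to $r/2$ using the boundary formula (Lemma \ref{enmon}) centred at $p'$; and (iii) handles $3t>s$ trivially. Moreover, in the regime $r/2\le d(p)\le R$ the paper does not apply the interior formula naively: it rescales the metric, $\tilde h(x)=r^{-\alpha}h(x_0+rx)$, so that $\tilde h$ and its derivatives are bounded uniformly in $p$, before invoking Gro{\ss}e-Brauckmann. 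This rescaling is needed because $h=d^\alpha g$ degenerates or blows up near $\partial\mathcal M$, so the constant in the unrescaled interior monotonicity formula would not be uniform as $d(p)\to 0$. Your proposal mentions Gro{\ss}e-Brauckmann's formula only for $p$ bounded away from the boundary, but the essential use is precisely in the transition regime. To repair the proposal, replace the single-inclusion comparison by the chain interior-from-$t$-to-$\sim d(p)$, then boundary-from-$\sim d(p)$-to-$r$, and add the rescaling step to make the interior constant uniform.
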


\begin{proof}
We  observe that $\omega(p, r)$ is of order $r^m$ if $p$ is far away from the boundary and of order $r^{m + \beta}$
for $p \in \partial \mathcal{M}$. Thus in both cases, the scaling factors in front of the integrals are consistent with
Gro{\ss}e-Brauckmann's monotonicity formula and Lemma \ref{enmon}, respectively. More generally, we can estimate
\[
\frac{1}{c} \omega(p, r) \le r^m (\max\{r, d(p)\})^\beta \le c\omega(p, r)
\]
throughout $\mathcal{K}$ for some constant $c$.

If $4t > r$, then the inequality from the lemma is immediate. Therefore, we assume that $4t \le r$. Choose $R$ so small that for every $p \in \mathcal{K} \cap \partial\mathcal{M}$, the ball $B_{4R}(p)$ is contained in
a coordinate chart as in Section \ref{metriccoord}.

We first consider $p \in \mathcal{K} \cap \partial\mathcal{M}$ (unless the set is empty). We
choose local coordinates $x \colon \mathcal{U} \to \mathscr{U}$ with the properties of Section \ref{metriccoord},
including \eqref{coordmetricbounds}, and set $x_0 = x(p) \in \mathscr{U}$. Then Lemma \ref{enmon} applies. Using the notation from the lemma, we conclude that
\[
t^{2 - m -\beta} \int_{B_t^+(x_0)} x_m^\beta |Dv|_g^2 \mathrm{dvol}_g \le C r^{2 - m -\beta} \int_{B_r^+(x_0)} x_m^\beta |Dv|_g^2 \mathrm{dvol}_g
\]
for some constant $C$ independent of $p$. Here we work on Euclidean balls depending on the choice of local
coordinates. However, according to the discussion in Section \ref{metriccoord}, in particular by \eqref{comparisonofballs},
if we choose $R$ sufficiently small, then
\[
B_{s/2}^g(p) \subset B_s^+(x_0) \subset B_{2s}^g(p)
\]
for any $s \le R$. Then
\[
\begin{split}
t^{2 - m -\beta} \int_{B_t^g(x_0)} x_m^\beta |Dv|_g^2 \mathrm{dvol}_g & \le t^{2 - m -\beta} \int_{B_{2t}^+(x_0)} x_m^\beta |Dv|_g^2 \mathrm{dvol}_g \\
& \le C \left(\frac{r}{2}\right)^{2 - m -\beta} \int_{B_{r/2}^+(x_0)} x_m^\beta |Dv|_g^2 \mathrm{dvol}_g \\
& \le C2^{m + \beta - 2}r^{2 - m -\beta} \int_{B_r^g(x_0)} x_m^\beta |Dv|_g^2 \mathrm{dvol}_g
\end{split}
\]
as required.

If $p \in \mathcal{K}$ satisfies $r/2 \le d(p) \le R$, then we can still work in local coordinates as above, but constructed
about a point $p' \in \partial \mathcal{M}$. We set $x_0 = x(p)$ again, which is now in the interior of $\mathscr{U}$.
We consider the Riemannian metric $\tilde{h}(x) = r^{-\alpha} h(x_0 + rx)$. Then $\tilde{h}$ and all of its derivatives
are uniformly bounded in a Euclidean ball $B_{r_0}(0)$ for a fixed radius $r_0 > 0$ that is independent of $p$.
The map $x \mapsto v(x_0 + rx)$ is weakly stationary harmonic with respect to the metric $\tilde{h}$, and we can
use Gro{\ss}e-Brauckmann's monotonicity formula in $B_{r/4}^g(p)$ to derive the estimate
\[
\begin{split}
\frac{t^2}{\omega(p, t)} \int_{B_t^g(p)} d^\beta |Dv|_g^2 \mathrm{dvol}_g & \le \frac{C r^2}{\omega(p, r/4)} \int_{B_{r/4}^g(p)} d^\beta |Dv|_g^2 \mathrm{dvol}_g \\
& \le \frac{C r^2}{\omega(p, r)} \int_{B_r^g(p)} d^\beta |Dv|_g^2 \mathrm{dvol}_g.
\end{split}
\]

For points with $d(p) \ge R$, the behaviour of the metric near the boundary becomes completely irrelevant. In this
case, we can use the usual arguments for harmonic maps.

We hence consider the case when $d(p) < r/2$. Here we choose $p' \in \partial{M}$
with $\mathrm{dist}(p, p') = d(p)$ and we use the inequalities that we already know. Set $s = d(p)/2$. If
$3t \le s$, we conclude that
\[
\begin{split}
\frac{t^2}{\omega(p, t)} \int_{B_t^g(p)} d^\beta |Dv|_g^2 \mathrm{dvol}_g & \le \frac{C s^2}{\omega(p, s)} \int_{B_s^g(p)} d^\beta |Dv|_g^2 \mathrm{dvol}_g \\
& \le \frac{C (3 s)^2}{\omega(p, 3s)} \int_{B_{3s}^g(p')} d^\beta |Dv|_g^2 \mathrm{dvol}_g \\
& \le  \frac{C (r/2)^2}{\omega(p, r/2)} \int_{B_{r/2}^g(p')} d^\beta |Dv|_g^2 \mathrm{dvol}_g \\
& \le \frac{C r^2}{\omega(p, r)} \int_{B_r^g(p)} d^\beta |Dv|_g^2 \mathrm{dvol}_g.
\end{split}
\]

If $3t > s$, then an inequality similar to the first line holds trivially and we can still use the rest of the argument.
\end{proof}

When we work with local coordinates, the following version of Corollary \ref{monotonicity_inequality} is
convenient. Here we fix the same local coordinates as at the beginning of this section again, and we regard
$g$ as a Riemannian metric on the domain $\mathscr{U}$.

\begin{cor} \label{monotonicity_inequality_Euclidean}
There exists a constant $C>0$ with the following property. Let $R > 0$ such that $B_{2R}^+(x_0) \subset \mathscr{U}$. 
Suppose that $v \in W^{1,2}_{\beta}(B^+_{R}(x_0);\mathcal{N})$ is a weakly stationary harmonic map with respect
to free boundary data. Let $y \in B_{R/2}(x_0)$ and $r > 0$ such that $B_r(y) \subset B_R(x_0)$.
If $y_m \le r$, then
\[
r^{2 - m - \beta} \int_{B_r(y) \cap \mathscr{U}} x_m^\beta |Dv|_g^2 \mathrm{dvol}_g \le CR^{2 - m - \beta} \int_{B_R^+(x_0)} x_m^\beta |Dv|_g^2 \mathrm{dvol}_g.
\]
If $y_m \ge r$, then
\[
r^{2 - m} y_m^{-\beta} \int_{B_r(y) \cap \mathscr{U}} x_m^\beta |Dv|_g^2 \mathrm{dvol}_g \le CR^{2 - m - \beta} \int_{B_R^+(x_0)} x_m^\beta |Dv|_g^2 \mathrm{dvol}_g.
\]
\end{cor}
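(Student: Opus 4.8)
The plan is to deduce both inequalities from Corollary \ref{monotonicity_inequality}, which is a statement about $g$-geodesic balls and is therefore insensitive to the choice of chart. We first note that the proof of Corollary \ref{monotonicity_inequality} uses only Lemma \ref{enmon} and the interior monotonicity formula, together with the comparison \eqref{comparisonofballs} between $g$-geodesic balls and coordinate balls; all of this is local, so the conclusion of that corollary is available for the present $v$ even though $v$ is defined only on $B_R^+(x_0)$, as long as one restricts to $g$-balls lying well inside $B_R^+(x_0)$, and the constants depend only on $m$, $\beta$ and the fixed bounds \eqref{metricchartbounds}--\eqref{metricchartderivbounds}. We may take $x_0 = 0$. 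By the construction of the coordinates in Section \ref{metriccoord}, the last coordinate equals the distance to the boundary, so $d = x_m$ on $\mathscr{U}$; hence, writing $p = x^{-1}(y)$, we have $\mathrm{dist}(p, \partial\mathcal{M}) = y_m$, and moreover $y_m \le |y - x_0| < R/2$. If $r \ge \delta_0 R$ for a suitably small universal $\delta_0 > 0$, then both inequalities are immediate from the inclusion $B_r(y) \cap \mathscr{U} \subset B_R^+(x_0)$ together with $2 - m - \beta < 0$ (and, in the second case, with $\delta_0 R \le y_m < R/2$); so we assume from now on that $r < \delta_0 R$.

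The core of the argument is a dictionary between coordinate balls and $g$-geodesic balls. Since \eqref{metricchartbounds} holds with fixed constants, $g$ is uniformly comparable to the Euclidean metric, so there are universal constants $0 < c_1 < c_2$ with
\[
\{x \in \mathscr{U} : |x - z| < c_1 \rho\} \subset B_\rho^g(x^{-1}(z)) \subset \{x \in \mathscr{U} : |x - z| < c_2 \rho\}
\]
for all $z \in B_{R/2}(x_0)$ and all sufficiently small $\rho$; moreover $\sqrt{\det g}$ is bounded above and below, and since $\beta \in (-1, 1)$ an elementary computation gives
\[
\frac{1}{C} \rho^m \big(\max\{\rho, y_m\}\big)^\beta \le \omega(p, \rho) \le C \rho^m \big(\max\{\rho, y_m\}\big)^\beta
\]
for small $\rho$. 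Using these two facts one picks (i) an inner radius $t$ comparable to $r$ (e.g.\ $t = r/c_1$) so that $B_r(y) \cap \mathscr{U} \subset B_t^g(p)$, and notes that $t^2/\omega(p, t)$ is then comparable to $r^{2 - m - \beta}$ when $y_m \le r$ and to $r^{2 - m} y_m^{-\beta}$ when $y_m \ge r$; and (ii) an outer radius $\rho_0 = cR$, with $c$ a small universal constant (also chosen to ensure $t \le \rho_0$), so that $B_{\rho_0}^g(p) \subset \{x : |x - x_0| < R\} \cap \mathscr{U} = B_R^+(x_0)$ up to a null set and $\rho_0^2/\omega(p, \rho_0) \le C R^{2 - m - \beta}$.

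Plugging the radii $t \le \rho_0$ from (i)--(ii) into Corollary \ref{monotonicity_inequality}, bounding the left-hand side from below via $B_r(y) \cap \mathscr{U} \subset B_t^g(p)$ and the right-hand side from above via $B_{\rho_0}^g(p) \subset B_R^+(x_0)$, and recalling $d = x_m$, one obtains in the case $y_m \le r$
\begin{align*}
r^{2 - m - \beta} \int_{B_r(y) \cap \mathscr{U}} x_m^\beta |Dv|_g^2 \, \mathrm{dvol}_g
&\le C \frac{t^2}{\omega(p,t)} \int_{B_t^g(p)} d^\beta |Dv|_g^2 \, \mathrm{dvol}_g \\
&\le C \frac{\rho_0^2}{\omega(p,\rho_0)} \int_{B_{\rho_0}^g(p)} d^\beta |Dv|_g^2 \, \mathrm{dvol}_g \\
&\le C \frac{\rho_0^2}{\omega(p,\rho_0)} \int_{B_R^+(x_0)} x_m^\beta |Dv|_g^2 \, \mathrm{dvol}_g \\
&\le C R^{2 - m - \beta} \int_{B_R^+(x_0)} x_m^\beta |Dv|_g^2 \, \mathrm{dvol}_g,
\end{align*}
which is the first inequality; the second follows in exactly the same way, with the prefactor $r^{2 - m - \beta}$ on the far left replaced by $r^{2 - m} y_m^{-\beta}$. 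All constants depend only on $m$, $\beta$ and the fixed bounds \eqref{metricchartbounds}--\eqref{metricchartderivbounds}, hence are universal. The only step that requires genuine care is (ii) in the regime $r \le y_m$: there $\rho_0$ may be smaller than $y_m$, so that $\omega(p, \rho_0)$ is of order $\rho_0^m y_m^\beta$ rather than $\rho_0^{m + \beta}$, and one must use $y_m < R/2$ (together with $2 - m < 0$) to conclude that $\rho_0^2/\omega(p, \rho_0)$ is still bounded by $C R^{2 - m - \beta}$.
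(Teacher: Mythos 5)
Your argument is correct and takes the same route the paper indicates: the paper's proof is a one-line pointer to Corollary \ref{monotonicity_inequality} plus the comparability of Euclidean and $g$-geodesic balls from \eqref{comparisonofballs}, and you have simply spelled out that dictionary (choosing $t\asymp r$, $\rho_0\asymp R$, matching $t^2/\omega(p,t)$ to the stated prefactors via $\omega(p,\rho)\asymp \rho^m(\max\{\rho,y_m\})^\beta$, and disposing of the easy regime $r\gtrsim R$). The details you fill in — including the care needed when $\rho_0 < y_m$ so that $\omega(p,\rho_0)\asymp\rho_0^m y_m^\beta$, handled using $y_m<R/2$ — are accurate.
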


\begin{proof}
This follows from Corollary \ref{monotonicity_inequality} and the fact that for the given local coordinates,
Euclidean balls are comparable to geodesic balls with inclusions similar to \eqref{comparisonofballs}.
\end{proof}

Using these monotonicity inequalities we can deduce an estimate in the space of bounded mean oscillation for a stationary weakly harmonic map in terms of the re-scaled energy. We recall here that for open $\Omega \subset \mathbb{R}^m$, the mean oscillation of a map $v:\Omega \to \mathbb{R}^n$ in $\Omega$ is 
\begin{equation*}
[v]_{BMO(\Omega)} = \sup_{B\subset \Omega} \frac{1}{\vert B\vert}\int_{B}\vert v - \overline{v}_B\vert \mathrm{d}x
\end{equation*}
where $B\subset \Omega$ is any Euclidean ball, $\vert B\vert = \int_{B}\mathrm{d}x$ and $\overline{v}_{B} = \frac{1}{\vert B\vert}\int_{B}v\mathrm{d}x$. Then  $BMO(\Omega) := \{v\in L^1_{loc}(\Omega):[v]_{BMO(\Omega)}<\infty \}$.
We also introduce the notation $\vert\Omega\vert_{\beta} = \int_{\Omega}\vert x_m\vert^{\beta}\mathrm{d}x$ and $\overline{v}_{\Omega,\beta} = \frac{1}{\vert \Omega\vert_{\beta}}\int_{\Omega}\vert x_m\vert^{\beta}v\mathrm{d}x$.

In the following lemma, in addition to the map $v$ on $\mathscr{U}$, we also consider the even reflection $\tilde{v}$ on
$\mathscr{V} = \mathscr{U} \cup \mathscr{U}_-$ as defined  in Section \ref{metriccoord}.

\begin{lem}\label{statharmbmo}
There exists a constant $C>0$ with the following property. Let $R > 0$ such that $B_{2R}(x_0) \subset \mathscr{V}$.  
Suppose that $v \in W^{1,2}_{\beta}(B^+_{R}(x_0);\mathcal{N})$ is a weakly stationary harmonic map with respect to
free boundary data. Then
\begin{equation}\label{statlocalbmoboundreflec}
 	[\tilde{v}]_{BMO(B_\frac{R}{2}(x_0))}\leq C\left( R^{2-m-\beta}\int_{B^+_R(x_0)}x_m^{\beta}\vert Dv\vert^2_{g} \mathrm{dvol}_{g}\right)^{\frac{1}{2}}.
 	\end{equation}
\end{lem}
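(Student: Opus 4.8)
The plan is to reduce the BMO estimate for the reflected map $\tilde v$ to the energy monotonicity already established in Corollary \ref{monotonicity_inequality_Euclidean}, via a Poincar\'e-type inequality in the weighted spaces. The key point is that controlling $[\tilde v]_{BMO(B_{R/2}(x_0))}$ amounts to bounding $\frac{1}{|B|}\int_B |\tilde v - \overline{\tilde v}_B|\,\mathrm dx$ uniformly over all Euclidean balls $B \subset B_{R/2}(x_0)$, and by H\"older's inequality and the fact that the unweighted Lebesgue measure and the weighted measure $|x_m|^\beta\,\mathrm dx$ are mutually comparable at the scale of such a ball (the latter being an $A_2$ weight, with doubling constants uniform on compacta), it suffices to bound the weighted average oscillation $\frac{1}{|B|_\beta}\int_B |x_m|^\beta |\tilde v - \overline{\tilde v}_{B,\beta}|\,\mathrm dx$.

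\textbf{Key steps.} First I would distinguish two cases for a ball $B = B_\rho(y) \subset B_{R/2}(x_0)$ according to whether $B$ is ``close to'' the hyperplane $\{x_m = 0\}$ (say $|y_m| \le \rho$, so $B \subset B_{3\rho}(y')$ for the projected centre $y' = (y',0)$, up to harmless constants) or ``far'' from it ($|y_m| > \rho$, so $B$ lies within $\mathscr U$ or entirely within $\mathscr U_-$ and the weight is comparable to the constant $|y_m|^\beta$ on $B$). In the far case, $\tilde v$ restricted to $B$ is (a reflected copy of) $v$ on a ball not meeting the boundary, the weight is essentially constant, and the ordinary Poincar\'e inequality gives $\frac{1}{|B|}\int_B |\tilde v - \overline{\tilde v}_B|\,\mathrm dx \le C\rho\left(\fint_B |D\tilde v|^2\,\mathrm dx\right)^{1/2}$, which after reinstating the weight and the metric $g$ (uniformly elliptic by \eqref{metricchartbounds}) is bounded by $C\big(\rho^{2-m}y_m^{-\beta}\int_{B}|x_m|^\beta |D v|_g^2\,\mathrm{dvol}_g\big)^{1/2}$; the second inequality of Corollary \ref{monotonicity_inequality_Euclidean} (applied to $v$, or its reflection, noting $D\tilde v$ and $Dv$ have the same modulus) then dominates this by the right-hand side of \eqref{statlocalbmoboundreflec}. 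In the close case, I would use the weighted Sobolev--Poincar\'e inequality valid for the $A_2$ weight $|x_m|^\beta$ on the ball $B_{3\rho}(y')$ centred on the hyperplane (available by the density and embedding results cited in Section \ref{metriccoord}, or the standard Muckenhoupt theory), to get $\frac{1}{|B|_\beta}\int_{B}|x_m|^\beta|\tilde v - \overline{\tilde v}_{B,\beta}|\,\mathrm dx \le C\rho\big(\rho^{-m-\beta}\int_{B_{3\rho}(y')}|x_m|^\beta|D\tilde v|^2\,\mathrm dx\big)^{1/2}$; using symmetry of $\tilde v$ to fold the integral back onto the upper half-ball, passing to $|D v|_g^2$ via uniform ellipticity, and invoking the first inequality of Corollary \ref{monotonicity_inequality_Euclidean} with $r \sim \rho$, $y = y'$ again yields the bound by $C\big(R^{2-m-\beta}\int_{B_R^+(x_0)}x_m^\beta|Dv|_g^2\,\mathrm{dvol}_g\big)^{1/2}$. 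Taking the supremum over all $B \subset B_{R/2}(x_0)$ completes the proof.

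\textbf{Main obstacle.} The delicate part is the close-to-boundary case: one must ensure the weighted Poincar\'e inequality is applied on a ball genuinely centred on (or straddling) the hyperplane, where the $A_2$ constant of $|x_m|^\beta$ is controlled, rather than on an arbitrary off-centre ball where the weight can be wildly non-doubling relative to its own average; this is why the geometric reduction $B_\rho(y) \subset B_{3\rho}(y')$ and the reflection symmetry of $\tilde v$ are both essential, the former to centre the Poincar\'e ball correctly and the latter to control $\overline{\tilde v}_{B,\beta}$ (the weighted mean over the off-centre ball) by the weighted mean over the symmetric ball $B_{3\rho}(y')$. Matching the radii and scaling factors so that Corollary \ref{monotonicity_inequality_Euclidean} applies with a uniform constant (and checking the hypothesis $B_r(y) \subset B_R(x_0)$ survives the inflation to $B_{3\rho}(y')$, which it does since $\rho \le R/2$ and we only ever need $B_{3\rho}(y') \subset B_{R}(x_0)$ after possibly shrinking to, say, $B_{R/3}(x_0)$ at the outset) is then a routine but careful bookkeeping exercise. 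I would also remark that one may first reduce to balls $B$ with, say, $\rho \le R/8$, since for the finitely-many remaining scales the estimate is immediate from monotonicity applied at scale $R$.
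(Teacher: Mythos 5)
Your argument has the same overall skeleton as the paper's — split balls $B_\rho(y)\subset B_{R/2}(x_0)$ into a far case $y_m\ge\rho$ and a near case $y_m<\rho$, handle the far case by the unweighted Poincar\'e inequality together with the second alternative of Corollary~\ref{monotonicity_inequality_Euclidean}, and handle the near case using the first alternative — but your treatment of the near case takes a genuinely different route. You pass first from the unweighted BMO oscillation to a weighted oscillation and then apply the weighted Sobolev--Poincar\'e inequality of Lemma~\ref{weightedsobolevpoinc}. The paper instead applies the plain unweighted $W^{1,1}$ Poincar\'e inequality to $\tilde v$ directly, obtaining $\frac1{|B_\rho(y)|}\int_{B_\rho(y)}|\tilde v-\overline{\tilde v}_{B_\rho(y)}|\,dx\le C\rho^{1-m}\int_{B_\rho(y)}|D\tilde v|\,dx$, and only afterwards uses Cauchy--Schwarz to introduce the weight on the gradient term, combined with the elementary estimate $\int_{B_\rho(y)}|x_m|^{-\beta}\,dx\le C\rho^{m-\beta}$ valid when $y_m\le\rho$. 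The paper's ordering avoids invoking the weighted Poincar\'e inequality altogether, which is a small economy but in substance both routes deliver the same bound.

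There is, however, an imprecision in your reduction step that should be repaired. You claim that Lebesgue measure and $|x_m|^\beta\,dx$ are ``mutually comparable at the scale of such a ball'' and infer that it suffices to bound the weighted $L^1$ oscillation $\frac1{|B|_\beta}\int_B|x_m|^\beta|\tilde v-\overline{\tilde v}_{B,\beta}|\,dx$. On a ball meeting $\{x_m=0\}$ the pointwise densities are not comparable — for $\beta>0$ the weight degenerates at the hyperplane — and only the total masses $|B|$ and $|B|_\beta$ agree up to a factor of order $\rho^\beta$. Consequently the unweighted $L^1$ oscillation is not dominated by the weighted $L^1$ oscillation. The fix, which is presumably what you intend by ``H\"older's inequality'', is to pass directly to the weighted $L^2$ oscillation via Cauchy--Schwarz with the split $|x_m|^{\pm\beta/2}$, namely $\frac1{|B|}\int_B|\tilde v-c|\,dx\le C\big(\rho^{-m-\beta}\int_B|x_m|^\beta|\tilde v-c|^2\,dx\big)^{1/2}$, and then apply the weighted Poincar\'e inequality at that level; this works for every $\beta\in(-1,1)$. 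One further simplification: your concern about controlling $\overline{\tilde v}_{B,\beta}$ by the mean over the enlarged symmetric ball is unnecessary, since the BMO seminorm permits comparison with any constant, $\frac1{|B|}\int_B|\tilde v-\overline{\tilde v}_B|\,dx\le\frac2{|B|}\int_B|\tilde v-c|\,dx$ for all $c\in\mathbb R^n$, so the reflection symmetry is not needed to pin down the constant.
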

\begin{proof}
We let  $B_r(y) \subset B_{R/2}(x_0)$ with centre $y = (y',y_m)\in\mathbb{R}^{m-1}\times\mathbb{R}$.
We may assume without loss of generality that $y_m \geq 0$.

If $y_m \ge r$, then the Poincar\'e inequality, combined with Corollary \ref{monotonicity_inequality_Euclidean},
implies that
\[
\frac{1}{\vert B_r(y)\vert} \int_{B_r(y)}\vert v - \overline{v}_{B_r(y)}\vert \mathrm{d}x \le C \left( R^{2-m-\beta}\int_{B^+_R(x_0)}x_m^{\beta}\vert Dv\vert^2_{g} \mathrm{dvol}_{g}\right)^{\frac{1}{2}}.
\]
  For the case $y_m < r$, we need to replace the usual Poincar\'e inequality
with something else. Here we note instead that 	
\begin{align*}
\frac{1}{\vert B_r(y)\vert} \int_{B_r(y)}\vert \tilde{v} - \overline{\tilde{v}}_{B_r(y)}\vert \mathrm{d}x  & \leq C r^{1-m}    \int_{B_r(y_0)}\vert D\tilde{v}\vert \mathrm{d}x\nonumber\\
 	& \leq Cr^{1-m}     \left(\int_{B_r(y_0)}|x_m|^{-\beta} \mathrm{d}x\right)^{\frac{1}{2}} \left(\int_{B_r(y_0)}|x_m|^{\beta}\vert D\tilde{v}\vert^2 \mathrm{d}x\right)^{\frac{1}{2}}\nonumber\\
 	& \le   C   \left(r^{2-m-\beta}   \int_{B_r(y_0)}x_m^{\beta}\vert D\tilde{v}\vert^2 \mathrm{d}x\right)^{\frac{1}{2}}.
\end{align*}
We apply Corollary \ref{monotonicity_inequality_Euclidean} to complete the proof.
\end{proof}

\section{Control of the $L^2$-norm}\label{l2}

Here, and for most of the rest of the paper, we work in local coordinates as in Section \ref{metriccoord}
again. We establish control of the quantity
\[
\int_{B_r^+(x_0)} x_m^\beta |v - c|^2 \, dx,
\]
for certain constants $c$ and suitable radii $r$, for a solution $v$ to the Euler-Lagrange equations for $E_{\beta}$ in terms of
the energy in a larger ball, provided the latter energy is assumed small. The control we obtain will allow us to control an $L^2$ term which appears in the Caccioppoli type inequality in Section \ref{caccdecay} provided the energy is sufficiently small.  We establish this control for general target manifolds using a blow-up argument.

First we prove an auxiliary estimate on the energy decay for solutions to the associated linear equations with homogeneous Neumann-type boundary conditions. 
\begin{lem}\label{lindecay}
Let $\mathscr{U} \subset \mathbb{R}^{m-1}\times[0,\infty)$ be relatively open. Let $G = (G^{ij}) \colon \mathscr{U} \to \mathbb{R}^{m \times m}$
be a continuous matrix valued function such that $G(x)$ is symmetric and positive definite in $\mathscr{U}$.
Suppose further that $G^{im}(x', 0) = G^{mi}(x', 0) = 0$ for all $(x, 0) \in \mathscr{U} \cap (\mathbb{R}^{m - 1} \times \{0\})$.
Then for any compact set $\mathscr{K} \subset \mathscr{U}$ there exist constants $C, C', \gamma > 0$ with the following properties.
Let $y \in (\mathbb{R}^{m-1}\times\{0\})$ and $\rho > 0$ such that $B_\rho^+(y) \subset \mathscr{K}$.
Suppose $v \in W^{1,2}_{\beta}(\mathscr{U};\mathbb{R}^n)$ satisfies 
\begin{equation}\label{linweaksol}
0 = \int_{\mathscr{U}}  x_m^{\beta}  G^{ij}\langle \partial_iv, \partial_j\psi  \rangle\mathrm{d}x
\end{equation}
for every $\psi \in C_0^{\infty}(\mathscr{U};\mathbb{R}^n)$. Then there exists $\lambda \in \mathbb{R}^n$ such that
\begin{equation}\label{lincaccineq}
\int_{B^+_{\rho/2}(y)} x_m^{\beta} |Dv|^2 \mathrm{d}x\leq \frac{C}{\rho^2}\int_{B^+_{\rho}(y)}x_m^{\beta}\vert v - \lambda\vert^2\mathrm{d}x \le C' \rho^{m + \beta - 2 + 2\gamma}.
\end{equation}
\end{lem}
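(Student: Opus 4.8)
The plan is to prove Lemma \ref{lindecay} in two parts: first the Caccioppoli-type inequality (the first inequality in \eqref{lincaccineq}), which is essentially standard once one has the weak formulation \eqref{linweaksol}, and then the decay estimate (the second inequality), which rests on a Campanato-type iteration combined with decay estimates for the constant-coefficient frozen equation after an even reflection across $\{x_m = 0\}$. Throughout I would exploit that $|x_m|^\beta$ is an $A_2$-Muckenhoupt weight with $\beta \in (-1,1)$, so the weighted Sobolev–Poincar\'e inequality, the weighted Caccioppoli inequality, and the regularity theory of Fabes–Kenig–Serapioni for degenerate elliptic equations $\mathrm{div}(|x_m|^\beta A \, Dv) = 0$ are all available on the relevant balls.

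First I would establish the Caccioppoli inequality. Given $B_\rho^+(y) \subset \mathscr{K}$ with $y \in \mathbb{R}^{m-1}\times\{0\}$, choose a cutoff $\zeta \in C_0^\infty(B_\rho(y))$ with $\zeta \equiv 1$ on $B_{\rho/2}(y)$, $0 \le \zeta \le 1$, $|D\zeta| \le C/\rho$, and test \eqref{linweaksol} with $\psi = \zeta^2 (v - \lambda)$ for an arbitrary constant $\lambda \in \mathbb{R}^n$ (admissible by a density argument analogous to Lemma \ref{test_functions}, since $\zeta^2(v-\lambda) \in W^{1,2}_\beta \cap L^2_\beta$ with compact support and one can mollify). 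Expanding the derivative and using the uniform ellipticity and boundedness of $G$ on $\mathscr{K}$, together with Young's inequality to absorb the cross term $\int x_m^\beta G^{ij}\langle \partial_i v, \partial_j \zeta\rangle \zeta (v-\lambda)$, yields
\[
\int_{B_{\rho/2}^+(y)} x_m^\beta |Dv|^2 \, \mathrm{d}x \le \frac{C}{\rho^2} \int_{B_\rho^+(y)} x_m^\beta |v - \lambda|^2 \, \mathrm{d}x
\]
for every $\lambda$; in particular one may take $\lambda = \overline{v}_{B_\rho^+(y),\beta}$, or whatever choice of $\lambda$ is convenient for the second step. The boundary condition $G^{im}(x',0) = 0$ plays no role here — only interior testing is used — so this part is routine.

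For the decay estimate I would argue by a Morrey–Campanato iteration on the weighted $L^2$-oscillation $\Phi(s) := s^{-(m+\beta)} \int_{B_s^+(y)} x_m^\beta |v - \overline{v}_{B_s^+(y),\beta}|^2 \, \mathrm{d}x$. Extend $v$ and $G$ by even reflection to $\tilde v$, $\tilde G$ on a full ball $B_\rho(y)$; because $G^{im}(x',0) = 0$ for $i \ne m$, the reflected $\tilde G$ is continuous (indeed, as noted after \eqref{metricchartbounds}, Lipschitz on compacts in the model coordinates), and $\tilde v$ solves $\mathrm{div}(|x_m|^\beta \tilde G \, D\tilde v) = 0$ weakly on $B_\rho(y)$. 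Freeze the coefficients: let $w$ solve $\mathrm{div}(|x_m|^\beta \tilde G(y) \, Dw) = 0$ on $B_s(y)$ with $w = \tilde v$ on $\partial B_s(y)$. Since $\tilde G(y)$ is a constant matrix with $\tilde G^{im}(y) = 0$ for $i \ne m$, the equation for $w$ decouples into the ordinary Laplacian in the tangential variables and a one-dimensional degenerate operator $\partial_m(|x_m|^\beta \partial_m \cdot)$ in the normal variable; its solutions (or at least their De Giorgi–Nash–Moser/Schauder theory from Fabes–Kenig–Serapioni) satisfy the standard improvement $\Phi_w(\theta s) \le C\theta^{2\alpha_0} \Phi_w(s)$ for some $\alpha_0 \in (0,1]$ and all $\theta \in (0,1)$. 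The difference $v - w$ is controlled by the oscillation of the coefficients: testing the equation for $\tilde v - w$ with $\tilde v - w$ itself and using the modulus of continuity $\omega_G$ of $\tilde G$ gives $\int_{B_s} |x_m|^\beta |D(\tilde v - w)|^2 \le C \omega_G(s)^2 \int_{B_s} |x_m|^\beta |D\tilde v|^2$, and then a weighted Poincar\'e inequality converts this into an $L^2_\beta$-oscillation bound. Combining, $\Phi(\theta s) \le C(\theta^{2\alpha_0} + \theta^{-(m+\beta)} \omega_G(s)^2) \Phi(s) + (\text{lower order})$; choosing $\theta$ small, then $s$ small so that $\omega_G(s)$ is small, gives $\Phi(\theta s) \le \theta^{2\gamma} \Phi(s)$ for some $\gamma \in (0,\alpha_0)$, and iteration yields $\Phi(s) \le C' s^{2\gamma}$, hence $\int_{B_\rho^+(y)} x_m^\beta |v - \lambda|^2 \le C' \rho^{m+\beta-2+2\gamma} \cdot \rho^2$ with $\lambda = \overline{v}_{B_\rho^+(y),\beta}$. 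Feeding this back into the Caccioppoli inequality gives \eqref{lincaccineq}. The main obstacle is the second step: one must verify that the reflected coefficient matrix is continuous (which is exactly why the hypothesis $G^{im}(x',0)=0$ is imposed) and that the frozen-coefficient degenerate equation genuinely enjoys a $C^{0,\alpha_0}$-type decay estimate up to and across the degeneracy hyperplane — this is where the $A_2$ structure and the Fabes–Kenig–Serapioni theory (or the explicit product structure of the frozen operator) are essential, and where all the care must be taken with the weight.
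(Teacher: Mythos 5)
Your Caccioppoli step matches the paper's: reflect across $\{x_m = 0\}$, test the weak formulation with $\eta^2(v - \lambda)$ (admissible by density of smooth functions in $W^{1,2}_\beta$, which holds because $|x_m|^\beta$ is an $A_2$ weight), absorb the cross term via Young's inequality, and invoke the uniform ellipticity and boundedness of $G$ on the compact set $\mathscr{K}$. This is routine and you have it right.

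For the decay estimate, however, the paper takes a considerably more direct route than your Campanato iteration. After the even reflection of $v$, and the even/odd reflection of $G$ (even for the entries $G^{ij}$ with $1 \le i,j \le m-1$ or $i=j=m$, odd for $G^{im}$, $G^{mi}$ with $i \ne m$ --- which is where the hypothesis $G^{im}(x',0)=0$ ensures the reflected coefficients define an honest continuous function), the reflected $v$ solves $\partial_i(|x_m|^\beta \tilde G^{ij} \partial_j v) = 0$ weakly on the two-sided domain $\mathscr{V}$. The paper then cites Theorem 2.3.12 of Fabes--Kenig--Serapioni \emph{directly for this variable-coefficient reflected equation} to conclude interior H\"older continuity of $v$ in $\mathscr{K}$; choosing $\lambda = v(y)$ then gives $\int_{B_\rho^+(y)} x_m^\beta |v - v(y)|^2 \,\mathrm{d}x \le C\rho^{2\gamma} \int_{B_\rho^+(y)} x_m^\beta \,\mathrm{d}x \le C'\rho^{m+\beta+2\gamma}$, and feeding this into the Caccioppoli inequality finishes the lemma. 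The point your proposal misses is that the FKS result is a De Giorgi--Nash--Moser theory, so it applies to merely measurable coefficients satisfying the $A_2$-degenerate ellipticity condition; there is no need to freeze coefficients, solve a comparison problem for $w$, estimate $v-w$ via the modulus of continuity of $G$, and iterate. Your approach is correct as stated --- indeed in the actual application (Lemma~\ref{bdrycontimppoincscal}) the matrix $G$ is constant, so freezing does nothing --- and it would yield a better exponent $\gamma$ when $G$ is H\"older or better, but the full Schauder-by-perturbation machinery re-proves what the cited theorem already provides. In particular, the worry you flag at the end about decay "across the degeneracy hyperplane" is automatic once you reflect: $\{x_m=0\}$ is an interior hyperplane of $\mathscr{V}$ and $|x_m|^\beta$ is an $A_2$ weight on all of $\mathbb{R}^m$, so the FKS interior estimate applies with no special treatment of the degeneracy set.
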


\begin{proof}
Let $\mathscr{V} = \{(x', x_m) \in \mathbb{R}^m \colon (x', |x_m|) \in \mathscr{U}\}$. We extend $v$ to $\mathscr{V}$
by even reflection. We also extend $G$ to $\mathscr{V}$, using even reflection of $G^{ij}$ for $1 \le i, j \le m - 1$ or
$i = j = m$ and odd reflection otherwise. Then $v$ is a weak solution of the equation
\begin{equation} \label{reflected_equation}
\partial_i (|x_m|^\beta G^{ij} \partial_j v) = 0 \quad \text{in $\mathscr{V}$}.
\end{equation}

Let $\eta \in C_0^{\infty}(B_{\rho}(y))$ with $\eta \equiv 1$ in $B_{\rho/2}(y)$, $0 \leq \eta \leq 1$, $\eta \equiv 0$ in $\mathbb{R}^m \backslash B_{\rho}(y)$ and $\vert D\eta \vert \leq \frac{2}{\rho}$.  It is well-known that $\vert x_m\vert^{\beta}$ is a Muckenhoupt weight of class $A_2$, see  \eqref{Apcond} for the definition,  which yields density of $C^{\infty}(B_{\rho}(y);\mathbb{R}^n)\cap W^{1,2}_{\beta}(B_{\rho}(y);\mathbb{R}^n)$ in $W^{1,2}_{\beta}(B_{\rho}(y);\mathbb{R}^n)$, for instance as a result of  Corollary 2.1.6   in \cite{MR1774162} Chapter 2.
It follows that $\eta^2 (v-\lambda)$ is an admissible test function for \eqref{linweaksol}.   Hence, using Young's inequality, we see that 
\begin{align}\label{linweaksoltestyoungapp}
\int_{B^+_{\rho}(y)}  \eta^2 x_m^{\beta} G^{ij}\langle \partial_iv, \partial_jv \rangle \mathrm{d}x
	&  = -2\int_{B^+_{\rho}(y)}   x_m^{\beta}  G^{ij}\langle \partial_iv, \eta\partial_j\eta (v-\lambda)  \rangle \mathrm{d}x\nonumber\\
	& \leq \frac{1}{2} \int_{B^+_{\rho}(y)}  \eta^2 x_m^{\beta} G^{ij}\langle \partial_iv, \partial_jv \rangle \mathrm{d}x\nonumber\\
	& \quad    + C \rho^{-2}\int_{B^+_{\rho}(y)}   x_m^{\beta} |G|  \vert v-\lambda\vert^2 \mathrm{d}x.
	\end{align}
In the compact set $\mathscr{K}$ (and in its reflection), we have a uniform upper bound for $G$ and a uniform, positive lower bound
for the lowest eigenvalue of $G$. Hence the first inequality in the statement follows.

For the second inequality, it now suffices to choose $\lambda = v(y)$ and
verify that $v$ is H\"older continuous in $\mathscr{K}$.
To this end, we apply the regularity theory of \cite{MR643158}, in particular Theorem 2.3.12 from \cite{MR643158},
to equation \eqref{reflected_equation} in $\mathscr{V}$.
\end{proof}

Now we prove an estimate for the $L^2$-norm of weak solutions of the Euler-Lagrange equations for $E_{\beta}$. 
Recall the notation
\[
\overline{v}_{\Omega,\beta} = \frac{\int_{\Omega}v\vert x_m\vert^{\beta}\mathrm{d}x}{\int_{\Omega} \vert x_m\vert^{\beta}\mathrm{d}x} 
\]
for $\Omega \subset \mathbb{R}^m$. The following result applies to fixed local coordinates as in Section \ref{metriccoord},
as usual, and therefore we assume that $\mathscr{U} \subset \mathbb{R}^{m - 1} \times [0, \infty)$ is
relatively open and $g$ is a Riemannian
metric on $\mathscr{U}$.

\begin{lem}\label{bdrycontimppoincscal}
Let $\mathscr{K} \subset \mathscr{U}$ be a compact set.
For every $\delta,C^{*} >0$  there exist constants $\varepsilon_0$, $\theta_0 \in (0,\frac{1}{8}]$ and $R_0 > 0$ such that the following holds.
Suppose $v \in W^{1,2}_{\beta}(\mathscr{U};\mathbb{R}^n)$ satisfies
	\begin{align}\label{wELeqnbound}
	\left\vert \int_{\mathscr{U}} x_m^{\beta} g^{ij} \langle  \partial_i\psi   ,\partial_jv\rangle \mathrm{dvol}_{g}\right\vert \leq C^{*}  \int_{\mathscr{U}}  \vert \psi\vert x_m^{\beta} \vert Dv\vert^2_{g}  \mathrm{dvol}_{g} 
	\end{align}
	for every $\psi \in C_0^{\infty}(\mathscr{U},\mathbb{R}^n)$. Suppose $B^+_R(x_0) \subset \mathscr{K}$ is a
	half-ball. If $R \leq R_0$ and
	\begin{equation*}
	R^{2-m-\beta}\int_{B^+_R(x_0)} x_m^{\beta}\vert Dv\vert^2_{g}\mathrm{dvol}_{g} \leq \varepsilon_0
	\end{equation*}
	then
	\begin{equation*}
	(\theta_0 R)^{-(m+\beta)}\int_{B^+_{\theta_0 R}(x_0)}  x_m^{\beta} \left\vert   v - \overline{v}_{B^+_{\theta_0 R}(x_0),\beta}\right\vert^2\mathrm{d}x  \leq \delta R^{2-m-\beta}\int_{B^+_R(x_0)}  x_m^{\beta} \vert Dv\vert^2_{g}\mathrm{dvol}_{g} .
	\end{equation*}
\end{lem}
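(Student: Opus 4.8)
The plan is to argue by contradiction via a blow-up (compactness) argument in the spirit of Schoen–Uhlenbeck and the harmonic map regularity literature. Suppose the conclusion fails: then for fixed $\delta, C^*$ and for every choice of $\theta_0$ (to be pinned down later) there are sequences $R_k \to 0$, points $x_k$ with $B_{R_k}^+(x_k) \subset \mathscr{K}$, and maps $v_k \in W^{1,2}_\beta(\mathscr{U};\mathbb{R}^n)$ satisfying \eqref{wELeqnbound} and
\[
\lambda_k^2 := R_k^{2-m-\beta}\int_{B_{R_k}^+(x_k)} x_m^\beta |Dv_k|_g^2 \,\mathrm{dvol}_g \to 0,
\]
but for which the $L^2$-oscillation inequality is violated for every $\theta_0 \in (0,\tfrac18]$. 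After passing to a subsequence we may assume $x_k \to \bar x \in \mathscr{K}$; since $R_k \to 0$, the rescaled metrics $g_k(y) := g(x_k + R_k y)$ converge locally uniformly (in $C^0$, and in fact the relevant coefficients converge in a way compatible with \eqref{metricchartbounds}) to a constant metric $G$, which by the vanishing of $g^{im}(x',0)$ for $i\ne m$ is block-diagonal at $x_m = 0$. Define the normalised blow-ups
\[
w_k(y) := \frac{1}{\lambda_k}\Bigl(v_k(x_k + R_k y) - \overline{(v_k)}_{B_1^+,\beta}\Bigr), \quad y \in B_1^+(0)\ \text{(or a fixed sub-ball)},
\]
so that $\overline{(w_k)}_{B_1^+,\beta} = 0$ and $\int_{B_1^+} x_m^\beta |Dw_k|_{g_k}^2 \le C$.

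The next step is to extract a limit. Using the weighted Poincaré inequality (valid because $|x_m|^\beta$ is $A_2$) and the monotonicity inequality Corollary \ref{monotonicity_inequality_Euclidean} — which gives uniform control of the rescaled energy on all smaller half-balls, hence uniform weighted $W^{1,2}_\beta$ bounds on compact subsets of $B_1^+$ — we obtain $w_k \rightharpoonup w$ weakly in $W^{1,2}_{\beta,\mathrm{loc}}(B_1^+)$ and strongly in $L^2_{\beta,\mathrm{loc}}$, for some $w$ with $\int_{B_1^+} x_m^\beta |Dw|_G^2 < \infty$. The key analytic point is to pass to the limit in the weak equation: testing \eqref{wELeqnbound} with $\lambda_k^{-1}\psi(R_k^{-1}(\cdot - x_k))$ and rescaling, the left side becomes $\int x_m^\beta g_k^{ij}\langle \partial_i\psi, \partial_j w_k\rangle\,\sqrt{\det g_k}\,dy$, while the right side is bounded by $C^* \lambda_k \int |\psi|\, x_m^\beta |Dw_k|_{g_k}^2\,dy \to 0$ since $\lambda_k \to 0$ and the rescaled energy density is bounded in $L^1_\beta$. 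Hence $w$ is a weak solution of the homogeneous linear equation $\partial_i(x_m^\beta G^{ij}\partial_j w) = 0$ in $B_1^+$ with the natural Neumann condition, i.e. $w$ satisfies the hypotheses of Lemma \ref{lindecay}.

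Now invoke Lemma \ref{lindecay}: applied to $w$ on a fixed ball $B_{1/2}^+(0) \subset B_1^+$, it yields a constant $\lambda \in \mathbb{R}^n$ and an exponent $\gamma > 0$ with
\[
\theta^{-(m+\beta)}\int_{B_\theta^+(0)} x_m^\beta |w - \lambda|^2\,dx \le C' \theta^{2\gamma} \quad \text{for all small } \theta.
\]
Replacing $\lambda$ by the weighted average $\overline{w}_{B_\theta^+,\beta}$ only improves the bound (the weighted average minimises the $L^2_\beta$-deviation). Choose $\theta_0 \in (0,\tfrac18]$ small enough that $C'\theta_0^{2\gamma} < \delta/2$, using that the normalisation gives $\int_{B_1^+} x_m^\beta |Dw|_G^2 \le$ (a multiple of) the limiting rescaled energy $\le$ a fixed constant, so the inequality violated by the $v_k$ becomes, after dividing by $\lambda_k^2$ and passing to the limit using the strong $L^2_\beta$-convergence of $w_k$ and weak lower semicontinuity of the (rescaled) energy on the right,
\[
\theta_0^{-(m+\beta)}\int_{B_{\theta_0}^+} x_m^\beta |w - \overline{w}_{B_{\theta_0}^+,\beta}|^2\,dx \ge \delta \cdot \liminf_k R_k^{2-m-\beta}\lambda_k^{-2}\int_{B_{R_k}^+(x_k)} x_m^\beta |Dv_k|_g^2\,\mathrm{dvol}_g = \delta,
\]
contradicting the bound $< \delta/2$ just obtained.

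\emph{Main obstacle.} The delicate point is the passage to the limit in the nonlinear term and the compactness of $w_k$: one needs that the rescaled energy densities $x_m^\beta |Dw_k|_{g_k}^2$ are uniformly bounded in $L^1_\beta$ on a fixed half-ball (not just on the half-ball of radius $1$), which is exactly what the uniform monotonicity of Corollary \ref{monotonicity_inequality_Euclidean} provides — this is why that corollary, rather than a bare Caccioppoli inequality, is needed here. One must also be careful that the strong $L^2_\beta$-convergence and the weighted Poincaré inequality hold up to the flat boundary $\{x_m = 0\}$, which is ensured by the $A_2$-property of $|x_m|^\beta$ and the even-reflection device; and that the degenerate/singular factor $x_m^\beta$ does not obstruct the rescaling — it does not, because $x_m^\beta$ is scale-invariant up to the overall factor $R_k^\beta$ which has been absorbed into the normalising exponent $2-m-\beta$. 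A minor technical point is that $\theta_0$ must be chosen \emph{independently} of the sequence, which is legitimate since $\gamma$ and $C'$ in Lemma \ref{lindecay} depend only on $G$ and $\mathscr{K}$, and the limit metric $G$ ranges over a compact family as $\bar x$ varies over $\mathscr{K}$; taking the infimum of the admissible $\theta_0$'s over this family (or re-running the contradiction argument so that $\bar x$ is also a limit) yields a uniform choice. Finally, the constants $\varepsilon_0$ and $R_0$ are recovered as the thresholds below which the contradiction argument forces the conclusion.
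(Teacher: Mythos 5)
Your proposal follows essentially the same contradiction/blow-up strategy as the paper: rescale, pass to a limit solving the linear equation, invoke Lemma \ref{lindecay} for decay of the linear solution, and contradict the assumed lower bound by choosing $\theta_0$ small. However, there is a genuine gap in how you set up compactness, and it is a gap you have in fact emphasised as the ``main obstacle'' rather than recognised as a misstep.

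You invoke Corollary \ref{monotonicity_inequality_Euclidean} to obtain uniform $W^{1,2}_\beta$ bounds on compact subsets of $B_1^+$. But that corollary requires the map to be \emph{weakly stationary harmonic} with respect to free boundary data, i.e.\ it uses criticality with respect to \emph{inner} variations (this is what produces the monotonicity formula in Section \ref{mon}). Lemma \ref{bdrycontimppoincscal} assumes only the outer-variation bound \eqref{wELeqnbound}; no stationarity is available, and indeed the lemma is intentionally stated in this greater generality. Your argument as written therefore rests on a tool you are not entitled to use. Fortunately, it is also unnecessary: the blow-up normalisation already gives $\int_{B_1^+}x_m^\beta\lvert Dw_k\rvert^2\,dx \le C$ directly, and since $\overline{(w_k)}_{B^+_{\theta_0},\beta}=0$, the $A_2$ weighted Poincar\'e inequality yields a uniform bound on $\lVert w_k\rVert_{W^{1,2}_\beta(B_1^+)}$ outright, with no reference to smaller balls. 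The paper then uses a weighted Rellich--Kondrachov compactness lemma (Lemma 2.5 of \cite{roberts2018regularity}) to extract a subsequence converging weakly in $W^{1,2}_\beta(B_1^+)$ and \emph{strongly in $L^2_\beta(B_1^+)$}. This global strong convergence is also what you actually need: your stated conclusion ``strongly in $L^2_{\beta,\mathrm{loc}}$'' is not sufficient, because $\overline{B^+_{\theta_0}(0)}$ touches the flat part $\{x_m=0\}$, which is not captured by compact subsets of the open half-ball; you must have convergence up to that boundary to pass to the limit in the contradicted inequality.

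Two minor remarks. First, your final display invokes ``weak lower semicontinuity of the (rescaled) energy on the right,'' but once you divide the violated inequality by $\lambda_k^2$ the right-hand side is just the constant $\delta$, so nothing needs to be lower-semicontinuous there; only the strong $L^2_\beta$ convergence on the left is used. Second, your observation that the limit metric $G$ must range over a compact family and that $\theta_0$ must be chosen uniformly over this family is correct and worth making explicit; the paper handles it by taking a convergent subsequence $y_{k_l}\to\tilde y$ first and relying on the uniformity of the constants in Lemma \ref{lindecay} over compact sets.
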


\begin{proof}
	The proof of the lemma is based on a blow-up procedure, analogous to that of the proof of Lemma 3.5 in \cite{moser2005partial}, for example. 
   Suppose, for a contradiction, that there exist $\delta,C^{*} > 0$ such that the claim is false. Then for any $\theta_0 \in (0,\frac{1}{8}]$ there is a sequence of maps $(v_k)_{k\in\mathbb{N}}$, with $v_k \in  {W}^{1,2}_{\beta}(\mathscr{U};\mathbb{R}^n)$ for every $k$, such that
	\begin{align}\label{wELeqnboundcontradictionseq}
	\left\vert \int_{\mathscr{U}} x_m^{\beta} g^{ij} \langle  \partial_i\psi   ,\partial_jv_k\rangle \mathrm{dvol}_{g}\right\vert \leq C^{*}  \int_{\mathscr{U}}  \vert \psi\vert x_m^{\beta} \vert Dv_k\vert^2_{g} \mathrm{dvol}_{g} 
\end{align}
for every $\psi \in C_0^{\infty}(\mathscr{U};\mathbb{R}^n)$ and a sequence of half-balls $B^+_{R_k}(y_k) \subset \mathscr{K}$ such that $R_k \to 0^+$ and
	\begin{align*}
\varepsilon_k^2 := R_k^{2-m-\beta}\int_{B^+_{R_k}(y_k)} x_m^{\beta}\vert Dv_k\vert^{2}_{g}\mathrm{dvol}_{g}  \to 0\ \text{as}\ k \to \infty
	\end{align*}
	but 
	\begin{align}\label{contpreseqboundbelow}
	(\theta_0 R_k)^{-(m+\beta)}\int_{B^+_{\theta_0 {R_k}}(y_k)} x_m^{\beta} \vert  v_k - \overline{(v_k)}_{B^+_{\theta_0 {R_k}}(y_k),\beta} \vert^2 \mathrm{d}x &   >  \delta \varepsilon_k^2.
	\end{align}
	
	Consider the normalised sequence $(w_k)_{k \in\mathbb{N}}$ with $w_k \in W^{1,2}_{\beta}(B^+_1(0);\mathbb{R}^n)$ defined by
	$
	w_k = {\varepsilon_k^{-1}}( v_k(R_kx+y_k)  - \overline{(v_k)}_{B^+_{\theta R_k}(y_k),\beta})
	$.
	Then 
	$
 \partial_i w_k(x)  = R_k\varepsilon_k^{-1}\partial_i v_k(R_kx+y_k)
	$
	and thus
	\begin{equation}\label{cont1rescenandcontzerorescav}
	\int_{B^+_1(0)}  x_m^{\beta} \vert D w_k\vert^{2}\mathrm{d}x   \leq C \quad \text{and} \quad \overline{(w_k)}_{B^+_{\theta_0  }(0),\beta}  =0,
	\end{equation}
	where $C$ is a constant depending on the Riemannian metric and on $\mathscr{K}$.
	Furthermore, we deduce from \eqref{contpreseqboundbelow} that
	\begin{equation}\label{contseqboundbelow}
	\theta_0^{-(m+\beta)}  \int_{B^+_{\theta_0 }(0  )}  x_m^{\beta} \left\vert w_k\right\vert^2\mathrm{d}x > \delta.
	\end{equation}
	Recall that $\vert x_m\vert^{\beta}$ is an $A_2$ weight as defined in \eqref{Apcond}. 
	Hence we may apply  \eqref{cont1rescenandcontzerorescav} and the Poincar\'e  inequality for $A_2$ weights to deduce that $(w_k)_{k \in \mathbb{N}}$ is bounded in  $W^{1,2}_{\beta}(B^+_1(0);\mathbb{R}^n)$. Using  Lemma 2.5 of \cite{roberts2018regularity}, we find a subsequence $(w_{k_l})_{l \in \mathbb{N}}$ which converges weakly in $W^{1,2}_{\beta}(B^+_1(0);\mathbb{R}^n)$ and strongly in $L^2_{\beta}(B^+_1(0);\mathbb{R}^n)$ to a map $w \in W^{1,2}_{\beta}(B^+_1(0);\mathbb{R}^n)$.
	
	Let $g_{k} = g(R_kx+y_k)$. Then in view of \eqref{wELeqnboundcontradictionseq} and \eqref{cont1rescenandcontzerorescav}  we calculate
	\begin{align*}
	\left\vert\int_{B^+_{1}( 0)} x_m^{\beta} g^{ij}_{k}\langle \partial_iw_k  ,\partial_j\psi  \rangle\sqrt{\text{det}(g_k)}\mathrm{d}x\right\vert  
	\leq  C^{*}\vert\vert \psi\vert\vert_{L^{\infty}(B^+_1(0);\mathbb{R}^n)}\varepsilon_k 
	\end{align*}
	for every    $\psi \in C^{\infty}_0(B_1(0);\mathbb{R}^n)$.
Recall that $\mathscr{K}$ is compact.
Hence, taking a convergent  subsequence of   $(y_{k_l})_{l \in \mathbb{N}}$ and re-indexing, we may assume that
$y_{k_l} \to \tilde{y}$, where $\tilde{y}_m = 0$.
Now we observe that the continuity of $g$ yields  $g_{k_l}(x) \to F := g(\tilde{y})$ uniformly. 
 
Using   the weak convergence of $(w_{k_l})_{l \in \mathbb{N}}$ to $w$, it follows that
\begin{align*}
	\left\vert \int_{B^+_{1}( 0)} x_m^{\beta} F^{ij}\langle \partial_i w ,\partial_j\psi  \rangle\sqrt{\text{det}(F)} \mathrm{d}x\right\vert & = \lim_{l\to \infty}\left\vert \int_{B^+_{1}( 0)} x_m^{\beta} g_{k_l}^{ij}\langle \partial_iw_{k_l}  ,\partial_j\psi  \rangle\sqrt{\text{det}(g_{k_l})} \mathrm{d}x \right\vert\nonumber\\
	 &  \leq C^{*}\vert\vert \psi\vert\vert_{L^{\infty}(B^+_1(0);\mathbb{R}^n)}\lim_{l \to \infty} \varepsilon_{k_l} =0
	\end{align*}
for every $\psi \in C_0^{\infty}(B_1(0);\mathbb{R}^n)$.  Hence $w$ satisfies \eqref{linweaksol} from Lemma \ref{lindecay} in $B^+_1(0)$
for the constant function $G = F^{-1} \sqrt{\det(F)}$. We apply the lemma to see that there exist $C>0$ and $\gamma \in (0,1)$
such that
\begin{equation}\label{linendecayapphalf}
	\theta_0^{2-m-\beta}\int_{B^+_{\theta_0}(0)} x_m^{\beta}|Dw|^2 \mathrm{d}x\leq C\theta_0^{2\gamma}.
	\end{equation}
	We also conclude, using strong convergence of $(w_{k_l})_{l \in \mathbb{N}}$ to $w$
	in $L^2_{\beta}(B^+_1(0);\mathbb{R}^n)$ to take limits in \eqref{contseqboundbelow}, that
		\begin{equation}\label{contlimseqboundbelow}
	\theta_0^{-( m+\beta)}\int_{B^+_{\theta_0 }(0)} x_{m}^{\beta}\left\vert w \right\vert^2\mathrm{d}x \geq \delta .
	\end{equation}
Now, since $\overline{w}_{B^+_{\theta_0 }(0),\beta}   =0$ , the Poincar\'e  inequality for $A_2$ weights  yields 
	\begin{align}\label{contlimsolpoinc}
	\theta_0^{-(m+\beta)} \int_{B^+_{\theta_0}(0)} x_{m}^{\beta}\vert w\vert^2\mathrm{d}x  \leq C\theta_0^{2-m-\beta}\int_{B^+_{\theta_0}(0)}  x_{m}^{\beta}  \vert   D w  \vert^2  \mathrm{d}x.
	\end{align}
Inequalities \eqref{linendecayapphalf}, \eqref{contlimseqboundbelow}, and \eqref{contlimsolpoinc} finally
imply that $\delta \le C\theta_0^{2\gamma}$.
If $\theta_0$ is chosen sufficiently small, then this is a contradiction.
\end{proof}

 \section{Maps into Spheres}\label{spheres}
 
 All of our results in previous sections apply to (free boundary) harmonic maps into a general smooth compact manifold $\mathcal{N}$. In this section and henceforth we specialise to the case where $\mathcal{N}  = \mathbb{S}^{n-1} = \{y \in \mathbb{R}^n:\vert y\vert = 1\}$ is the round $n$-sphere embedded isometrically in $\mathbb{R}^{n}$. In this case
a weakly harmonic map $v \in W^{1,2}_{\beta}(\mathscr{U};\mathbb{S}^{n-1})$ with respect to free boundary data satisfies
 \begin{equation}\label{wELeqnsphere}
 0 =  \int_{\mathscr{U}} x_m^{\beta} (g^{ij}\langle  \partial_i\psi   ,\partial_jv\rangle - |Dv|_g^2 \langle  v,\psi\rangle)\mathrm{dvol}_{g}   
 \end{equation}
 for every $\psi \in C_0^{\infty}(\mathscr{U};\mathbb{R}^n)$.

 In order to establish partial regularity,  as in the case for stationary harmonic maps into spheres, we take advantage of the fact that the Euler-Lagrange equations can be rewritten in a form reminiscent of a conservation law. We write \eqref{wELeqnsphere} essentially in `div-curl' form but we also have boundary conditions coming from the free boundary.

 \begin{lem}\label{conslawElequiv}
 	A map $v  \in {W}^{1,2}_{\beta}(\mathscr{U};\mathbb{S}^{n-1})$ satisfies \eqref{wELeqnsphere} for every $\psi \in C_0^{\infty}(\mathscr{U};\mathbb{R}^n)$ if and only if the vector fields $X_{ab} \in L^2_{-\beta}(\mathscr{U};\mathbb{R}^m)$ defined in components by 
 	\begin{equation}\label{divfreeveccomp}
 	X^i_{ab} =  x_m^{\beta}\sqrt{\det(g)}g^{ij} (v^a \partial_jv^b   -  v^b \partial_jv^a)
 	\end{equation}
 	satisfy
 	\begin{equation}\label{divequationwithneumann}
 	\int_{\mathscr{U}} \partial_i \psi X_{ab}^i  \mathrm{d}x=0
 	\end{equation}
 	for $a,b = 1,\ldots,n$ and every $\psi \in C_0^{\infty}(\mathscr{U};\mathbb{R})$.
 \end{lem}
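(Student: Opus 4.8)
The plan is to prove the equivalence by a weighted version of the classical div--curl reformulation of the sphere-valued harmonic map equation, so the whole argument rests on the pointwise algebraic consequences of the constraint $|v|\equiv 1$ together with two density/admissibility facts. Throughout I will use that $|v|^2=1$ a.e.\ forces $v^c\partial_j v^c = \frac{1}{2}\partial_j|v|^2 = 0$ (summation over $c$) a.e., and that $|X_{ab}^i|\le C x_m^\beta|Dv|_g$ on $\mathscr{U}$ by the uniform bounds \eqref{metricchartbounds} on $g$ and $\sqrt{\det g}$ together with $|v|=1$; the latter gives $\int_{\mathscr{U}}|x_m|^{-\beta}|X_{ab}|^2 \le C\int_{\mathscr{U}}x_m^\beta|Dv|_g^2 < \infty$, so indeed $X_{ab}\in L^2_{-\beta}(\mathscr{U};\mathbb{R}^m)$.

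For the forward implication I would fix $a,b$ and a scalar $\psi\in C_0^\infty(\mathscr{U};\mathbb{R})$ and test \eqref{wELeqnsphere} with $\phi^c := \psi(v^b\delta^c_a - v^a\delta^c_b)$. This $\phi$ lies in $W^{1,2}_\beta(\mathscr{U};\mathbb{R}^n)\cap L^\infty$ and is compactly supported, so it is admissible by the argument of Lemma \ref{test_functions} (which applies verbatim to \eqref{wELeqnsphere}, the zeroth-order term $|Dv|_g^2\langle v,\psi\rangle$ being controlled exactly like the one in \eqref{wELeqn}). Expanding $\partial_i\phi^c$, the term proportional to $\partial_i v^b\partial_j v^a - \partial_i v^a\partial_j v^b$ cancels by the symmetry of $g^{ij}$, and the zeroth-order contribution vanishes because $v^b v^a - v^a v^b = 0$; what survives is precisely $\int_{\mathscr{U}}\partial_i\psi\, X_{ab}^i\,\mathrm{d}x = 0$, which is \eqref{divequationwithneumann}.

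For the converse I would first record, by contracting \eqref{divfreeveccomp} with $v^b$ and with $\partial_i v^b$ and using $v^c\partial_j v^c = 0$, the two pointwise identities $\sum_b v^b X_{ab}^i = -x_m^\beta\sqrt{\det g}\,g^{ij}\partial_j v^a$ and $\sum_b \partial_i v^b X_{ab}^i = x_m^\beta\sqrt{\det g}\,|Dv|_g^2\, v^a$. Given $\psi\in C_0^\infty(\mathscr{U};\mathbb{R}^n)$, the first identity rewrites $\int_{\mathscr{U}}x_m^\beta g^{ij}\langle\partial_i\psi,\partial_j v\rangle\,\mathrm{dvol}_{g}$ as $-\sum_{a,b}\int_{\mathscr{U}}\partial_i\psi^a\, v^b X_{ab}^i\,\mathrm{d}x$. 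I would then apply \eqref{divequationwithneumann} with the scalar test function $\psi^a v^b$; this is legitimate because the pairing $\phi\mapsto\int_{\mathscr{U}}\partial_i\phi\, X_{ab}^i\,\mathrm{d}x$ is bounded by $\|D\phi\|_{L^2_\beta(\mathscr{U})}\|X_{ab}\|_{L^2_{-\beta}(\mathscr{U})}$, so \eqref{divequationwithneumann} extends by density of $C_0^\infty$ (the weight $|x_m|^\beta$ being $A_2$) to all $\phi\in W^{1,2}_{\beta,0}(\mathscr{U};\mathbb{R})$, and $\psi^a v^b$, being compactly supported in $\mathscr{U}$ and in $W^{1,2}_\beta\cap L^\infty$, belongs to that space (by mollification as in the proof of Lemma \ref{test_functions}). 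This replaces $-\sum_{a,b}\int\partial_i\psi^a v^b X_{ab}^i\,\mathrm{d}x$ by $\sum_{a,b}\int\psi^a\,\partial_i v^b X_{ab}^i\,\mathrm{d}x$, and the second pointwise identity evaluates the latter to $\int_{\mathscr{U}}x_m^\beta|Dv|_g^2\langle v,\psi\rangle\,\mathrm{dvol}_{g}$; rearranging gives \eqref{wELeqnsphere}.

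I do not expect a genuine obstacle. The computation is routine once one is careful with the bookkeeping of the algebraic identities, all of which reduce to $|v|\equiv1$ and its differentiated form, and the only subtlety is the two admissibility statements above: that \eqref{wELeqnsphere} may be tested against compactly supported $W^{1,2}_\beta\cap L^\infty$ maps, and that \eqref{divequationwithneumann} extends to $W^{1,2}_{\beta,0}$ test functions. Both follow from the $A_2$ property of $|x_m|^\beta$ and the density statement quoted from \cite{MR1774162}.
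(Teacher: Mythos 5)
Your proposal is correct and takes essentially the same route as the paper: both directions reduce to testing with $v^a\psi^b$-type test functions and invoking $|v|\equiv 1$ (and its differentiated form), with admissibility supplied by Lemma \ref{test_functions} and the $A_2$ density of $C_0^\infty$ in $W^{1,2}_{\beta,0}$. The only cosmetic difference is that in the converse direction you extract the two pointwise contraction identities explicitly before integrating, whereas the paper inserts $v^a\psi^b$ directly into \eqref{divequationwithneumann} and simplifies under the integral sign; the algebra and the justification are the same.
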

 \begin{rem}
 	Lemma \ref{conslawElequiv} asserts that $v \in W^{1,2}_{\beta}(\mathscr{U};\mathbb{S}^{n-1})$ is weakly harmonic with respect to free
 	boundary data if and only if the vector fields $X_{ab}\in L^2_{-\beta}(\mathscr{U};\mathbb{R}^n)$ satisfy the equation
 	$\text{div}(X_{ab}) = 0$ in $\mathrm{int}(\mathscr{U})$ with boundary condition $X_{ab} = 0$ on $\mathscr{U}\cap(\mathbb{R}^{m-1}\times\{0\})$ weakly. 
 \end{rem}
 \begin{proof}
 	We follow the proof of Lemma 3.5 in \cite{moser2005partial}. Suppose $v \in W^{1,2}_{\beta}(\mathscr{U};\mathbb{S}^{n-1})$ satisfies \eqref{wELeqnsphere} for every $\psi \in C_0^{\infty}(\mathscr{U};\mathbb{R}^n)$. Then, by
Lemma \ref{test_functions},    $v$ satisfies \eqref{wELeqnsphere} for every $\psi \in W^{1,2}_{\beta}(\mathscr{U};\mathbb{R}^n) \cap L^{\infty}(\mathscr{U};\mathbb{R}^n) $ with support compactly contained in $\mathscr{U}$. Then, for $\psi \in C_0^{\infty}(\mathscr{U};\mathbb{R})$, we conclude that the map $\psi v^a \in W^{1,2}_{\beta}(\mathscr{U};\mathbb{R}) \cap L^{\infty}(\mathscr{U};\mathbb{R}) $ is an admissible test function and it follows that
 	\begin{align}\label{weakdivfreecomp}
 \int_{\mathscr{U}} \partial_i \psi X_{ab}^i  \mathrm{d}x
 	& =  \int_{\mathscr{U}} x_m^{\beta} g^{ij} ( \partial_i (\psi v^a )\partial_jv^b   -  \partial_i(\psi v^b) \partial_jv^a)  \mathrm{dvol}_{g} \nonumber\\
 	& =  \int_{\mathscr{U}}x_m^{\beta}\vert Dv\vert^2_{g}\psi(v^av^b-v^av^b)    \mathrm{dvol}_{g}  = 0.
 	\end{align}
 	
 	Conversely, suppose that \eqref{divequationwithneumann}  is satisfied for  $a,b = 1,\ldots,n$ and every $\psi \in C_0^{\infty}(\mathscr{U};\mathbb{R})$.
 	Now let $\psi \in C_0^{\infty}(\mathscr{U};\mathbb{R}^n)$ with components $\psi^b\in C_0^{\infty}(\mathscr{U};\mathbb{R})$. Recall that for each $i = 1,\ldots,m$ we have $v^a\partial_iv^a = 0$ almost everywhere since $\vert v\vert^2 = 1$ almost everywhere. Furthermore, observe that by approximation $v^a\psi^b$ is an admissible test function for  \eqref{divequationwithneumann} for every $a,b=1,\ldots,n$. Hence, summing over $a,b = 1,\ldots,n$, we find  
 	\begin{align}\label{conservimpELsphere}
 	0 & = \int_{\mathscr{U}} \partial_i (v^a\psi^b) (x_m^{\beta}\sqrt{\text{det}(g)}g^{ij} (v^a \partial_jv^b   -  v^b \partial_jv^a))  \mathrm{d}x\nonumber\\
 	& =   \int_{\mathscr{U}} x_m^{\beta}\sqrt{\det(g)}g^{ij} \left(\psi^b\partial_i v^a (v^a \partial_jv^b   -  v^b \partial_jv^a) + v^a\partial_i \psi^b (v^a \partial_jv^b   -  v^b \partial_jv^a)\right)  \mathrm{d}x\nonumber\\
 	& =   \int_{\mathscr{U}}   x_m^{\beta} g^{ij}  (\langle\partial_i\psi,\partial_jv\rangle - \langle\partial_i v ,\partial_jv\rangle  \langle v,\psi\rangle )  \mathrm{dvol}_{g} 
 	\end{align}
 	as required.
 \end{proof}

 \section{Hardy Space Estimate With Weights}\label{hardy}
 
 In order to establish partial regularity for sphere-valued stationary harmonic maps we follow the approach in \cite{moser2005partial} Section 3
 (based on ideas by H\'elein \cite{helein1990regularite} and Evans \cite{MR1143435}). In particular, we combine the estimate given by Lemma \ref{bdrycontimppoincscal} for the mean squared oscillation with a Caccioppoli-Type inequality to show that if the re-scaled energy is sufficiently small, then it decays faster than implied by the monotonicity formula in Lemma \ref{enmon}. In order to derive this inequality, we will take advantage of the equation
derived in Lemma \ref{conservimpELsphere} satisfied by $v$. It implies, roughly speaking, that the product $Dv\cdot X$ lies,
after extending by a reflection with respect to $\mathbb{R}^{m - 1} \times \{0\}$, in a Hardy space.

 We recall the definition of the Hardy space $\mathcal{H}^1(\mathbb{R}^m)$. For more details, see
 \cite{MR1232192}, for example.
 Let $\mathcal{F}$ be the collection of all $\psi \in C_0^{\infty}(B_1(0);\mathbb{R})$ with $\sup_{B_1(0)}\vert D\psi\vert \leq 1$.  Define $\psi_s(x) = s^{-m}\psi(s^{-1}x)$ for $\psi \in \mathcal{F}$ and $s >0$. For a locally integrable distribution $v$ on $\mathbb{R}^m$ let $\psi_s * v = \int_{\mathbb{R}^m}\psi_s(x-y)v(y)\mathrm{d}y  $ and define  the maximal function 
 \begin{equation*}
 {M}v(x) = \sup_{\psi \in \mathcal{F}}\sup_{s>0}\vert (\psi_s * v)(x)\vert
 \end{equation*}
 for $x \in \mathbb{R}^m$.
 The Hardy Space $\mathcal{H}^1(\mathbb{R}^m)$ then consists of locally integrable distributions $v$ with norm $\vert\vert v\vert \vert_{\mathcal{H}^1(\mathbb{R}^m)} = \vert\vert  {M}v\vert \vert_{L^1(\mathbb{R}^m)}<\infty$.

 In order to prove our Hardy space estimate, we will need to utilise the theory of $A_p$ weights, which we define and discuss presently.  Suppose $\mu$ and $\nu$ are measures and let $\mathrm{d}\nu = w\mathrm{d}\mu$ for a non-negative locally integrable function $w$. Then $w$ is in $A_p(\mathrm{d}\mu)$ if 
 \begin{align}\label{Apcond}
 \frac{1}{\mu(B)}\int_{B}w\mathrm{d}\mu \left(\frac{1}{\mu(B)}\int_{B}w^{-\frac{1}{p-1}}\mathrm{d}\mu\right)^{p-1} \leq c
 \end{align}
 for every ball $B \subset\mathbb{R}^m$, see \cite{MR1011673} Chapter I.
 
 For the $A_p$ class of weights, many counterparts to estimates in unweighted Sobolev-Spaces remain true. We recall the estimates we need here.  Let $\mathrm{d}x$ denote the $m$-dimensional Lebesgue measure. We use the notation $W^{1,p}(\mathscr{U};w\mathrm{d}x)$ to denote weighted Sobolev spaces comprised of classes of $\mathrm{d}x$ measurable functions with weak first order derivatives such that the function and all its first order derivatives are $p$-integrable with respect to the measure $w\mathrm{d}x$. Recall that smooth functions contained in $W^{1,p}(\mathscr{U};w\mathrm{d}x)$ are dense provided $w$ is an $A_p$ weight, see Corollary 2.1.6 of \cite{MR1774162}. We further have the following Sobolev-Poincar\'e inequality. 
 \begin{lem}[\cite{MR643158} Theorem 1.5]\label{weightedsobolevpoinc}
 	Let  $w \in A_p(\mathrm{d}x)$ where $1<p<\infty$.   Then there exist $C,\delta>0$ such that for $1\leq \kappa \leq \frac{m}{m-1}+\delta$ and any $v$ which is Lipschitz continuous on the closure of $B_r(x_0)\subset\mathbb{R}^m$  we have 
 	\begin{equation}\label{weightedsobolevpoincestdense}
 	\left(\frac{1}{\int_{B_r(x_0)}w\mathrm{d}x}\int_{B_r(x_0)}\vert v - \overline{v}\vert^{\kappa p}w\mathrm{d}x\right)^{\frac{1}{\kappa p}} \leq Cr \left(\frac{1}{\int_{B_r(x_0)}w\mathrm{d}x}\int_{B_r(x_0)}\vert Dv\vert^{ p}w\mathrm{d}x\right)^{\frac{1}{ p}} 
 	\end{equation}
 	where we may choose either $\overline{v} = \frac{1}{\int_{B_r(x_0)}w\mathrm{d}x}\int_{B_r(x_0)}vw\mathrm{d}x$ or $\overline{v} = \frac{1}{\int_{B_r(x_0)} \mathrm{d}x}\int_{B_r(x_0)}v\mathrm{d}x$. Furthermore, \eqref{weightedsobolevpoincestdense} extends by density  to hold for   $v \in W^{1,p}(B_r(x_0);w\mathrm{d}x)$ by Corollary 2.1.6 of \cite{MR1774162}. 
 \end{lem}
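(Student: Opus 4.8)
The plan is to reduce the first inequality (the one for Lipschitz $v$) to its essentially classical proof and then to extend it by density; since the statement coincides with Theorem 1.5 of \cite{MR643158}, the role of the argument here is mainly bookkeeping and I would ultimately just invoke that theorem. First I would normalise by translation and scaling to $x_0 = 0$, $r = 1$, and start from the standard pointwise potential estimate
\[
|v(x) - \overline{v}_{B_1(0)}| \le C \int_{B_1(0)} \frac{|Dv(y)|}{|x-y|^{m-1}}\,\mathrm{d}y, \qquad x \in B_1(0),
\]
valid for Lipschitz $v$, where $\overline{v}_{B_1(0)}$ is the \emph{unweighted} mean; this identifies $v - \overline{v}_{B_1(0)}$ with a truncated Riesz potential $I_1\bigl(|Dv|\,\mathbf{1}_{B_1(0)}\bigr)$. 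The inequality with $\kappa = 1$ then follows from the weighted boundedness $I_1 \colon L^p(w\,\mathrm{d}x) \to L^p(w\,\mathrm{d}x)$ on balls, which is available precisely because $w \in A_p(\mathrm{d}x)$ (Muckenhoupt--Wheeden type estimates for fractional integrals).

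The step I expect to be the main obstacle is the gain of integrability, i.e. producing a $\delta > 0$ for which $\kappa$ may exceed $\tfrac{m}{m-1}$. This rests on the self-improving (``open-ended'') property of the Muckenhoupt classes: an $A_p$ weight satisfies a reverse H\"older inequality and therefore lies in $A_{p_0}$ for some $p_0 < p$, and this slack can be converted into a strictly better target exponent in the weighted fractional-integral estimate, yielding \eqref{weightedsobolevpoincestdense} with $\kappa p$ in place of $p$ on the left for $\kappa$ slightly above $\tfrac{m}{m-1}$. This is exactly the mechanism behind the proof of Theorem 1.5 of \cite{MR643158}, and rather than reproduce it I would cite that theorem directly.

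Finally I would dispose of the two remaining points. To pass from Lipschitz $v$ to general $v \in W^{1,p}(B_r(x_0); w\,\mathrm{d}x)$ I would use density of smooth functions in this space (Corollary 2.1.6 of \cite{MR1774162}), approximate $v$ by smooth $v_k$, pass to a subsequence converging $\mathrm{d}x$-a.e.\ (hence $w\,\mathrm{d}x$-a.e.), take the limit on the right-hand side of \eqref{weightedsobolevpoincestdense} directly and on the left-hand side by Fatou's lemma. To see that the two admissible choices of $\overline{v}$ are interchangeable, write $\overline{v}_w$ and $\overline{v}$ for the weighted and unweighted means and estimate
\[
|\overline{v} - \overline{v}_w| = \left| \frac{1}{|B_r(x_0)|}\int_{B_r(x_0)} (v - \overline{v}_w)\,\mathrm{d}x \right| \le \frac{1}{|B_r(x_0)|}\int_{B_r(x_0)} |v - \overline{v}_w|\,\mathrm{d}x;
\]
applying H\"older's inequality with exponents $\kappa p$ and $(\kappa p)'$ and then the $A_{\kappa p}$ condition for $w$ (valid since $A_p \subset A_{\kappa p}$) bounds the right-hand side by the left-hand side of \eqref{weightedsobolevpoincestdense} with mean $\overline{v}_w$, and the triangle inequality in $L^{\kappa p}(w\,\mathrm{d}x)$ then converts the inequality for one choice of mean into the inequality for the other, at the cost of enlarging $C$.
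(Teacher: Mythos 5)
The paper offers no proof of this lemma; it is stated as a direct citation of Theorem~1.5 of \cite{MR643158}, with the density extension noted via Corollary~2.1.6 of \cite{MR1774162}. Since you also ultimately invoke \cite{MR643158} and handle the density extension the same way, your proposal matches the paper's approach; the extra material you supply is a proof sketch of the cited result itself, and your handling of the equivalence of the two choices of mean is a correct (and standard) observation that is also implicit in the cited theorem.

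Two small inaccuracies in the sketch are worth flagging, although they do not affect anything in the paper. First, the base gain $\kappa = \tfrac{m}{m-1}$ does not come from boundedness $I_1 \colon L^p(w\,\mathrm{d}x) \to L^p(w\,\mathrm{d}x)$ (which, with $\kappa = 1$, yields only the Poincar\'e inequality); rather it comes from the weighted fractional-integral theorem in the form $I_1 \colon L^p(w\,\mathrm{d}x) \to L^{\kappa p}(w\,\mathrm{d}x)$ on balls, and this already uses the doubling structure of $w\,\mathrm{d}x$. Second, the open-ended/self-improving property of $A_p$ is what supplies the extra $\delta>0$ beyond $\kappa = \tfrac{m}{m-1}$, not the whole gain; your write-up attributes both to the reverse-H\"older mechanism. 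If you intend this as an honest sketch of the FKS argument rather than just a citation, you would want to separate those two steps.

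Your argument for interchanging the weighted and unweighted means is correct and complete: the observation $|\overline{v}-\overline{v}_w| \le |B_r|^{-1}\int_{B_r}|v-\overline{v}_w|$, combined with H\"older in $L^{\kappa p}(w\,\mathrm{d}x)$ and the $A_{\kappa p}$ condition (using $A_p \subset A_{\kappa p}$), exactly bounds the discrepancy by the left-hand side of \eqref{weightedsobolevpoincestdense}, and the triangle inequality finishes it. The Fatou/density passage is also fine.
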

 We observe that the bound $\frac{m}{m-1}+\delta$ may not allow the unweighted Sobolev exponent $\kappa = \frac{m}{m-p}$, however the bound given in the Lemma is sufficient for our purpose. A more precise bound is given in Chapter 15 of \cite{MR1207810}.

 We define the Hardy-Littlewood maximal operator corresponding to a measure $\mathrm{d}\mu$ by $M_{\mu}(f)(x) = \sup_{B \ni x}\frac{1}{\mu(B)}\int_B \vert f\vert\mathrm{d}\mu$. For $A_p$ weights, the following version of the Hardy-Littlewood Maximal Theorem holds. 
 \begin{lem}[\cite{MR1011673} Theorem 9]\label{ApHardylittlewood}
 	Let $p > 1$ and suppose $\mathrm{d}\nu = w\mathrm{d}\mu$ where $w \in A_p(\mathrm{d}\mu)$. Let $M_{\mu}$ denote the Hardy Maximal function associated to $\mu$. Then $\vert \vert M_{\mu}(f)\vert\vert_{L^p(\mathrm{d}\nu)} \leq C_p \vert \vert f\vert\vert_{L^p(\mathrm{d}\nu)}$ for every $f \in L^p(\mathrm{d}\nu)$.
 \end{lem}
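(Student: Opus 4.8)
Since this is a classical fact and we only cite it, I limit myself to a sketch of how one would prove it (\cite{MR1011673} contains the details). The plan is to proceed in two steps: first establish the \emph{weak-type} $(p,p)$ estimate
\[
\nu\bigl(\{x \in \mathbb{R}^m : M_\mu f(x) > \lambda\}\bigr) \le \frac{C}{\lambda^p}\int_{\mathbb{R}^m}|f|^p\,\mathrm{d}\nu, \qquad \lambda > 0,
\]
directly from the Muckenhoupt condition \eqref{Apcond}; then upgrade it to the strong-type bound by combining a self-improvement property of the class $A_p(\mathrm{d}\mu)$ with Marcinkiewicz interpolation against the trivial $L^\infty$ bound.

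For the weak-type step I would fix $\lambda > 0$ and cover $\mathcal{O}_\lambda := \{M_\mu f > \lambda\}$: every point of $\mathcal{O}_\lambda$ lies in a ball $B$ with $\mu(B)^{-1}\int_B|f|\,\mathrm{d}\mu > \lambda$, and by the Besicovitch covering theorem (valid for Radon measures on $\mathbb{R}^m$) one extracts a boundedly overlapping subfamily $\{B_j\}_j$ still covering $\mathcal{O}_\lambda$. On each $B_j$, writing $|f| = (|f|^p w)^{1/p}\,w^{-1/p}$ and applying H\"older's inequality with exponents $p$ and $p/(p-1)$ gives
\[
\left(\frac{1}{\mu(B_j)}\int_{B_j}|f|\,\mathrm{d}\mu\right)^{p} \le \frac{1}{\mu(B_j)^p}\left(\int_{B_j}|f|^p w\,\mathrm{d}\mu\right)\left(\int_{B_j}w^{-\frac{1}{p-1}}\,\mathrm{d}\mu\right)^{p-1}.
\]
Multiplying through by $\nu(B_j) = \int_{B_j}w\,\mathrm{d}\mu$, the factor multiplying $\int_{B_j}|f|^p w\,\mathrm{d}\mu$ is precisely the quantity appearing on the left of \eqref{Apcond} for the ball $B_j$, hence is bounded by the $A_p$ constant of $w$; since the chosen balls satisfy $\lambda^p < \left(\mu(B_j)^{-1}\int_{B_j}|f|\,\mathrm{d}\mu\right)^p$, this yields $\nu(B_j) \le C\lambda^{-p}\int_{B_j}|f|^p\,\mathrm{d}\nu$. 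Summing over $j$ and using the bounded overlap gives the weak-type $(p,p)$ inequality.

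To obtain the strong-type estimate I would then use that $A_p(\mathrm{d}\mu)$ is open-ended: the reverse H\"older inequality for $A_p$ weights (proved via a Calder\'on--Zygmund stopping-time decomposition adapted to $\mu$) implies $w \in A_{p-\varepsilon}(\mathrm{d}\mu)$ for some $\varepsilon \in (0, p-1)$. Applying the weak-type bound just obtained, but with the exponent $p - \varepsilon$ in place of $p$, shows that $M_\mu$ maps $L^{p-\varepsilon}(\mathrm{d}\nu)$ into weak $L^{p-\varepsilon}(\mathrm{d}\nu)$; at the other endpoint $M_\mu$ is trivially bounded on $L^\infty(\mathrm{d}\nu)$, since $\mu(B)^{-1}\int_B|f|\,\mathrm{d}\mu \le \|f\|_{L^\infty}$. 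The Marcinkiewicz interpolation theorem, applied on the measure space $(\mathbb{R}^m, \mathrm{d}\nu)$ with $p - \varepsilon < p < \infty$, then delivers $\|M_\mu f\|_{L^p(\mathrm{d}\nu)} \le C_p\|f\|_{L^p(\mathrm{d}\nu)}$, with $C_p$ depending only on $p$, the dimension $m$, and the $A_p$ constant of $w$.

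The step that will require the most care is the self-improvement: the reverse H\"older inequality, and hence the openness of $A_p(\mathrm{d}\mu)$, must be set up in the two-measure framework, which needs $\mu$ to be doubling (or, at the very least, to support the covering and stopping-time arguments on which the Calder\'on--Zygmund decomposition rests). In the applications in this paper $\mu$ is Lebesgue measure restricted to a subset of $\mathbb{R}^m$, so these hypotheses are automatic; for the statement in full generality we simply refer to \cite{MR1011673}.
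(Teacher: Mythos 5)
The paper does not prove this lemma; it cites \cite{MR1011673} Theorem 9, so there is no in-paper argument to compare against. Your sketch is a faithful outline of the standard two-step proof (weak-type $(p,p)$ from the $A_p$ condition via covering, then strong type via openness of $A_p$ plus Marcinkiewicz interpolation against the trivial $L^\infty$ bound), and the H\"older computation in the weak-type step is exactly the right one.

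One technical point is slightly misplaced. The paper's maximal function $M_\mu f(x) = \sup_{B \ni x}\mu(B)^{-1}\int_B|f|\,\mathrm{d}\mu$ is \emph{uncentered}: the witnessing ball $B$ need only contain $x$, not be centered at it. Besicovitch's theorem, however, requires a family of balls in which each point of the set is the \emph{centre} of some ball. So to run your covering step as written you must either pass to the centred maximal function and compare it with the uncentred one, or replace Besicovitch by a Vitali-type $5r$-covering applied to the witnessing balls. Both routes already invoke doubling of $\mu$ at the weak-type stage, not only at the reverse-H\"older/self-improvement stage where you locate that hypothesis. You do flag the doubling issue at the end, so the awareness is present — just note that it enters one step earlier than you say. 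Since in this paper $\mu$ is always of the form $|y_m|^\sigma \mathrm{d}y$ with $\sigma$ in a range guaranteeing an $A_\infty$ (hence doubling) weight, this is harmless for the applications at hand.
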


 Now we are in a position to prove our weighted Hardy estimates.  We first establish an estimate for the product of a gradient and a quantity which is divergence free in $\mathbb{R}^m$. We use $\cdot $ to denote the inner product in $\mathbb{R}^m$ for brevity.
 \begin{lem}\label{weightedHardy}
 	Let $\beta \in (-1,1)$ and $m \geq 3$. Then there exists $C>0$ such that if $v \in W^{1,2}_{\beta}(\mathbb{R}^m)$ and  $X \in L^2_{-\beta}(\mathbb{R}^m;\mathbb{R}^m)$   satisfies $\mathrm{div}(X ) = 0$ weakly in $\mathbb{R}^m$ then $D v \cdot X  \in \mathcal{H}^1(\mathbb{R}^m)$ and 
 	\begin{equation*}
 	\vert \vert Dv \cdot X  \vert\vert_{\mathcal{H}^1(\mathbb{R}^m)}\leq C \vert \vert Dv\vert\vert_{L^2_{\beta}(\mathbb{R}^m)}\vert \vert X \vert\vert_{L^2_{-\beta}(\mathbb{R}^m)}. 	 
 	\end{equation*}
 \end{lem}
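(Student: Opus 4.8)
The plan is to follow the proof of the unweighted div--curl lemma of Coifman--Lions--Meyer--Semmes \cite{coifman1990compacite}, in the streamlined form of Chanillo \cite{doi:10.1080/03605309108820829} used in \cite{moser2005partial}, substituting the weighted Sobolev--Poincar\'e inequality (Lemma \ref{weightedsobolevpoinc}) and the weighted Hardy--Littlewood maximal theorem (Lemma \ref{ApHardylittlewood}) for their classical analogues. These are available precisely because $w := |x_m|^{\beta}$ and $w^{-1} = |x_m|^{-\beta}$ are \emph{both} in $A_2$ when $\beta \in (-1,1)$.

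First I would reduce matters to an $L^1$-bound for the grand maximal function $M(Dv\cdot X)$. Fix $x_0 \in \mathbb{R}^m$, $s>0$ and $\psi \in \mathcal{F}$. Testing the weak equation $\mathrm{div}\,X = 0$ against the function $y \mapsto \psi_s(x_0-y)\big(v(y) - \overline{v}_{B_s(x_0),\beta}\big)$ --- admissible because, as in Lemma \ref{test_functions}, the $A_2$-property of $w$ allows compactly supported $W^{1,2}_{\beta}$-functions as test functions, and because $v \in W^{1,2}_{\beta}$, $X \in L^2_{-\beta}$ make every product integrable --- and using that $Dv$ is a gradient, one obtains the identity
\[
(\psi_s * (Dv \cdot X))(x_0) = \int_{B_s(x_0)} (\partial_i \psi_s)(x_0 - y)\,\big(v(y) - \overline{v}_{B_s(x_0),\beta}\big)\, X^i(y)\, dy,
\]
where the subtracted constant is harmless precisely because $\mathrm{div}\,X = 0$ annihilates its contribution. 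Since $|\partial_i \psi_s| \le s^{-m-1}\mathbbm{1}_{B_s(x_0)}$, this yields the pointwise bound $|(\psi_s * (Dv \cdot X))(x_0)| \le s^{-m-1}\int_{B_s(x_0)} |v - \overline{v}_{B_s(x_0),\beta}|\,|X|\,dx$.

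Next I would estimate the right-hand side by H\"older with the weight $w$, followed by the weighted Sobolev--Poincar\'e inequality of Lemma \ref{weightedsobolevpoinc} applied with an exponent $\kappa>1$ (allowed since $\kappa$ may be taken up to $\tfrac{m}{m-1}+\delta>1$), which is what retains a genuine gain of integrability. Collecting the powers of $s$ and of $\int_{B_s(x_0)}|x_m|^{\pm\beta}\,dx$ --- which are comparable to $s^m$ times the relevant averages and satisfy $w(B_s)\,w^{-1}(B_s) \approx s^{2m}$ by the $A_2$-condition --- one arrives at a pointwise estimate of the schematic form
\[
M(Dv \cdot X)(x_0) \le C \big(\mathcal{M}_{w}(|Dv|^{2})(x_0)\big)^{1/2}\,\big(\mathcal{M}_{w^{-1}}(|X|^{2})(x_0)\big)^{1/(2\kappa)},
\]
where $\mathcal{M}_{\mu}$ is the Hardy--Littlewood maximal operator with respect to $\mu\,dx$; the role of $\kappa>1$ is that the $X$-factor is then controlled in a space strictly better than the critical $L^1$-endpoint. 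Finally, splitting $dx = w^{1/2}\cdot w^{-1/2}\,dx$, applying H\"older, using Lemma \ref{ApHardylittlewood} to bound $\mathcal{M}_{w}$ on $L^{p}(w\,dx)$ and $\mathcal{M}_{w^{-1}}$ on $L^{q}(w^{-1}dx)$ for suitable $p,q>1$, and combining with the a priori bound $Dv\cdot X \in L^1$ (immediate from Cauchy--Schwarz with the weight), one concludes $\|M(Dv\cdot X)\|_{L^1} \le C\|Dv\|_{L^2_\beta}\|X\|_{L^2_{-\beta}}$.

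The hard part is this last step. With the naive choices (H\"older at exponent $2$, then the plain Poincar\'e inequality) the argument only places $M(Dv\cdot X)$ in weak $L^1$, so one must genuinely exploit the super-critical Sobolev exponent of Lemma \ref{weightedsobolevpoinc} and keep careful track, through each H\"older splitting, of which Muckenhoupt class every weight --- and every fractional power of it --- belongs to; this is exactly where the hypothesis $\beta \in (-1,1)$, equivalently the membership of both $|x_m|^{\beta}$ and $|x_m|^{-\beta}$ in $A_2$, is used. A minor, routine point is verifying that the density argument behind Lemma \ref{test_functions} covers the specific test functions $\psi_s(x_0-\cdot)\big(v - \overline{v}_{B_s(x_0),\beta}\big)$.
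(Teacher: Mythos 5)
Your plan correctly identifies the right ingredients---the CLMS-style integration by parts, the weighted Sobolev--Poincar\'e inequality (Lemma \ref{weightedsobolevpoinc}), the weighted Hardy--Littlewood maximal theorem (Lemma \ref{ApHardylittlewood})---and correctly diagnoses why the naive Cauchy--Schwarz version only gives weak~$L^1$. But the schematic pointwise bound you write down,
\[
M(Dv \cdot X)(x_0) \le C \bigl(M_w(|Dv|^2)(x_0)\bigr)^{1/2}\bigl(M_{w^{-1}}(|X|^2)(x_0)\bigr)^{1/(2\kappa)},
\]
is not the right shape. It is dimensionally inconsistent (the right side scales like $|Dv|\,|X|^{1/\kappa}$, not $|Dv|\,|X|$), and it still has the exponent $2$ inside the maximal operator acting on $|Dv|$. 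The whole point of exploiting the super-critical Sobolev exponent is to push \emph{both} inner exponents strictly below~$2$. The correct schematic form, as in the paper, is
\[
C \bigl(M_{\gamma q}(|Dv|^{p})(x_0)\bigr)^{1/p}\bigl(M_{-\gamma q/(q-1)}(|X|^{q'})(x_0)\bigr)^{1/q'}
\]
with $p = q(m-1)/m < 2$ (from Sobolev--Poincar\'e with $\kappa p = q$) and $q' = q/(q-1) < 2$ (from H\"older, using $q > 2$); only then can Lemma \ref{ApHardylittlewood} be applied on $L^{2/p}$ and $L^{2/q'}$ with exponents strictly larger than~$1$.

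The deeper gap is the absence of an auxiliary free parameter. You take the weight in the intermediate H\"older and Poincar\'e steps to be $w = |x_m|^\beta$ itself (hence the average $\overline{v}_{B_s(x_0),\beta}$). But there is no reason that weight should be $|x_m|^\beta$, and in fact it cannot be in general if the $A_p$ bookkeeping is to close up. The paper instead fixes $q \in (2, 2m/(m-1))$, introduces a parameter $\gamma$, splits the integrand using $|y_m|^\gamma\,|y_m|^{-\gamma}$, takes the average of $v$ with respect to $|y_m|^{\gamma q}\,dy$, and applies Lemma~\ref{weightedsobolevpoinc} with weight $|y_m|^{\gamma q}$. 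The range of admissible $\gamma$, given in \eqref{Hardygammachoicerec}, is dictated by four distinct Muckenhoupt conditions verified in Lemma~\ref{Apformaximal}: $|y_m|^{\gamma q} \in A_q(dy)$ and $A_{q(m-1)/m}(dy)$, $|y_m|^{\beta-\gamma q} \in A_{2m/(q(m-1))}$ relative to $|y_m|^{\gamma q}\,dy$, and $|y_m|^{\gamma q/(q-1)-\beta} \in A_{2(q-1)/q}$ relative to $|y_m|^{-\gamma q/(q-1)}\,dy$. Your heuristic that $|x_m|^\beta$ and $|x_m|^{-\beta}$ are both $A_2$ is necessary but far from sufficient; without the freedom in $\gamma$, the relation $\gamma q = \beta$ that your proposal implicitly enforces is incompatible with the upper bound $\gamma < 1 - (m+q)/(mq)$ from Lemma~\ref{Apformaximal}(2) for a nontrivial range of $(\beta,q)$. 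So the key idea---the auxiliary parameter and the attendant $A_p$ verification---is missing, and the estimate as written does not hold.
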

\begin{rem}
 We will require certain weight functions to satisfy corresponding $A_p$ conditions for the proof of Lemma \ref{weightedHardy} but postpone the proof of these conditions to a subsequent Lemma, see Lemma \ref{Apformaximal}. 
\end{rem} 
 
 \begin{proof}[Proof of Lemma \ref{weightedHardy}]
The following arguments are inspired by the work of Coifman-Lions-Meyer-Semmes \cite{coifman1990compacite}.

 	Let $\psi_s(x) = s^{-{m}}\psi(\frac{x}{s})$ for $s > 0$ and where $\psi \in C_0^{\infty}(B_1(0);\mathbb{R})$ with $\vert D\psi\vert \leq 1$. Then we have 
 	\begin{align*}
 	\psi_s*(D v \cdot X ) & = s^{-m}\int_{B_s(x)}\psi\left(\frac{x-y}{s}\right)D v \cdot X  \mathrm{d}y\nonumber\\
 	& =  -s^{-m-1}  \int_{B_s(x)}D\psi\left(\frac{x-y}{s}\right)  (v-\overline{v}) \cdot X \mathrm{d}y.
 	\end{align*} 
  Now fix   $q \in (2, \frac{2m}{m-1})$. Then $2 > \frac{q}{q-1}>1$ and $2 > q\frac{m-1}{m} > 1 $.
Furthermore, we check that  
\[
(\beta  + 1)\frac{ (m-1)}{2m}  -\frac{1}{q}   <  \min\left\{\frac{1-\frac{2}{ q} +   \beta}{2},1 - \frac{m+q}{mq}\right\}.
\]
Hence we may choose $\gamma \neq 0$ such that  
 	\begin{equation}\label{Hardygammachoicerec}
 	(\beta  + 1)\frac{ (m-1)}{2m}  -\frac{1}{q}   <  \gamma        < \min\left\{\frac{1-\frac{2}{ q} +   \beta}{2},1 - \frac{m+q}{mq}\right\}.
 	\end{equation}  
 	 Let $\overline{v}$ be the average of $v$ on $B_s(x)$ with respect to $\vert y_m\vert^{\gamma q}\mathrm{d}y$.
Furthermore, we write
\[
|\Omega|_\sigma = \int_\Omega |y_m|^\sigma \, dy 
\]
for measurable sets $\Omega \subset \mathbb{R}^m$.
We have 
 	\begin{equation}\label{Hardysameweightsobolevpoincestweightpre}
-s^{-m - 1} \int_{B_s(x)}D\psi\left(\frac{x-y}{s}\right)  (v-\overline{v}) \cdot X \mathrm{d}y
 	\leq  \frac{C  }{s\vert B_s(x)\vert }\int_{B_s(x)}  \vert v-\overline{v} \vert \vert X\vert\mathrm{d}y.
 	\end{equation}
 	Then, using H\"older's inequality for the Lebesgue measure we see that,
 	\begin{align}\label{Hardysameweightsobolevpoincestweight}
 	& \frac{C  }{s\vert B_s(x)\vert }\int_{B_s(x)}  \vert v-\overline{v} \vert \vert X\vert\mathrm{d}y\nonumber\\ 
 	& =\frac{C  }{s\vert B_s(x)\vert }\int_{B_s(x)} \vert y_m\vert^{\gamma}\vert y_m\vert^{-\gamma} \vert v-\overline{v} \vert \vert X\vert\mathrm{d}y\nonumber\\
 	& \leq \frac{C}{s\vert B_s(x)\vert }\left(\int_{B_s(x)}\vert y_m\vert^{q\gamma}\vert v - \overline{v}\vert^q\mathrm{d}y\right)^{\frac{1}{q}}\left(\int_{B_s(x)}\vert y_m\vert^{-\gamma\frac{q}{q-1}}\vert X\vert^\frac{q}{q-1}\mathrm{d}y\right)^{\frac{q-1}{q}}\nonumber\\
 	& = \frac{C\vert B_s(x)\vert_{\gamma q}^{\frac{1}{q}}\vert B_s(x)\vert_{-\gamma \frac{q}{q-1}}^{\frac{q-1}{q}} }{s\vert B_s(x)\vert }\left(\frac{1}{\vert B_s(x)\vert_{\gamma q}}\int_{B_s(x)}\vert y_m\vert^{\gamma q}\vert v - \overline{v}\vert^q\mathrm{d}y\right)^{\frac{1}{q}}\nonumber\\
 	& \quad \cdot\left(\frac{1}{\vert B_s(x)\vert_{-\gamma \frac{q}{q-1}}}\int_{B_s(x)}\vert y_m\vert^{-\gamma \frac{q}{q-1}}\vert X\vert^\frac{q}{q-1}\mathrm{d}y\right)^{\frac{q-1}{q}}.
 	\end{align}
 	Note that since  $\gamma$ satisfies \eqref{Hardygammachoicerec}, the right hand side of \eqref{Hardysameweightsobolevpoincestweight} is finite.  Now we observe that 
 	\begin{align}\label{weightedballisAq}
 \frac{ \vert B_s(x)\vert_{\gamma q}^{\frac{1}{q}}\vert B_s(x)\vert_{-\gamma \frac{q}{q-1}}^{\frac{q-1}{q}} }{ \vert B_s(x)\vert } 
 	& = 
 	\left(\frac{ \int_{B_s(x)}\vert y_m\vert^{\gamma q}\mathrm{d}y }{ \vert B_s(x)\vert }\left( \frac{   \int_{B_s(x)}\vert y_m\vert^{-\gamma \frac{q}{q-1}}\mathrm{d}y  }{ \vert B_s(x)\vert } \right)^{q-1}\right)^{\frac{1}{q}} 
 	\end{align}
 	which is precisely the quantity in the condition that $\vert y_m\vert^{\gamma q}$ is an $A_q(\mathrm{d}y)$ weight (to the power of $\frac{1}{q}$). Since $\gamma$ satisfies \eqref{Hardygammachoicerec},  we have  $\gamma \in (-\frac{1}{q}, 1 - \frac{1}{q})$ and  Lemma \ref{Apformaximal}  implies that  $\vert y_m\vert^{\gamma q}$ is an $A_q(\mathrm{d}y)$ weight. Hence, 
 	\begin{align}\label{weightedballisAqbound}
 	\frac{ \vert B_s(x)\vert_{\gamma q}^{\frac{1}{q}}\vert B_s(x)\vert_{-\gamma \frac{q}{q-1}}^{\frac{q-1}{q}} }{ \vert B_s(x)\vert } 
 	 \leq C(m,q,\gamma).
 	\end{align}
 	  We  apply Lemma \ref{weightedsobolevpoinc} with $p = q\frac{m-1}{m}$ and $\kappa = \frac{m}{m-1}$ so that $\kappa p = q$, noting that our choice of $\gamma$ and Lemma \ref{Apformaximal} imply   $\vert y_m\vert^{\gamma q}$ is an $A_{q\frac{m-1}{m}}(\mathrm{d}y)$ weight.  This yields
 	  \begin{align}\label{sobolevpoincapp}
 	  \frac{1}{s}\left(\frac{1}{\vert B_s(x)\vert_{\gamma q}}\int_{B_s(x)}\vert y_m\vert^{\gamma q}\vert v - \overline{v}\vert^q\mathrm{d}y\right)^{\frac{1}{q}}\leq C \left(\frac{1}{\vert B_s(x)\vert_{\gamma q}}\int_{B_s(x)}\vert y_m\vert^{\gamma q}\vert Dv\vert^{\frac{q(m-1)}{m}}\mathrm{d}y\right)^{\frac{m}{q(m-1)}}.
 	  \end{align}
 	  We combine \eqref{Hardysameweightsobolevpoincestweightpre}, \eqref{Hardysameweightsobolevpoincestweight}, \eqref{weightedballisAqbound} and \eqref{sobolevpoincapp} to see that 
 	\begin{align}\label{Hardysameweightsobolevpoincestweightappsobpoincboundbymaximal}
 	\lefteqn{-s^{-m-1}\int_{B_s(x)}D\psi\left(\frac{x-y}{s}\right)  (v-\overline{v}) \cdot X\mathrm{d}y} \qquad \nonumber\\
 	& \leq   C  (M_{\gamma q}(\vert Dv\vert^{\frac{q(m-1)}{m}})(x))^{\frac{m}{q(m-1)}}(M_{-\gamma \frac{q}{q-1}}(\vert X\vert^{\frac{q}{q-1}})(x))^{\frac{q-1}{q}},
 	\end{align}
 	where we use the notation $M_{\gamma q}$ and $M_{-\gamma\frac{q}{q-1}}$ for the  Maximal functions  $M_{\mu_1}$ and $M_{\mu_2}$, as defined prior to Lemma \ref{ApHardylittlewood}, corresponding to the measures $\mathrm{d}\mu_1 = \vert y_m\vert^{\gamma q}\mathrm{d}y$ and $\mathrm{d}\mu_2 = \vert y_m\vert^{-\gamma\frac{q}{q-1}}\mathrm{d}y$  respectively.

 It follows from \eqref{Hardysameweightsobolevpoincestweightappsobpoincboundbymaximal} and H\"older's inequality  that
 	\begin{align}\label{estimateofHardynorm}
 	&\left\vert\left\vert \sup_{\psi \in \mathcal{F}}\sup_{s>0} \psi_s*(\nabla v \cdot X)\right\vert\right\vert_{L^1(\mathbb{R}^m)}\nonumber\\
 	& \leq C\int_{\mathbb{R}^m}    (M_{\gamma q}(\vert Dv\vert^{\frac{q(m-1)}{m}}))^{\frac{m}{q(m-1)}}(M_{-\gamma \frac{q}{q-1}}(\vert X\vert^{\frac{q}{q-1}}))^{\frac{q-1}{q}}\mathrm{d}x\nonumber\\
 	& = C\int_{\mathbb{R}^m}  \vert x_m\vert^{\frac{\beta}{2}}  (M_{\gamma q}(\vert Dv\vert^{\frac{q(m-1)}{m}}))^{\frac{m}{q(m-1)}}\vert x_m\vert^{-\frac{\beta}{2}}(M_{-\gamma \frac{q}{q-1}}(\vert X\vert^{\frac{q}{q-1}}))^{\frac{q-1}{q}}\mathrm{d}x\nonumber\\
 	& \leq C\left(\int_{\mathbb{R}^m}  \vert x_m\vert^{\beta}  (M_{\gamma q}(\vert Dv\vert^{\frac{q(m-1)}{m}}))^{2\frac{m}{q(m-1)}} \mathrm{d}x\right)^{\frac{1}{2}}\left(\int_{\mathbb{R}^m}   \vert x_m\vert^{-\beta}(M_{-\gamma \frac{q}{q-1}}(\vert X\vert^{ \frac{q}{q-1}}))^{2\frac{q-1}{q}}\mathrm{d}x\right)^{\frac{1}{2}}.
 	\end{align}
 	
 	Now we apply Lemma \ref{ApHardylittlewood} to two sets of measures. We write  $\mathrm{d}\mu_1 = \vert x_m\vert^{ \gamma q}\mathrm{d}y$ and  $\mathrm{d}\nu_1 = \vert x_m\vert^{\beta - \gamma q}\mathrm{d}\mu_1 = \vert x_m\vert^{\beta }\mathrm{d}y$. It follows from Lemma \ref{Apformaximal} that $\vert x_m\vert^{\beta - \gamma q}$ is an $A_{2\frac{m}{q(m-1)}}(\mathrm{d}\mu_1)$ weight, hence Lemma \ref{ApHardylittlewood} yields
 	\begin{align}\label{Hardylittlewoodappliedtosobolev}
 	\int_{\mathbb{R}^m}  \vert x_m\vert^{\beta}  (M_{\gamma q}(\vert Dv\vert^{\frac{q(m-1)}{m}}))^{2\frac{m}{q(m-1)}} \mathrm{d}x   \leq  C\int_{\mathbb{R}^m}  \vert x_m\vert^{\beta}  \vert Dv\vert^{2}  \mathrm{d}x.
 	\end{align}
 	Similarly we write $\mathrm{d}\mu_{2} = \vert x_m\vert^{-\gamma \frac{q}{q-1}}\mathrm{d}x$ and $\mathrm{d}\nu_2 = \vert x_m\vert^{ \gamma \frac{q}{q-1}-\beta}\mathrm{d}\mu_2 = \vert x_m\vert^{-\beta}\mathrm{d}x$. It follows from Lemma \ref{Apformaximal} that $\vert x_m\vert^{ \gamma \frac{q}{q-1}-\beta}$ is an   $A_{2\frac{q-1}{q}}(\mathrm{d}\mu_2)$ weight and hence it follows from   Lemma \ref{ApHardylittlewood} that
 	\begin{align}\label{Hardylittlewoodappliedtoconj}
 	\int_{\mathbb{R}^m}   \vert x_m\vert^{-\beta}(M_{-\gamma \frac{q}{q-1}}(\vert X\vert^{ \frac{q}{q-1}}))^{2\frac{q-1}{q}}\mathrm{d}x \leq C\int_{\mathbb{R}^m}   \vert x_m\vert^{-\beta}\vert X\vert^{2}  \mathrm{d}x.
 	\end{align}
 	The conclusion of the Lemma follows from \eqref{estimateofHardynorm}, \eqref{Hardylittlewoodappliedtosobolev} and \eqref{Hardylittlewoodappliedtoconj}.
 \end{proof}

Next we prove a local version of Lemma \ref{weightedHardy}.
\begin{lem}\label{weightedHardyloc}
	Let $\beta \in (-1,1)$, $m \geq 3$, $v \in W^{1,2}_{\beta}(B_r(x_0);\mathbb{R})$ and suppose that $X  \in L^2_{-\beta}(B_r(x_0);\mathbb{R}^m)$  with $\mathrm{div}(X  ) = 0$ weakly in $B_r(x_0)$.  Let $\eta \in C_0^{\infty}(B_r(x_0);\mathbb{R})$ with $ 0 \leq \eta \leq 1$, $\eta = 1$ in $B_{\frac{r}{2}}(x_0)$ and $\vert D\eta\vert \leq \frac{C_0}{r}$. Extend $Dv$ and $ X $ by $0$ to $\mathbb{R}^m$ and let $\lambda = \left(\int_{\mathbb{R}^m}\eta\mathrm{d}x\right)^{-1}\int_{\mathbb{R}^m}\eta Dv\cdot X \mathrm{d}x$. Then   $\eta ( D v \cdot X -\lambda)  \in \mathcal{H}^1(\mathbb{R}^m)$ and, for some constant $C = C(m,\beta,C_0)$ we have
	\begin{equation*}
	\vert \vert\eta ( D v \cdot X -\lambda) \vert\vert_{\mathcal{H}^1(\mathbb{R}^m)}\leq C \vert \vert Dv\vert\vert_{L^2_{\beta}(B_r(x_0);\mathbb{R}^m)}\vert \vert X  \vert\vert_{L^2_{-\beta}(B_r(x_0);\mathbb{R}^m)}. 	 
	\end{equation*}
\end{lem}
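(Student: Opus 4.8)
The plan is to mimic the proof of Lemma~\ref{weightedHardy}, estimating the grand maximal function of $\eta(Dv\cdot X-\lambda)$ directly and splitting the scales into $s\le r$ and $s>r$; the subtraction of $\lambda$ is exactly what makes the large-scale part summable over $\mathbb{R}^m$. Subtracting a constant from $v$ we may assume $\overline v_{B_r(x_0),\beta}=0$, which changes neither $Dv$ nor $\lambda$. Using $\lvert x_m\rvert^{\beta}\in A_2$ we extend $v$ to $\hat v\in W^{1,2}_{\beta}(\mathbb{R}^m)$ with $\lVert D\hat v\rVert_{L^2_\beta(\mathbb{R}^m)}\le C\lVert Dv\rVert_{L^2_\beta(B_r(x_0))}$ (a weighted Sobolev extension together with the weighted Poincar\'e inequality); we keep writing $F:=Dv\cdot X=D\hat v\cdot X$, which is unambiguous since $\mathrm{supp}\,\eta\cup\mathrm{supp}\,X\subset B_r(x_0)$. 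Elementary bounds to be recorded first: $\int_{\mathbb{R}^m}\eta(F-\lambda)\,dx=0$ by definition of $\lambda$; $\lvert\lambda\rvert\le Cr^{-m}\lVert Dv\rVert_{L^2_\beta}\lVert X\rVert_{L^2_{-\beta}}$, using $\int\eta\ge\lvert B_{r/2}(x_0)\rvert$ and H\"older with the weight for $\int_{B_r(x_0)}\lvert F\rvert$; and consequently $\int_{B_r(x_0)}\lvert F-\lambda\rvert\,dx\le C\lVert Dv\rVert_{L^2_\beta}\lVert X\rVert_{L^2_{-\beta}}$.

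For $\psi\in\mathcal{F}$ and $0<s\le r$ we write $\psi_s*(\eta(F-\lambda))=\psi_s*(\eta F)-\lambda(\psi_s*\eta)$. For the first summand, for any constant $\overline{\hat v}$ the function $(\hat v-\overline{\hat v})\,\psi_s(x-\cdot)\,\eta$ is compactly supported in $B_r(x_0)$ and lies in $W^{1,2}_\beta$, hence is admissible in the weak equation $\mathrm{div}\,X=0$ on $B_r(x_0)$ (by density of $C_0^\infty(B_r(x_0))$, as in Lemma~\ref{test_functions}); this yields
\[
\psi_s*(\eta F)(x)=\int(\hat v-\overline{\hat v})(y)\,(D\psi_s)(x-y)\cdot X(y)\,\eta(y)\,dy-\int(\hat v-\overline{\hat v})(y)\,\psi_s(x-y)\,D\eta(y)\cdot X(y)\,dy .
\]
Choosing $\overline{\hat v}$ to be the average of $\hat v$ over $B_s(x)$ with respect to $\lvert y_m\rvert^{\gamma q}\,dy$ (with $q$ and $\gamma$ as in the proof of Lemma~\ref{weightedHardy}) and using $\lvert D\eta\rvert\le C_0/r\le C_0/s$ to absorb the second integral into the first, we obtain
\[
\lvert\psi_s*(\eta F)(x)\rvert\le C\,s^{-m-1}\int_{B_s(x)}\lvert\hat v-\overline{\hat v}\rvert\,\lvert X\rvert\,dy ,
\]
which is precisely the quantity estimated in Lemma~\ref{weightedHardy}; H\"older, the $A_q$ bound for $\lvert y_m\rvert^{\gamma q}$, and the weighted Sobolev--Poincar\'e inequality (Lemma~\ref{weightedsobolevpoinc}) bound the right-hand side by $C\,(M_{\gamma q}(\lvert D\hat v\rvert^{q(m-1)/m})(x))^{m/(q(m-1))}(M_{-\gamma q/(q-1)}(\lvert X\rvert^{q/(q-1)})(x))^{(q-1)/q}$. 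The remaining piece satisfies $\lvert\lambda(\psi_s*\eta)(x)\rvert\le C\lvert\lambda\rvert$ and vanishes unless $x\in B_{2r}(x_0)$ when $s\le r$.

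For $s>r$ we do not integrate by parts. Since $\int\eta(F-\lambda)=0$ and $\lvert y-x_0\rvert<r$ on $\mathrm{supp}\,\eta$,
\[
\psi_s*(\eta(F-\lambda))(x)=\int\bigl(\psi_s(x-y)-\psi_s(x-x_0)\bigr)\,\eta(y)\,(F(y)-\lambda)\,dy ,
\]
whence $\lvert\psi_s*(\eta(F-\lambda))(x)\rvert\le C\,r\,s^{-m-1}\lVert Dv\rVert_{L^2_\beta}\lVert X\rVert_{L^2_{-\beta}}$, and this vanishes unless $\mathrm{dist}(x,\mathrm{supp}\,\eta)<s$. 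Taking the supremum over $s>r$ and $\psi\in\mathcal{F}$, the resulting function is dominated by $C\,r\,(\max\{r,\lvert x-x_0\rvert-r\})^{-m-1}\lVert Dv\rVert_{L^2_\beta}\lVert X\rVert_{L^2_{-\beta}}$, whose $L^1(\mathbb{R}^m)$ norm is $\le C\lVert Dv\rVert_{L^2_\beta}\lVert X\rVert_{L^2_{-\beta}}$.

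Combining the three contributions gives a pointwise bound for $M(\eta(F-\lambda))$ by the maximal-function product above plus $C\lvert\lambda\rvert\mathbbm{1}_{B_{2r}(x_0)}$ plus the large-scale term. Taking $L^1(\mathbb{R}^m)$ norms: the product term is treated exactly as at the end of the proof of Lemma~\ref{weightedHardy}, splitting $1=\lvert x_m\rvert^{\beta/2}\lvert x_m\rvert^{-\beta/2}$, applying H\"older and the weighted Hardy--Littlewood theorem (Lemma~\ref{ApHardylittlewood}) with the $A_p$ weights supplied by Lemma~\ref{Apformaximal}, and using $\lVert D\hat v\rVert_{L^2_\beta(\mathbb{R}^m)}\le C\lVert Dv\rVert_{L^2_\beta(B_r(x_0))}$ and $\lVert X\rVert_{L^2_{-\beta}(\mathbb{R}^m)}=\lVert X\rVert_{L^2_{-\beta}(B_r(x_0))}$; the second term contributes $C\lvert\lambda\rvert\lvert B_{2r}(x_0)\rvert\le C\lVert Dv\rVert_{L^2_\beta}\lVert X\rVert_{L^2_{-\beta}}$; the third was bounded above. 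This yields $\lVert M(\eta(F-\lambda))\rVert_{L^1(\mathbb{R}^m)}\le C\lVert Dv\rVert_{L^2_\beta(B_r(x_0))}\lVert X\rVert_{L^2_{-\beta}(B_r(x_0))}$ with $C=C(m,\beta,C_0)$, which is the assertion. The \emph{main obstacle} is the small-scale step: one must check that the compensated-compactness (div--curl) structure survives multiplication by the cutoff, i.e.\ that the extra term generated by $D\eta$ in the integration by parts is, at scales $s\le r$, controlled by the main term, so that the local estimate has the same shape as the global one in Lemma~\ref{weightedHardy}. The large-scale regime is comparatively routine, but it is the sole reason $\lambda$ appears in the statement: subtracting $\lambda$ produces the decay $O(r\,s^{-m-1})$ needed for $L^1$-summability over all of $\mathbb{R}^m$.
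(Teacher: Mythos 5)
Your proposal is correct and follows essentially the same route as the paper: integrate by parts via $\operatorname{div}X=0$, absorb the $D\eta$ term into the main one at scales $s\le r$ using $\lvert D\eta\rvert\le C_0/r\le C_0/s$, invoke the maximal-function bound inherited from Lemma \ref{weightedHardy}, and use the cancellation $\int\eta(Dv\cdot X-\lambda)\,dx=0$ to get the $O(rs^{-m-1})$ decay at scales $s>r$. The one substantive difference is that you explicitly extend $v$ to $\hat v\in W^{1,2}_{\beta}(\mathbb{R}^m)$ via a weighted Sobolev extension theorem; the paper instead chooses the averaging ball directly ($B_s(x)$ for $s\le r$, $B_{3r}(x)$ for $s>r$) and applies the weighted Poincar\'e inequality there, which tacitly requires $v$ to be defined on balls that can protrude beyond $B_r(x_0)$ -- your extension is a clean way of making that step rigorous (at the cost of invoking the weighted extension theorem and a scaling argument to keep the constant $r$-independent). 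Your organization by scale ($s\le r$ vs.\ $s>r$) rather than the paper's by location ($x\in B_{2r}(x_0)$ or not) is equivalent, since both contributions vanish automatically when $s\le r$ and $x\notin B_{2r}(x_0)$.
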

\begin{rem}
	The constant $\lambda$ is subtracted since it is a necessary condition that $\int_{\mathbb{R}^m}g\mathrm{d}x = 0$ for any $g \in \mathcal{H}^1(\mathbb{R}^m)$, see e.g. \cite{semmes1994primer}. 
\end{rem}
\begin{proof}
	Let $\psi \in C_0^{\infty}(B_1(0))$ with $\vert D\psi\vert \leq 1$ and define $\psi_s(x) = s^{-{m}}\psi(\frac{x}{s})$ for $s > 0$. 
	We have
	\begin{align*}
	\psi_s*(\eta ( D v \cdot X -\lambda) )  = \psi_s*(\eta D v \cdot X )  - \lambda\psi_s*\eta .
	\end{align*}	
	Let $\overline{v}\in \mathbb{R}$ denote a constant which will later be chosen to be the average $v$ over an appropriate set.  
    Since $X $ is weakly divergence free in $B_r(x_0)$, we see that for any $x \in \mathbb{R}^m$ we have
	\begin{align}\label{Hardyestintbypartcutoff}
	\psi_s*(\eta D v \cdot X  )(x) 
	& = s^{-m}\int_{B_s(x)\cap B_r(x_0)}\psi\left(\frac{x-y}{s}\right)\eta D v \cdot X   \mathrm{d}y\nonumber\\
	& =  s^{-m - 1}  \int_{B_s(x)\cap B_r(x_0)}D\psi\left(\frac{x-y}{s}\right) \eta (v-\overline{v}) \cdot X  \mathrm{d}y \nonumber\\
	& \quad -   s^{-m}   \int_{B_s(x)\cap B_r(x_0)} \psi\left(\frac{x-y}{s}\right)(v-\overline{v})  D\eta  \cdot X  \mathrm{d}y.
	\end{align} 
	For all $x \in \mathbb{R}^m$, $\overline{v}\in \mathbb{R}$ and $s>0$ we calculate 
	\begin{align}\label{Hardysameweightsobolevpoincestweightloc}
	\lefteqn{\left\vert  s^{-m - 1} \int_{B_s(x)\cap B_r(x_0)}D\psi\left(\frac{x-y}{s}\right) \eta (v-\overline{v}) \cdot X  \mathrm{d}y\right\vert} \qquad \nonumber\\
	& \leq  \frac{C  }{s\vert B_s(x)\vert }\int_{B_s(x)\cap B_r(x_0)}  \vert v-\overline{v} \vert \vert X \vert\mathrm{d}y.
	\end{align}	
	and, using that $\vert D\eta\vert \leq C_0r^{-1}$, we also find
	\begin{align}\label{Hardysameweightsobolevpoincestweightloccutoff}
	\lefteqn{ \left\vert s^{-m}   \int_{B_s(x)\cap B_r(x_0)} \psi\left(\frac{x-y}{s}\right)(v-\overline{v})  D\eta  \cdot X  \mathrm{d}y\right\vert } \qquad \nonumber\\
	& \leq  \frac{C}{\vert B_s(x)\vert}  \int_{B_s(x)\cap B_r(x_0)} \vert v-\overline{v}\vert   \vert D\eta\vert  \vert X \vert\mathrm{d}y \nonumber\\
	& \leq  \frac{C}{r\vert B_s(x)\vert}  \int_{B_s(x)\cap B_r(x_0)} \vert v-\overline{v}\vert    \vert X \vert\mathrm{d}y .
	\end{align} 
	We further observe that 
	\begin{equation}\label{Hardysameweightsobolevpoincestweightloclambdabound}
	|\lambda\psi_s*\eta(x)| = \left|\lambda s^{-m}\int_{B_s(x)\cap B_r(x_0)}\psi\left(\frac{x-y}{s}\right)\eta\mathrm{d}y\right| \leq C\vert \lambda \vert s^{-m}\vert B_s(x)\cap B_r(x_0)\vert . 
	\end{equation}
	We combine we combine \eqref{Hardyestintbypartcutoff} - \eqref{Hardysameweightsobolevpoincestweightloclambdabound}  to see that for all $x \in \mathbb{R}^m$, $\overline{v}\in \mathbb{R}$ and $s>0$ we have
	\begin{align}\label{Hardysameweightsobolevpoincestweightlocgeneral}
	\left\vert \psi_s*(\eta ( D v \cdot X -\lambda) )(x) \right\vert & \leq  \frac{C  }{ \vert B_s(x)\vert }(s^{-1}+r^{-1})\int_{B_s(x)\cap  B_r(x_0)}  \vert v-\overline{v} \vert \vert X \vert\mathrm{d}y +  C\vert \lambda \vert.  
	\end{align}
	
	To proceed we consider the cases $x \in B_{2r}(x_0)$ and $x \in \mathbb{R}^{m}\backslash B_{2r}(x_0)$ and recall that $\overline{v} \in \mathbb{R}$ is a constant we may choose.
	Suppose $x \in B_{2r}(x_0)$ and let $s>0$.  For $s \leq r$ we have $r^{-1}\leq s^{-1}$ and in this case we set $\overline{v} = \overline{v}_{B_s(x)}$ in \eqref{Hardysameweightsobolevpoincestweightlocgeneral}. If $s>r$ then  $B_s(x)\cap B_r(x_0) \subset B_r(x_0)\subset B_{3r}(x)$ and we set $\overline{v} = \overline{v}_{B_{3r}(x)}$ and observe that  $\vert B_s(x)\vert^{-1} \leq 3^m \vert B_{3r}(x)\vert^{-1}$. Then we see that for $x \in B_{2r}(x_0)$ we have
	\begin{align}\label{Hardysameweightsobolevpoincestweightlocnearsuppeta}
	\left\vert \psi_s*(\eta ( D v \cdot X -\lambda) )(x) \right\vert & \leq  \frac{C  }{t\vert B_t(x)\vert }\int_{B_t(x)}  \vert v-\overline{v}_{B_t(x)} \vert \vert X \vert\mathrm{d}y +  C\vert \lambda \vert,  
	\end{align} 
	where $t = s$ if $s \leq r$ and $t = 3r$ if $s > r$. 
	Now, choose $q$ as in the proof of Lemma \ref{weightedHardy} and $\gamma$ as in \eqref{Hardygammachoicerec}. Noting that $X $ and $Dv$ are extended by $0$ outside $B_r(x_0)$   we conclude from \eqref{Hardysameweightsobolevpoincestweightlocnearsuppeta}  and \eqref{Hardysameweightsobolevpoincestweight}-\eqref{Hardysameweightsobolevpoincestweightappsobpoincboundbymaximal}  that 
	\begin{align}\label{Hardysameweightsobolevpoincestweightlocnearsuppetbound}
	\left\vert \psi_s*(\eta ( D v \cdot X -\lambda) )(x) \right\vert & \leq  C  (M_{\gamma q}(\vert Dv\vert^{\frac{q(m-1)}{m}})(x))^{\frac{m}{q(m-1)}}(M_{-\gamma \frac{q}{q-1}}(\vert X \vert^{\frac{q}{q-1}})(x))^{\frac{q-1}{q}}\nonumber\\
& \quad + C \vert \lambda \vert  
	\end{align}
	for every $x \in B_{2r}(x_0)$ and $s>0$, where $M_{\gamma q}$ and $M_{-\gamma \frac{q}{q-1}}$ are the maximal functions as described in Lemma \ref{weightedHardy}.

	Now we consider $x \in \mathbb{R}^m\backslash B_{2r}(x_0)$. Then $\vert x - x_0\vert \geq 2r$.  We  observe that  when $\vert x - x_0\vert \geq r +s$
	we have  $\psi_s*(\eta ( D v \cdot X -\lambda) )(x) = 0$. Hence we only need to consider the case when $r<s$ and  $2r \leq \vert x - x_0\vert < r+s$ (if $s \leq r$ this condition is empty).   In this case we observe that $s^{-1} < 2\vert x-x_0\vert^{-1}$. Hence, similarly to the proof of Proposition 1.92 of \cite{semmes1994primer}, we take advantage of the fact that $\int_{\mathbb{R}^m}\eta ( D v \cdot X -\lambda)\mathrm{d}y = 0$, the Mean Value Theorem and the fact that $\vert D\psi\vert \leq 1$ to see that
	\begin{align}\label{Hardysameweightsobolevpoincestweightloccutoff2rleqmodxminusx0lerplussintgis0}
	\vert \psi_s*(\eta ( D v \cdot X -\lambda) )(x)\vert  & = s^{-m}\left\vert \int_{\mathbb{R}^m}\psi(\frac{x-y}{s})\eta( D v \cdot X -\lambda)\mathrm{d}y\right\vert \nonumber\\
	& = s^{-m}\left\vert\int_{B_r(x_0)}(\psi(\frac{x-y}{s}) - \psi(\frac{x-x_0}{s}))\eta( D v \cdot X -\lambda)\mathrm{d}y\right\vert \nonumber\\
	& \leq Cs^{-m}  s^{-1}\int_{B_r(x_0)}\vert x_0 - y\vert \vert  D v \cdot X -\lambda\vert \mathrm{d}y\nonumber\\
	& \leq C  r\vert x - x_0\vert^{-(m+1)}\int_{B_r(x_0)}\vert D v \cdot X \vert +\vert \lambda\vert \mathrm{d}y.
	\end{align}
	We apply H\"older's inequality to see that 
	\begin{align}\label{Hardysameweightsobolevpoincestweightloccutoff2rleqmodxminusx0lerplussintgis0holder}
	\lefteqn{\vert \psi_s*(\eta ( D v \cdot X -\lambda) )(x)\vert} \qquad \nonumber\\
	& \leq  C  r\vert x - x_0\vert^{-(m+1)}( \vert \vert Dv\vert\vert_{L^2_{\beta}(B_r(x_0);\mathbb{R}^m)}\vert \vert X \vert\vert_{L^2_{-\beta}(B_r(x_0);\mathbb{R}^m)} + \vert \lambda \vert r^m),
	\end{align}
	for every $x \in \mathbb{R}^m\backslash B_{2r}(x_0)$ and every $s>0$.
	Now we estimate the $\mathcal{H}^1(\mathbb{R}^m)$ norm of $\eta ( D v \cdot X -\lambda) $. We combine \eqref{Hardysameweightsobolevpoincestweightlocnearsuppetbound} and \eqref{Hardysameweightsobolevpoincestweightloccutoff2rleqmodxminusx0lerplussintgis0holder} to see that 
	\begin{align}\label{Hardyloccombineest}
	&\left\vert\left\vert \sup_{\psi \in \mathcal{F}}\sup_{s>0} \psi_s*(\eta ( D v \cdot X -\lambda) )\right\vert\right\vert_{L^1(\mathbb{R}^m)} \nonumber\\ 
	& =  \int_{B_{2r}(x_0)} \vert \sup_{\psi \in \mathcal{F}} \sup_{s>0}\psi_s*(\eta ( D v \cdot X -\lambda) )\vert  \mathrm{d}x \nonumber\\
	& \quad +  \int_{\mathbb{R}^m\backslash B_{2r}(x_0)} \vert \sup_{\psi \in \mathcal{F}}\sup_{s>0}\psi_s*(\eta ( D v \cdot X -\lambda) )\vert \mathrm{d}x\nonumber\\ 
	& \leq C\int_{B_{2r}(x_0)}   (M_{\gamma q}(\vert Dv\vert^{\frac{q(m-1)}{m}})(x))^{\frac{m}{q(m-1)}}(M_{-\gamma \frac{q}{q-1}}(\vert X \vert^{\frac{q}{q-1}})(x))^{\frac{q-1}{q}}  + \vert \lambda \vert\mathrm{d}x\nonumber\\
	& \quad + Cr \int_{\mathbb{R}^m\backslash B_{2r}(x_0)} \vert x - x_0\vert^{-(m+1)}( \vert \vert Dv\vert\vert_{L^2_{\beta}(B_r(x_0);\mathbb{R}^m)}\vert \vert X \vert\vert_{L^2_{-\beta}(B_r(x_0);\mathbb{R}^m)} + \vert \lambda \vert r^m) \mathrm{d}x\nonumber\\
	& \leq C\int_{\mathbb{R}^m}   (M_{\gamma q}(\vert Dv\vert^{\frac{q(m-1)}{m}})(x))^{\frac{m}{q(m-1)}}(M_{-\gamma \frac{q}{q-1}}(\vert X \vert^{\frac{q}{q-1}})(x))^{\frac{q-1}{q}}  \mathrm{d}x + Cr^m\vert \lambda \vert\nonumber\\
	& \quad + Cr( \vert \vert Dv\vert\vert_{L^2_{\beta}(B_r(x_0);\mathbb{R}^m)}\vert \vert X \vert\vert_{L^2_{-\beta}(B_r(x_0);\mathbb{R}^m)} + \vert \lambda \vert r^m) \int_{\mathbb{R}^m\backslash B_{2r}(x_0)} \vert x - x_0\vert^{-(m+1)}  \mathrm{d}x.
	\end{align}
	Now we observe that 
	\begin{align}\label{Hardylocouterintest}
	\int_{\mathbb{R}^m\backslash B_{2r}(x_0)} \vert x - x_0\vert^{-(m+1)}  \mathrm{d}x = 2^{-1}r^{-1}\int_{\mathbb{R}^m\backslash B_{1}(0)} \vert x  \vert^{-(m+1)} = Cr^{-1} \mathrm{d}x= Cr^{-1}
	\end{align}
	and, using H\"older's inequality, that
	\begin{align}\label{Hardyloclambdaest}
	\vert \lambda\vert \leq Cr^{-m}\int_{B_r(x_0)}\vert x_m\vert^{\frac{\beta}{2}}\vert x_m\vert^{-\frac{\beta}{2}}\vert Dv\vert \vert X \vert\mathrm{d}x 
	& \leq Cr^{-m}\vert \vert Dv\vert\vert_{L^2_{\beta}(B_r(x_0);\mathbb{R}^m)}\vert \vert X \vert\vert_{L^2_{-\beta}(B_r(x_0);\mathbb{R}^m) }.
	\end{align}	
	We use \eqref{estimateofHardynorm} - \eqref{Hardylittlewoodappliedtoconj} from the proof of Lemma \ref{weightedHardy}, together with the fact that $Dv$ and $X $ are extended by $0$ in $\mathbb{R}^m\backslash B_r(x_0)$ to see that   
	\begin{align}\label{weightedHardylocmaximalintest}
	\lefteqn{\int_{\mathbb{R}^m}    (M_{\gamma q}(\vert Dv\vert^{\frac{q(m-1)}{m}}))^{\frac{m}{q(m-1)}}(M_{-\gamma \frac{q}{q-1}}(\vert X \vert^{\frac{q}{q-1}}))^{\frac{q-1}{q}}\mathrm{d}x} \qquad \nonumber\\
	& \leq  C\vert \vert Dv\vert\vert_{L^2_{\beta}(B_r(x_0);\mathbb{R}^m)}\vert \vert X \vert\vert_{L^2_{-\beta}(B_r(x_0);\mathbb{R}^m)}.
	\end{align}
	We combine \eqref{Hardyloccombineest}-\eqref{weightedHardylocmaximalintest} to conclude the proof.	
\end{proof}

 In the following lemma we give conditions for the measures discussed in the proof of Lemma \ref{weightedHardy} to satisfy their respective $A_p$ conditions.
 \begin{lem}\label{Apformaximal}
 	Let $m \geq 3$, $q \in (2,\frac{2m}{m-1})$ and $\beta \in (-1,1)$.  Let $\mathrm{d}y$ be the Lebesgue measure on $\mathbb{R}^m$.
Define $\mathrm{d}\mu_1 = \vert y_m\vert^{ \gamma q}\mathrm{d}y$, and $\mathrm{d}\mu_{2} = \vert y_m\vert^{-\gamma \frac{q}{q-1}}\mathrm{d}y$.
Then
\begin{enumerate}
\item for $\gamma \in (-\frac{1}{q}, 1 -\frac{1}{q})$ the weight $\vert y_{m}\vert^{\gamma q}$ is in $A_q(\mathrm{d}y)$;
\item for $\gamma \in (-\frac{1}{q}, 1 - \frac{m+q}{mq})$ the weight  $\vert y_m\vert^{\gamma q}$ is in $A_{q\frac{m-1}{m}}(\mathrm{d}y)$;
\item for $\gamma  > (\beta + 1) \frac{m - 1}{2m} - \frac{1}{q}$, the weight $|y_m|^{\beta - \gamma q}$ is in $A_{2\frac{m}{q(m-1)}}(\mathrm{d}\mu_1)$; and
\item for $\gamma < \min\{\frac{1 + \beta}{2}, 1\} - \frac{1}{q}$, the weight $|y_m|^{ \gamma \frac{q}{q-1}-\beta}$ is in $A_{2\frac{q-1}{q}}(\mathrm{d}\mu_2)$.
\end{enumerate}
\end{lem}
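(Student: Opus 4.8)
The plan is to deduce all four statements from a single fact about power weights taken with respect to a power-weighted base measure, so the work splits into isolating and proving that fact and then carrying out four elementary substitutions.

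First I would record the elementary integral estimate: for $a > -1$ and a Euclidean ball $B_\rho(z)$ with $z = (z', z_m) \in \mathbb{R}^{m-1} \times \mathbb{R}$,
\[
\int_{B_\rho(z)} |y_m|^a \, \mathrm{d}y \asymp
\begin{cases}
\rho^{m+a}, & |z_m| \le 2\rho, \\
|z_m|^a \, \rho^m, & |z_m| \ge 2\rho,
\end{cases}
\]
with implied constants depending only on $m$ and $a$. This follows from Fubini's theorem in the $y_m$-direction together with the one-dimensional estimates for $\int |t|^a \, \mathrm{d}t$ over intervals, the hypothesis $a > -1$ being precisely what guarantees integrability near the hyperplane $\{y_m = 0\}$. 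In particular, for any $c > -1$ the measure $\mathrm{d}\mu = |y_m|^c \, \mathrm{d}y$ is doubling on $\mathbb{R}^m$.

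Next I would prove the key claim: \emph{if $c > -1$, $1 < p < \infty$, and $b \in \mathbb{R}$ satisfies $b + c > -1$ and $b < (p-1)(c+1)$, then the weight $|y_m|^b$ lies in $A_p(\mathrm{d}\mu)$, where $\mathrm{d}\mu = |y_m|^c \, \mathrm{d}y$.} Indeed, with $w = |y_m|^b$ the expression in \eqref{Apcond} for a ball $B = B_\rho(z)$ is
\[
\frac{1}{\mu(B)} \int_B |y_m|^{b+c} \, \mathrm{d}y \left( \frac{1}{\mu(B)} \int_B |y_m|^{c - \frac{b}{p-1}} \, \mathrm{d}y \right)^{p-1}, \qquad \mu(B) = \int_B |y_m|^c \, \mathrm{d}y,
\]
and the three exponents $c$, $b + c$ and $c - \frac{b}{p-1}$ are all strictly larger than $-1$ exactly under the stated hypotheses (the last being equivalent to $b < (p-1)(c+1)$ since $p - 1 > 0$), so the estimate above applies to each of the three integrals. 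When $|z_m| \ge 2\rho$ each integral equals a power of $|z_m|$ times $\rho^m$, and counting exponents shows the powers of $\rho$ and of $|z_m|$ cancel, leaving a bound depending only on $m, p, b, c$; when $|z_m| \le 2\rho$ each integral is comparable to a pure power of $\rho$, and again the exponents cancel to $0$. Hence the supremum over balls is finite. (The same computation shows these conditions are also necessary, but only sufficiency is needed here.)

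Finally I would apply the claim four times, each time checking that the stated range of $\gamma$ is exactly the pair of inequalities $b + c > -1$ and $b < (p-1)(c+1)$ and that the auxiliary requirement $c > -1$ holds. For (1): $\mathrm{d}\mu = \mathrm{d}y$ (so $c = 0$), $p = q$, $b = \gamma q$; then $b + c > -1$ and $b < (p-1)(c+1) = q - 1$ read $-\frac{1}{q} < \gamma < 1 - \frac{1}{q}$. For (2): $c = 0$, $p = q\frac{m-1}{m}$ (which is $> 1$ since $q > 2$ and $m \ge 3$) and $b = \gamma q$; the two conditions become $-\frac{1}{q} < \gamma < \frac{m-1}{m} - \frac{1}{q} = 1 - \frac{m+q}{mq}$. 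For (3): $\mathrm{d}\mu = \mathrm{d}\mu_1$, so $c = \gamma q$, $p = \frac{2m}{q(m-1)}$ (which is $> 1$ since $q < \frac{2m}{m-1}$) and $b = \beta - \gamma q$; here $b + c = \beta \in (-1, 1)$ automatically, and $b < (p-1)(c+1)$ unwinds, using $pq = \frac{2m}{m-1}$, to $\gamma > (\beta + 1)\frac{m-1}{2m} - \frac{1}{q}$, which also forces $c = \gamma q > -1$ because $(\beta+1)\frac{m-1}{2m} > 0$. For (4): $\mathrm{d}\mu = \mathrm{d}\mu_2$, so $c = -\gamma\frac{q}{q-1}$, $p = \frac{2(q-1)}{q}$ (which is $> 1$ since $q > 2$) and $b = \gamma\frac{q}{q-1} - \beta$; here $b + c = -\beta > -1$ automatically, the requirement $c > -1$ becomes $\gamma < 1 - \frac{1}{q}$, and $b < (p-1)(c+1)$ reduces, after clearing denominators and cancelling the factor $q - 1$, to $2\gamma q < q - 2 + \beta q$, i.e. $\gamma < \frac{1+\beta}{2} - \frac{1}{q}$; since $\beta < 1$ these two combine to $\gamma < \min\{\frac{1+\beta}{2}, 1\} - \frac{1}{q}$. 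The only real content beyond the elementary estimate is this exponent bookkeeping, so I expect no serious obstacle — the mild risk is simply keeping the four substitutions and their inequalities straight.
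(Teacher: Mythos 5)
Your proof is correct and uses the same underlying technique as the paper: split into Euclidean balls far from the hyperplane $\{y_m = 0\}$ (where $|y_m| \asymp |z_m|$ on the ball) and balls near it (where the integrals are comparable to pure powers of the radius), and observe that the exponents cancel in the $A_p$ quotient. The paper carries this out explicitly only for part (1) and asserts the other three are similar; you improve on the exposition by isolating a single reusable claim — $|y_m|^b \in A_p(|y_m|^c\,\mathrm{d}y)$ precisely when $c > -1$, $b + c > -1$, and $b < (p-1)(c+1)$ — and then reducing all four parts to exponent bookkeeping under substitution. This is the same computation organized more cleanly, and all four substitutions check out (in part (3) you correctly note that the stated lower bound on $\gamma$ already forces $c = \gamma q > -1$, and in part (4) you correctly track both the constraint $c > -1$ and the constraint $b < (p-1)(c+1)$ and combine them into the stated $\min$).
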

\begin{proof}

To establish the Lemma, we show \eqref{Apcond} is satisfied in each case.
In order to do this we consider balls with $\text{dist}(B_r(x);\mathbb{R}^{m-1}\times\{0\})  = \vert x_m\vert -r  \geq  r$ and balls with $0 \leq \vert x_m\vert < 2r$ separately. The subsequent calculations are similar for all four statements. We give the details for
the first one and leave the rest to the reader, that is, 
 we establish that $\vert y_{m}\vert^{\gamma q}$ is in $A_q(\mathrm{d}y)$.

 Suppose first that $\vert x_m\vert -r  \geq  r$. Then
\[
\frac{|x_m|}{2} \le |y_m| \le 2|x_m|
\]
in $B_r(x)$. Hence
\begin{align}\label{Apcondgammaqintballcond}
\frac{1}{\vert B_r(x)\vert}\int_{ B_r(x)}\vert y_m\vert^{\gamma q}\mathrm{d}y \left(\frac{1}{\vert  B_r(x)\vert}\int_{ B_r(x)}\vert y_m\vert^{-\gamma \frac{q}{q-1}} \mathrm{d}y\right)^{q-1}
& \le C |x_m|^{\gamma q} \left(|x_m|^{-\gamma \frac{q}{q-1}}\right)^{q - 1} \nonumber\\
& \le C.
\end{align}

Now suppose $0 \leq \vert x_m\vert < 2r$. If we write $x = (x', x_m)$, then $B_r(x) \subset B_{3r}(x', 0)$.
Then provided $\gamma q>-1$ and $-\gamma\frac{q}{q-1}>-1$, we have 
\begin{align}\label{Apcondgammaqbdryball}
\lefteqn{\frac{1}{\vert B_r(x)\vert}\int_{ B_r(x)}\vert y_m\vert^{\gamma q}\mathrm{d}y \left(\frac{1}{\vert  B_r(x)\vert}\int_{ B_r(x)}\vert y_m\vert^{-\gamma \frac{q}{q-1}} \mathrm{d}y\right)^{q-1}} \qquad \nonumber\\
& \le Cr^{-m} \int_{B_{3r}(x', 0)} |y_m|^{\gamma q} \mathrm{d}y \left(r^{-m} \int_{B_{3r}(x', 0)} |y_m|^{-\gamma \frac{q}{q - 1}} \mathrm{d}y\right)^{q - 1} \nonumber\\
& \le C r^{-q} \int_0^{3r} t^{\gamma q} \mathrm{d}t \left(\int_0^{3r} t^{-\gamma \frac{q}{q - 1}} \mathrm{d}t\right)^{q - 1} \le C.
\end{align} 
Hence we have verified \eqref{Apcond}.
\end{proof}

 \section{Caccioppoli-Type Inequality and Energy Decay}\label{caccdecay}
 With the Weighted Hardy estimate in Lemma \ref{weightedHardy} in hand, we may prove a Caccioppoli-Type inequality for stationary harmonic maps free boundary data. This estimate gives control of the re-scaled energy on a half-ball in terms of the mean squared oscillation and re-scaled energy on a half-ball of twice the radius. We will then use this lemma to show the re-scaled energy decays faster than implied by the monotonicity formula stated in Section \ref{mon}; we will show the decay we obtain is fast enough to imply H\"older continuity in Section \ref{epsilonreg}. 
 
 We need the following preliminary lemma with regard extending a $BMO(B_r(x_0))$-function to $BMO(\mathbb{R}^m)$ by multiplying by a cutoff function. The following lemma follows from well-known arguments, we give a proof based on e.g  \cite{MR1143435} Lemma 4.1 in order to elucidate the dependence of the constants on the cutoff function.
 \begin{lem}\label{bmoloc}
 	Suppose $v \in BMO(B_r(x^{*}))$ for some ball $B_r(x^{*})\subset\mathbb{R}^m$. Let $\eta \in C_0^{\infty}(B_\frac{3r}{4}(x_0))$ with $\eta \equiv 1$ in $B_{\frac{r}{2}}(x^{*})$, $0 \leq \eta \leq 1$ and $\vert D \eta\vert \leq C^{*}r^{-1}$. Then $w = \eta(v-\overline{v}_{B_r(x^{*})}) \in BMO(\mathbb{R}^m)$ and $[w]_{BMO(\mathbb{R}^m)}\leq C[v]_{BMO(B_r(x^{*}))}$ for a constant $C = C(m)(1+C^{*})$. 
 \end{lem}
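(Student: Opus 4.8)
The plan is to estimate the BMO seminorm of $w = \eta(v - \overline{v}_{B_r(x^*)})$ by considering an arbitrary ball $B = B_\rho(z) \subset \mathbb{R}^m$ and splitting into two regimes according to the size of $\rho$ relative to $r$. Throughout write $c = \overline{v}_{B_r(x^*)}$, so $w = \eta(v-c)$, and note that $w$ is supported in $B_{3r/4}(x^*)$.

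\emph{Small balls.} Suppose $\rho \le r$. On $B_\rho(z)$ we use the algebraic identity $w - \overline{w}_{B_\rho(z)} = \eta(v - \overline{v}_{B_\rho(z)\cap\{\eta\neq 0\}})$ plus lower-order terms, but it is cleaner to proceed directly: write $w(x) - w(y) = \eta(x)\big((v(x)-c) - (v(y)-c)\big) + (v(y)-c)(\eta(x)-\eta(y))$ for $x,y \in B_\rho(z)$, average in $y$, and take absolute values. Since $0\le\eta\le1$, the first term contributes at most $\frac{1}{|B_\rho(z)|}\int_{B_\rho(z)}|v - \overline{v}_{B_\rho(z)}|\,dy \le [v]_{BMO(B_r(x^*))}$ whenever $B_\rho(z)$ meets the support of $\eta$ (if it does not, $w\equiv0$ on $B_\rho(z)$ and there is nothing to prove); here one uses that $B_\rho(z)\subset B_{2r}(x^*)\subset B_r(x^*)$ is false in general, so one must instead observe that $B_\rho(z)$ meeting $\operatorname{supp}\eta\subset B_{3r/4}(x^*)$ together with $\rho\le r$ forces $B_\rho(z)\subset B_{2r}(x^*)$, and then cover $B_\rho(z)$ by a bounded number of balls of radius $\le r$ contained in $B_r(x^*)$ — or, more simply, note $B_\rho(z)\subset B_{7r/4}(x^*)$ and use the standard fact that the BMO seminorm computed over sub-balls of $B_r(x^*)$ controls averages over sub-balls of $B_{2r}(x^*)$ up to a dimensional constant via a chaining/doubling argument. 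The second term is bounded using $|\eta(x)-\eta(y)| \le C^*\rho/r \le C^*$ and $\frac{1}{|B_\rho(z)|}\int_{B_\rho(z)}|v(y)-c|\,dy$, which is $\le C[v]_{BMO(B_r(x^*))}$ again by the John–Nirenberg-type chaining estimate comparing $\overline{v}_{B_\rho(z)}$ (or $\overline{v}_{B_r(x^*)}=c$) over the scales between $\rho$ and $r$, picking up a factor $\log(r/\rho)$ — but the gradient factor $\rho/r$ beats the logarithm, so the product is bounded by $C(m)C^*[v]_{BMO(B_r(x^*))}$.

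\emph{Large balls.} Suppose $\rho > r$. Then $w$ is supported in $B_{3r/4}(x^*)\subset B_{2r}(z)$ if $B_\rho(z)$ meets $\operatorname{supp}\eta$ (else $w\equiv 0$ there). Estimating crudely, $\frac{1}{|B_\rho(z)|}\int_{B_\rho(z)}|w - \overline{w}_{B_\rho(z)}|\,dx \le \frac{2}{|B_\rho(z)|}\int_{B_\rho(z)}|w|\,dx \le \frac{C}{\rho^m}\int_{B_r(x^*)}|v-c|\,dx$, since $|\eta|\le 1$ and the integral is over the support of $\eta$. Because $\rho > r$, this is at most $\frac{C}{r^m}\int_{B_r(x^*)}|v-c|\,dx = C\,\frac{|B_r(x^*)|}{r^m}\cdot\frac{1}{|B_r(x^*)|}\int_{B_r(x^*)}|v - \overline{v}_{B_r(x^*)}|\,dx \le C(m)[v]_{BMO(B_r(x^*))}$.

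\emph{Conclusion and main obstacle.} Taking the supremum over all balls $B_\rho(z)$ and combining the two regimes gives $[w]_{BMO(\mathbb{R}^m)} \le C(m)(1+C^*)[v]_{BMO(B_r(x^*))}$, which is the claim. The only genuinely delicate point is the small-ball estimate: one must control $\frac{1}{|B_\rho(z)|}\int_{B_\rho(z)}|v - \overline{v}_{B_r(x^*)}|$ for balls $B_\rho(z)$ that are sub-balls of $B_r(x^*)$ but far smaller, which requires the standard telescoping inequality $|\overline{v}_{B_\rho(z)} - \overline{v}_{B_r(x^*)}| \le C(m)\log(r/\rho)[v]_{BMO(B_r(x^*))}$ and then exploiting the extra gradient decay $|D\eta|\le C^*/r$ to absorb the logarithm. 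This is exactly the argument in Lemma 4.1 of \cite{MR1143435}; the present formulation merely tracks that the constant depends on the cutoff only through the linear factor $1+C^*$.
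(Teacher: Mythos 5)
Your proposal follows the same basic decomposition as the paper, but resolves the key difficulty -- estimating the small-ball contribution from the $|D\eta|$ term -- with a different tool. The paper applies the John--Nirenberg inequality to get an $L^m$ bound $\left(\int_{B_{7r/8}(x^*)}|v|^m\,dy\right)^{1/m}\le Cr[v]_{BMO(B_r(x^*))}$ and then uses H\"older, which makes the $\rho$--dependence cancel exactly against $|B_\rho(z)|^{-1/m}$. You instead invoke the telescoping inequality $|\overline{v}_{B_\rho(z)}-\overline{v}_{B_r(x^*)}|\le C(m)\log(r/\rho)\,[v]_{BMO(B_r(x^*))}$ and absorb the logarithm with the gradient factor $\rho/r$ (since $t\log(1/t)$ is bounded). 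Both approaches are valid and both track the linear dependence on $C^*$ correctly; the paper's is slightly tidier since no logarithm appears.

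There is, however, one genuine flaw in your write-up. You split at $\rho=r$, but with that threshold a ``small'' ball $B_\rho(z)$ meeting $\operatorname{supp}\eta\subset B_{3r/4}(x^*)$ is not forced to lie in $B_r(x^*)$ -- it can reach out to $B_{11r/4}(x^*)$ -- and $v$ is only given as a $BMO$ function on $B_r(x^*)$, so expressions like $\frac{1}{|B_\rho(z)|}\int_{B_\rho(z)}|v-\overline{v}_{B_\rho(z)}|$ and the telescoping quantity $\overline{v}_{B_\rho(z)}$ are simply undefined. The suggested fix (``cover $B_\rho(z)\subset B_{2r}(x^*)$ by balls of radius $\le r$ contained in $B_r(x^*)$'') cannot work for the same reason: it needs $v$ outside $B_r(x^*)$. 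The correct fix, and the one the paper uses, is to lower the threshold: treat $\rho<r/(16\sqrt m)$ as ``small'' so that a ball touching $B_{3r/4}(x^*)$ is contained in $B_{7r/8}(x^*)\subset B_r(x^*)$, and fold all radii $\rho\ge r/(16\sqrt m)$ into your crude ``large ball'' estimate (which only needs $|B_\rho(z)|^{-1}\le Cr^{-m}$, not $\rho>r$). With that modification your argument is complete.
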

 \begin{proof}
 	We assume with no loss of generality that $\overline{v}_{B_r(x^{*})} = 0$. 
 
Observe that
 	\begin{align}\label{bmocutoffinitialest}
 	\frac{1}{\vert B_s(x)\vert}\int_{B_s(x)}\vert w - \overline{w}_{B_s(x)}\vert\mathrm{d}y \leq \frac{2}{\vert B_s(x)\vert}\int_{B_s(x)\cap B_r(x^{*})}\vert w\vert\mathrm{d}y.\nonumber\\
 	\end{align}
    If $s \geq \frac{r}{16\sqrt{m}}$ then it follows from \eqref{bmocutoffinitialest} that
 	\begin{align}\label{bmocutofflargesbound}
 	\frac{1}{\vert B_s(x)\vert}\int_{B_s(x)}\vert w - \overline{w}_{B_s(x)}\vert\mathrm{d}y & \leq \frac{2  }{\vert B_s(x)\vert}\int_{B_s(x)\cap B_r(x^{*})}\vert w\vert\mathrm{d}y\nonumber\\
    &\leq \frac{C}{\vert B_r(x^{*})\vert}\int_{B_r(x^{*})}\vert v \vert\mathrm{d}y \nonumber\\
 	& \leq C[v]_{BMO(B_r(x^{*}))}.  
 	\end{align}

 	To proceed, we consider the cases $x \in B_{\frac{13}{16}r}(x^{*})$ and $x \in \mathbb{R}^m\backslash B_{\frac{13}{16}r}(x^{*})$.  	
 	First suppose $x \in \mathbb{R}^m\backslash B_{\frac{13}{16}r}(x^{*})$. If $s \geq \frac{r}{16\sqrt{m}}$ then \eqref{bmocutofflargesbound} holds and if $s < \frac{r}{16\sqrt{m}}$ then $B_s(x)\cap B_{\frac{3r}{4}}(x^{*}) = \emptyset$ and it follows from \eqref{bmocutoffinitialest} and the fact that $\eta \in C_0^{\infty}(B_{\frac{3r}{4}}(x^{*});\mathbb{R})$ that 
 	\begin{align}\label{bmocutoffnointersectbound}
 	\frac{1}{\vert B_s(x)\vert}\int_{B_s(x)}\vert w - \overline{w}_{B_s(x)}\vert\mathrm{d}y  = 0. 
 	\end{align}

 	Now suppose $x \in B_{\frac{13}{16}r}(x^{*})$. If $s \geq \frac{r}{16\sqrt{m}}$ then \eqref{bmocutofflargesbound} still holds.  
 	Hence we consider $0 \leq s <\frac{r}{16\sqrt{m}}$ and note that in this case $B_s(x)\subset B_{\frac{7}{8}r}(x^{*})$. Observe that 
 	\begin{align*}
 	w - \overline{w}_{B_s(x)} = \eta v -  \overline{\eta v}_{B_s(x)} = \eta(v - \overline{v}_{B_s(x)}) + \vert B_s(x)\vert^{-1}\int_{B_s(x)}v(y)(\eta(z)-\eta(y))\mathrm{d}y.
 	\end{align*}
    Consequently, using the fact that $\vert D\eta\vert \leq C^{*}r^{-1}$, we see that
 	\begin{align}\label{johnnirenpre}
 	&\frac{1}{\vert B_s(x)\vert}\int_{B_s(x)}\vert w - \overline{w}_{B_s(x)}\vert\mathrm{d}z \nonumber\\
 		& \leq [v]_{BMO(B_r(x^{*}))}   +\frac{1}{\vert B_s(x)\vert}\int_{B_s(x)}\left\vert \vert B_s(x)\vert^{-1}\int_{B_s(x)}v(y)(\eta(z)-\eta(y))\mathrm{d}y\right\vert\mathrm{d}z\nonumber\\
 	& \leq [v]_{BMO(B_r(x^{*}))} +\frac{2C^{*}s}{r\vert B_s(x)\vert }\int_{B_s(x)} \vert v\vert \mathrm{d}y.
 	\end{align}
Furthermore, we can use H\"older's inequality to estimate
\begin{equation} \label{L^m}
|B_s(x)|^{-1} \int_{B_s(x)} |v| \mathrm{d}y \le  \left(|B_s(x)|^{-1}\int_{B_{7r/8}(x^*)} |v|^m \mathrm{d}y\right)^{\frac{1}{m}}.
\end{equation}
It follows from the John-Nirenberg inequality, see \cite{giaquinta2013introduction} Corollary 6.22, that
\[
\left(\int_{B_{7r/8}(x^*)} |v|^m \mathrm{d}y\right)^{\frac{1}{m}} \le Cr[v]_{BMO(B_r(x^*))}.
\]
Together with \eqref{johnnirenpre} and \eqref{L^m}, this yields the desired estimate.
\end{proof}

Now we prove our Caccioppoli-type estimate. 
 \begin{lem}\label{cacctypeineq}
 There exists a constant $C>0$ such that the following holds.	Suppose $v \in W^{1,2}_{\beta}(\mathscr{U};\mathbb{S}^{n-1})$ is weakly stationary harmonic with respect to
free boundary data. Let $\tilde{v} \in W^{1,2}_{\beta}(\mathscr{V};\mathbb{S}^{n-1})$ denote the even reflection of $v$ in $\mathbb{R}^{m-1}\times\{0\}$. Then for $B^+_r(x_0)\subset\mathscr{U}$ with $(x_0)_m = 0$, $r>0$ and $B^+_{2r}(x_0)\subset\mathscr{U}$  and any $\delta_0>0$  we have
 	\begin{align}\label{cacctypeineqest}
 	\left(\frac{r}{2}\right)^{2-m-\beta}\int_{B^+_\frac{r}{2}(x_0)} x_m^{\beta} \vert Dv\vert^2\mathrm{d}x&  \leq (C [\tilde{v}]_{BMO(B_r(x_0))} + \delta_0)r^{2-m-\beta}\int_{B^+  _r(x_0)} x_m^{\beta} \vert Dv\vert^2\mathrm{d}x \nonumber\\
 	& \quad +  C\delta_0^{-1}r^{-(m+\beta)}\int_{B^+_r(x_0)} x_m^{\beta} \vert   v - \overline{v}_{B^+_r(x_0),\beta}\vert^2\mathrm{d}x.
 	\end{align}
 \end{lem}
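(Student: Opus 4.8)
The plan is to reduce everything to the even reflection $\tilde v$ of $v$ across $\mathbb{R}^{m-1}\times\{0\}$, and then to run a Caccioppoli argument in which the only dangerous term is disposed of by weighted $\mathcal{H}^1$--$BMO$ duality, using the conservation law of Lemma~\ref{conslawElequiv} together with the local weighted Hardy estimate of Lemma~\ref{weightedHardyloc}. We may take $x_0=0$; we may assume $[\tilde v]_{BMO(B_r(0))}<\infty$ (otherwise the inequality is vacuous), and, since for all $\delta_0$ larger than a fixed constant \eqref{cacctypeineqest} already follows from the monotonicity inequality of Section~\ref{mon} (Corollary~\ref{monotonicity_inequality_Euclidean}), we may also assume $\delta_0$ is small. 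By Lemma~\ref{conslawElequiv} and the reflection conventions of Section~\ref{metriccoord}, the reflected fields $\tilde X_{ab}$ are weakly divergence free on $B_{2r}(0)$, equivalently $\tilde v\in W^{1,2}_{\beta}(\mathscr V;\mathbb S^{n-1})$ satisfies \eqref{wELeqnsphere} on $B_{2r}(0)$ with the Lipschitz metric $\tilde g$; moreover the sphere identity $|x_m|^{\beta}\sqrt{\det\tilde g}\,|D\tilde v|_{\tilde g}^2=\sum_{a,b}\tilde v^a\partial_i\tilde v^b\,\tilde X^i_{ab}$ holds a.e.\ there, with $\|\tilde X_{ab}\|_{L^2_{-\beta}(B_r(0))}\le C\|D\tilde v\|_{L^2_{\beta}(B_r(0))}$ from \eqref{divfreeveccomp} and \eqref{metricchartbounds}. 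Fix nested cutoffs $\eta\prec\tilde\eta$ with $\eta\equiv 1$ on $B_{r/2}(0)$, $\tilde\eta\equiv 1$ on $\mathrm{supp}\,\eta$, $\mathrm{supp}\,\tilde\eta\subset B_r(0)$, and $|D\eta|+|D\tilde\eta|\le C/r$, and set $c=\overline v_{B_r^+(0),\beta}=\overline{\tilde v}_{B_r(0),\beta}$, so $|c|\le 1$.

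First I would test \eqref{wELeqnsphere} for $\tilde v$ (an admissible test function by Lemma~\ref{test_functions}) with $\psi=\eta^2(\tilde v-c)$. Expanding and using $|\tilde v|=1$ (so that $1-\langle\tilde v,\tilde v-c\rangle=\langle c,\tilde v\rangle$), a short computation gives
\[
\int\eta^2|x_m|^{\beta}\sqrt{\det\tilde g}\,|D\tilde v|_{\tilde g}^2\,dx \;=\; A+B,
\]
with $A=\int\eta^2|x_m|^{\beta}\sqrt{\det\tilde g}\,|D\tilde v|_{\tilde g}^2\,(1-\langle c,\tilde v\rangle)\,dx$ and $B=-2\int|x_m|^{\beta}\sqrt{\det\tilde g}\,\eta\,\partial_i\eta\,\tilde g^{ij}\langle\tilde v-c,\partial_j\tilde v\rangle\,dx$. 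Young's inequality and \eqref{metricchartbounds} give, for any $\varepsilon>0$, the bound $|B|\le\varepsilon\int_{B_r(0)}\eta^2 x_m^{\beta}|D\tilde v|^2\,dx+C\varepsilon^{-1}r^{-2}\int_{B_r(0)}|x_m|^{\beta}|\tilde v-c|^2\,dx$, where the last integral equals $2\int_{B_r^+(0)}x_m^{\beta}|v-c|^2\,dx$.

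Then the work is to estimate $A$. Write $f:=1-\langle c,\tilde v\rangle\in[0,2]$ and split $f=(f-\bar f)+\bar f$ with $\bar f:=\overline f_{B_r(0),\beta}=1-|c|^2=|B_r(0)|_\beta^{-1}\int_{B_r(0)}|x_m|^{\beta}|\tilde v-c|^2\ge 0$; since $|\tilde v-c|\le 2$ and $|x_m|^{\beta}\in A_2$ (so that weighted and unweighted mean oscillations are comparable), $\bar f\le C[\tilde v]_{BMO(B_r(0))}$, whence the $\bar f$-part of $A$ is at most $C[\tilde v]_{BMO(B_r(0))}\int_{B_r(0)}x_m^{\beta}|D\tilde v|^2$. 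For the remaining part, on $\mathrm{supp}\,\eta$ one has $f-\bar f=F:=\tilde\eta(f-\bar f)=-\tilde\eta\langle c,\tilde v-c\rangle$; substituting the sphere identity and applying Lemma~\ref{weightedHardyloc} with the cutoff $\eta^2$ we write $\eta^2\,D\tilde v^b\!\cdot\!\tilde X_{ab}=g_{ab}+\eta^2\lambda_{ab}$, where $g_{ab}\in\mathcal H^1(\mathbb R^m)$ with $\|g_{ab}\|_{\mathcal H^1}\le C\|D\tilde v\|_{L^2_{\beta}(B_r(0))}\|\tilde X_{ab}\|_{L^2_{-\beta}(B_r(0))}\le C\int_{B_r(0)}x_m^{\beta}|D\tilde v|^2$ and $|\lambda_{ab}|\le Cr^{-m}\int_{B_r(0)}x_m^{\beta}|D\tilde v|^2$. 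Hence
\[
\int\eta^2|x_m|^{\beta}\sqrt{\det\tilde g}\,|D\tilde v|_{\tilde g}^2\,(f-\bar f)\,dx \;=\; \sum_{a,b}\int F\,\tilde v^a\,g_{ab}\,dx \;+\;\sum_{a,b}\lambda_{ab}\int F\,\eta^2\,\tilde v^a\,dx .
\]
In the first sum, $F\tilde v^a=\tilde\eta\,[(f-\bar f)\tilde v^a]$, and $(f-\bar f)\tilde v^a$ is bounded on $B_r(0)$ with $[\,\cdot\,]_{BMO(B_r(0))}\le C[\tilde v]_{BMO(B_r(0))}$ (product of two bounded functions, one affine in $\tilde v$, the other $\tilde v^a$ itself); Lemma~\ref{bmoloc} then places its truncation in $BMO(\mathbb R^m)$ with comparable seminorm, and since each $g_{ab}$ has vanishing integral and is supported where $\tilde\eta\equiv 1$, $\mathcal H^1$--$BMO$ duality yields a bound $\le C[\tilde v]_{BMO(B_r(0))}\int_{B_r(0)}x_m^{\beta}|D\tilde v|^2$. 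For the second sum, $|\int F\eta^2\tilde v^a|\le\int_{B_r(0)}|\langle c,\tilde v-c\rangle|\le\int_{B_r(0)}|\tilde v-c|\le Cr^m[\tilde v]_{BMO(B_r(0))}$ — using once more that $|x_m|^{\beta}\in A_2$, the John--Nirenberg inequality, and $c=\overline{\tilde v}_{B_r(0),\beta}$ to compare weighted and unweighted averages — so this term is also $\le C[\tilde v]_{BMO(B_r(0))}\int_{B_r(0)}x_m^{\beta}|D\tilde v|^2$.

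Combining, $\int\eta^2|x_m|^{\beta}\sqrt{\det\tilde g}\,|D\tilde v|_{\tilde g}^2\le C[\tilde v]_{BMO(B_r(0))}\int_{B_r(0)}x_m^{\beta}|D\tilde v|^2+\varepsilon\int_{B_r(0)}\eta^2 x_m^{\beta}|D\tilde v|^2+C\varepsilon^{-1}r^{-2}\int_{B_r(0)}|x_m|^{\beta}|\tilde v-c|^2$; choosing $\varepsilon$ a sufficiently small fixed constant (and using \eqref{metricchartbounds} to compare the left side with $\int\eta^2 x_m^{\beta}|D\tilde v|^2$) we absorb the $\varepsilon$-term, then use $\eta\equiv 1$ on $B_{r/2}(0)$ and the factor-two relation between reflected integrals and half-ball integrals, and finally multiply by $(r/2)^{2-m-\beta}$; together with the monotonicity inequality to cover the range of large $\delta_0$, this yields \eqref{cacctypeineqest}. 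I expect the main obstacle to be the third step: arranging the integration by parts so that the factor multiplying the divergence-free field $\tilde X_{ab}$ is genuinely affine in $\tilde v$ (hence of small $BMO$ seminorm after truncation), rather than a product of two oscillating functions — this is exactly where the constraint $|\tilde v|=1$ is essential — together with the bookkeeping needed to pass between weighted and unweighted averages of $\tilde v$ on $B_r(0)$ and to absorb the mean-value corrections $\lambda_{ab}$ forced by the use of the local, rather than global, weighted Hardy estimate.
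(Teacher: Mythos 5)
Your proposal is correct, but it reaches the Caccioppoli bound by a genuinely different route than the paper. The paper works directly from Lemma~\ref{conslawElequiv}: it uses the identity $|D\tilde v|_{\tilde g}^2=\tfrac12\tilde g^{ij}(\tilde v^a\partial_i\tilde v^b-\tilde v^b\partial_i\tilde v^a)(\tilde v^a\partial_j\tilde v^b-\tilde v^b\partial_j\tilde v^a)$, tests the divergence--free vector fields $\tilde X_{ab}$ against the antisymmetric combinations $\tilde v^aW^b-\tilde v^bW^a$ with $W=\eta^2(\tilde v-\overline{\tilde v})$, absorbs the $D\eta$ term by Young's inequality (which is where the free parameter $\delta_0$ enters), and applies $\mathcal H^1$--$BMO$ duality to the remaining term $\sum_{a,b}w^b\cdot\eta(D\tilde v^a\cdot\tilde X_{ab}-\lambda_{ab})$ via Lemmas~\ref{weightedHardyloc} and~\ref{bmoloc}, with $w^b=\eta(\tilde v^b-\overline{\tilde v}^b)$ as the $BMO$ factor. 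You instead test the scalar Euler--Lagrange equation~\eqref{wELeqnsphere} (suitably reflected -- a fact the paper never makes explicit but which your parity argument justifies, since test functions in $C_0^\infty(\mathscr U)$ are allowed to be nonzero on $\{x_m=0\}$) with $\psi=\eta^2(\tilde v-c)$, use $|\tilde v|=1$ to convert the nonlinearity into the factor $1-\langle c,\tilde v\rangle$, split this factor into its weighted mean $\bar f=1-|c|^2$ plus the oscillation $-\langle c,\tilde v-c\rangle$, bound the mean part directly by $[\tilde v]_{BMO}$ (using $\bar f=|B_r|_\beta^{-1}\int|x_m|^\beta|\tilde v-c|^2$ and the $A_\infty$ comparability of weighted and unweighted mean oscillations), and only then bring in the div--curl identity and Lemma~\ref{weightedHardyloc} for the oscillating part, where the $BMO$ factor is the product $\langle c,\tilde v-c\rangle\tilde v^a$. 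Two small points: (i) when you pair $g_{ab}$ with $\tilde\eta(f-\bar f)\tilde v^a$, Lemma~\ref{bmoloc} strictly requires subtracting the unweighted average of $(f-\bar f)\tilde v^a$ inside the cutoff, but this costs nothing since $\int g_{ab}=0$ and $\tilde\eta\equiv1$ on $\mathrm{supp}\,g_{ab}$, so the constant shift is harmless; (ii) you invoke the monotonicity formula to dispose of large $\delta_0$, but this is superfluous and in fact undesirable (the monotonicity corollaries carry a radius restriction absent from the lemma as stated) -- the trivial inclusion $B_{r/2}^+\subset B_r^+$ together with $(r/2)^{2-m-\beta}=2^{m+\beta-2}r^{2-m-\beta}$ already covers $\delta_0\ge 2^{m+\beta-2}$. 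Your version buys a cleaner, slightly stronger statement: because your Young term carries $\eta^2$ it can be absorbed into the left--hand side with a fixed small constant, making the $\delta_0$-parametrization artificial; the paper's version keeps $\delta_0$ because the left--hand side of its starting inequality~\eqref{caccinitialenest} is the plain energy on $B_{r/2}$ rather than the cutoff energy. Conversely, the paper's antisymmetric test functions avoid the intermediate splitting of $1-\langle c,\tilde v\rangle$ and the need to track products of two $L^\infty\cap BMO$ factors, so it requires less fine-grained bookkeeping around the weighted averages. Both proofs hinge on the same two ingredients: the sphere div--curl structure and the weighted $\mathcal H^1$--$BMO$ machinery of Lemma~\ref{weightedHardyloc}.
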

 
 \begin{proof}
 The proof is analogous to the case for stationary harmonic maps, see for instance Lemma 3.8 of \cite{moser2005partial}. 	Let $\eta \in C_0^{\infty}(B_{\frac{3}{4}r}(x_0);\mathbb{R})$ be a smooth cutoff function with $\eta \equiv 1$ in $ B_{\frac{r}{2}}(x_0)$, $0 \leq \eta \leq 1$ and $\vert D\eta\vert \leq \frac{C}{r}$. Henceforth we abbreviate the notation $\overline{\tilde{v}}_{B_r(x_0),\beta}$ to $\overline{\tilde{v}}$ and $\overline{v}_{B^+_r(x_0),\beta}$ to $\overline{v}$. Let $\tilde{g}$ be the reflection of $g$.
It follows from \eqref{metricchartbounds} and the fact that 
 	$\vert \tilde{v}\vert^2 = 1$ almost everywhere that 
 	\begin{align}\label{energydensitywedgebound}
  \tilde{g}^{ij}\langle \partial_i\tilde{v},\partial_j\tilde{v}\rangle   = \frac{1}{2 }\tilde{g}^{ij}(\tilde{v}^a\partial_i\tilde{v}^b -  \tilde{v}^b\partial_i\tilde{v}^a)(\tilde{v}^a\partial_j\tilde{v}^b -  \tilde{v}^b\partial_j\tilde{v}^a).
 	\end{align}
 	Now let $w = \eta(\tilde{v} - \overline{\tilde{v}}) \in W^{1,2}_{\beta,0}(B_r(x_0);\mathbb{R}^n)$.   We calculate 
 	\begin{align}\label{cutoffvecfieldcomp}
 	\partial_i( \tilde{v}^aw^b  - \tilde{v}^bw^a ) & =  \partial_i\eta(\tilde{v}^a(\tilde{v}-\overline{\tilde{v}})^b - (\tilde{v}-\overline{\tilde{v}})^a\tilde{v}^b)\nonumber\\
 	& \quad + \eta (\tilde{v}^a\partial_i\tilde{v}^b - \tilde{v}^b\partial_i\tilde{v}^a) \nonumber\\
 	& \quad +   \partial_i\tilde{v}^aw^b - \partial_i\tilde{v}^bw^a 
 	\end{align}
  for $a,b = 1,\ldots,n$.  We observe that since $\tilde{v},w \in L^{\infty}(\mathscr{V};\mathbb{R}^n)$ it follows that $ \tilde{v}^aw^b - \tilde{v}^bw^a  \in W^{1,2}_{\beta}(B_r(x_0);\mathbb{R})$. Moreover, since the support of $\eta$ is a compact subset of $B_r(x_0)$, we see that $\tilde{v}^aw^b - \tilde{v}^bw^a  \in W^{1,2}_{\beta,0}(B_r(x_0);\mathbb{R})$. We observe that the preceding discussion for $w$ also holds true for $W = \eta w$, including \eqref{cutoffvecfieldcomp} which holds replacing $\eta$ with $\eta^2$, $\partial_i\eta$ with $2\eta\partial_i\eta$ and $w$ with $W$.
  
   Lemma \ref{conslawElequiv} implies that the vector fields $\tilde{X}_{ab} \in L^2_{-\beta}(\mathscr{V};\mathbb{R}^n)$ given in components by $\tilde{X}_{ab}^i = \vert x_m\vert^{\beta}\sqrt{\text{det}(g(x',\vert x_m\vert))}\tilde{g}^{ij} (\tilde{v}^a \partial_j\tilde{v}^b   -  \tilde{v}^b \partial_j\tilde{v}^a)$ are weakly divergence free in $\mathscr{V}$. We use this fact, together with the fact that $\tilde{v}^aW^b - \tilde{v}^bW^a  \in W^{1,2}_{\beta,0}(B_r(x_0);\mathbb{R})$ and \eqref{energydensitywedgebound} and \eqref{cutoffvecfieldcomp}, to see that 
 	\begin{align}\label{caccinitialenest}
 	&\int_{B_\frac{r}{2}(x_0)} \vert x_m\vert^{\beta} \tilde{g}^{ij}\langle \partial_i\tilde{v},\partial_j\tilde{v}\rangle\sqrt{\text{det}(g(x',\vert x_m\vert))}\mathrm{d}x\nonumber\\
 	& \leq \frac{1}{2} \int_{B_r(x_0)}\eta^2   \vert x_m\vert^{\beta}   \tilde{g}^{ij}(\tilde{v}^a\partial_i\tilde{v}^b -  \tilde{v}^b\partial_i\tilde{v}^a)(\tilde{v}^a\partial_j\tilde{v}^b -  \tilde{v}^b\partial_j\tilde{v}^a)\sqrt{\text{det}(g(x',\vert x_m\vert))}\mathrm{d}x  \nonumber\\
 	& =  \frac{1}{2} \int_{B_r(x_0)}\eta^2     (\tilde{v}^a\partial_i\tilde{v}^b -  \tilde{v}^b\partial_i\tilde{v}^a)  \tilde{X}_{ab}^i\mathrm{d}x  \nonumber\\
 	& = -      \int_{B_r(x_0)}   \partial_i\eta(\tilde{v}^a(\tilde{v}-\overline{\tilde{v}})^b - (\tilde{v}-\overline{\tilde{v}})^a\tilde{v}^b)\eta  \tilde{X}_{ab}^i\mathrm{d}x\nonumber\\
 	& \quad -   \frac{1}{2} \int_{B_r(x_0)}  (\partial_i\tilde{v}^aw^b - \partial_i\tilde{v}^bw^a)\eta \tilde{X}_{ab}^i\mathrm{d}x.
 	\end{align}
We apply Young's inequality to see that 
 	\begin{align}\label{caccyoungterm}
 	&  -   \int_{B_r(x_0)}      \partial_i\eta(\tilde{v}^a(\tilde{v}-\overline{\tilde{v}})^b - (\tilde{v}-\overline{\tilde{v}})^a\tilde{v}^b)\eta \tilde{X}_{ab}^i\mathrm{d}x \nonumber\\
 	& \leq C \int_{B_r(x_0)}  \vert x_m\vert^{\beta}   \vert D\eta\vert \vert\tilde{v}-\overline{\tilde{v}}\vert \vert D\tilde{v}\vert\mathrm{d}x\nonumber\\
 	& \leq \delta_0\int_{B^+_r(x_0)}     x_m^{\beta}  \vert Dv\vert^2  \mathrm{d}x +  \frac{C}{\delta_0 r^2}\int_{B^+_r(x_0)}   x_m^{\beta}    \vert  v-\overline{v} \vert^2 \mathrm{d}x,
 	\end{align}
 	where we have also used the fact that 
 	\begin{equation*}
    \int_{B_r(x_0)}   \vert x_m\vert^{\beta}    \vert  \tilde{v}-\overline{\tilde{v}} \vert^2 \mathrm{d}x \leq C\int_{B^+_r(x_0)}   x_m^{\beta}    \vert  v-\overline{v} \vert^2 \mathrm{d}x.
 	\end{equation*} 
   It follows from Lemma \ref{bmoloc} that   $w^a \in BMO(\mathbb{R}^m)$ and $[w^{a}]_{BMO(\mathbb{R}^m)} \leq C_0 [\tilde{v}^{a}]_{BMO(B_r(x_0))}$ for $a = 1,\ldots,m$.  We now apply Lemma \ref{weightedHardyloc} which implies that for each $a,b = 1,\ldots,n$ we have $\eta( D\tilde{v}^a \cdot   \tilde{X}_{ab} - \lambda_{ab}) \in\mathcal{H}^1(\mathbb{R}^m)$ where we have extended $\tilde{X}$ and $D\tilde{v}$ by $0$ to $\mathbb{R}^m$ and  $\lambda_{ab} = \left(\int_{\mathbb{R}^m}\eta\mathrm{d}x\right)^{-1}\int_{\mathbb{R}^m}\eta Dv^a\cdot \tilde{X}_{ab}\mathrm{d}x$. We further note that  $\eta( D\tilde{v}^a \cdot   \tilde{X}_{ab} - \lambda_{ab}) \in L^1(\mathbb{R}^m)$ and $w^a \in L^\infty(\mathbb{R}^m)$ for $a,b = 1,\ldots,m$. We may therefore   apply the duality of $\mathcal{H}^1(\mathbb{R}^m)$ and $BMO(\mathbb{R}^m)$, see \cite{moser2005partial} Lemma 2.3, together with Theorem \ref{weightedHardyloc} and the fact that  $\tilde{X}_{ab}^{i} = -\tilde{X}_{ba}^{i}$, to see that 
 	\begin{align}\label{caccHardyapp}
 	&-   \int_{B_r(x_0)}   (\partial_i\tilde{v}^aw^b - \partial_i\tilde{v}^bw^a)\eta  \tilde{X}^{i}_{ab}\mathrm{d}x \nonumber\\
 	& = -  2\int_{B_r(x_0)} w^bD\tilde{v}^a \cdot \eta  \tilde{X}_{ab}\mathrm{d}x   \nonumber\\
 	& = -  2\int_{B_r(x_0)} w^b\eta( D\tilde{v}^a \cdot   \tilde{X}_{ab} - \lambda_{ab})\mathrm{d}x  -    2\int_{B_r(x_0)} w^b \eta \lambda_{ab} \mathrm{d}x \nonumber\\
 	& \leq  2[w^b]_{BMO(\mathbb{R}^m)} \vert\vert \eta( D\tilde{v}^a \cdot   \tilde{X}_{ab} - \lambda_{ab})\vert\vert_{\mathcal{H}^1(\mathbb{R}^m)} +2 \vert \lambda_{ab}\vert\int_{B_r(x_0)}\vert \tilde{v}^b - \overline{\tilde{v}^b}\vert\mathrm{d}x\nonumber\\
 	& \leq C [w^b]_{BMO(\mathbb{R}^m)}\vert\vert D\tilde{v}^a\vert\vert_{L^2_{\beta}(B_r(x_0);\mathbb{R}^{m})}\vert\vert \tilde{X}_{ab}\vert\vert_{L^2_{-\beta}(B_r(x_0);\mathbb{R}^m)}\nonumber\\
 	& \quad + C \vert\vert D\tilde{v}^a\vert\vert_{L^2_{\beta}(B_r(x_0);\mathbb{R}^{m})}\vert\vert \tilde{X}_{ab}\vert\vert_{L^2_{-\beta}(B_r(x_0);\mathbb{R}^m)}\frac{1}{\vert B_r(x_0)\vert}\int_{B_r(x_0)}\vert \tilde{v}^b - \overline{\tilde{v}^b}\vert\mathrm{d}x\nonumber\\ 
 	& \leq C [\tilde{v}^b]_{BMO(B_r(x_0))}\vert\vert D\tilde{v}^a\vert\vert_{L^2_{\beta}(B_r(x_0);\mathbb{R}^{m})}\vert\vert \tilde{X}_{ab}\vert\vert_{L^2_{-\beta}(B_r(x_0);\mathbb{R}^m)}\nonumber\\
 	& \leq  C [\tilde{v}]_{BMO(B_r(x_0)))}\int_{B^+_r(x_0)} x_m^{\beta}\vert Dv\vert^2\mathrm{d}x.
 	\end{align}
 Together \eqref{caccinitialenest}, \eqref{caccyoungterm} and \eqref{caccHardyapp} yield  \eqref{cacctypeineqest} as required.
 \end{proof}
 
 We now combine the Caccioppoli-type inequality in Lemma \ref{cacctypeineq} with the control of the mean squared oscillation given by Lemma \ref{bdrycontimppoincscal} to see that the re-scaled energy of free boundary stationary harmonic maps decays faster than implied by the energy monotonicity formula.
 
  \begin{lem}\label{impenscalbdry}
 	 There exist constants $R_1>0$, $\varepsilon_1 >0$ and $\theta_1 \in (0,\frac{1}{4})$ such that the following holds.   Suppose $v \in  {W}_{\beta}^{1,2}(\mathscr{U};\mathbb{S}^{n-1})$ is stationary harmonic with respect to free boundary data and consider a half ball $B^+_{R}(x_0)$ with $R \leq R_1$ and $B^+_{2R}(x_0)\subset \mathscr{U}$. If
 	\begin{equation*}
 	R^{2-m-\beta}\int_{B^+_R(x_0)} x_m^{\beta} \vert Dv\vert^2_{g}\mathrm{dvol}_{g}  \leq \varepsilon_1,
 	\end{equation*}
 	then for every $B^+_{\rho}(y)\subset B^+_R(x_0)$ with $y \in B_{\frac{R}{2}}(x_0) \cap (\mathbb{R}^{m-1}\times\{0\})$ and $\rho \leq \frac{R}{2}$ we have 
 	\begin{align}\label{impenscalbdryest}
 	&(\theta_1 \rho)^{2-m - \beta}\int_{B^+_{\theta_1 \rho}(y)} x_m^{\beta} \vert Dv\vert^2\mathrm{d}x 
   \leq  \frac{1}{2}\rho^{2-m - \beta}\int_{B^+_\rho(y)} x_m^{\beta} \vert Dv\vert^2\mathrm{d}x .
 	\end{align}
 \end{lem}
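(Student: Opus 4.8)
The plan is to combine the Caccioppoli-type inequality of Lemma~\ref{cacctypeineq}, the control of the $L^2$-norm from Lemma~\ref{bdrycontimppoincscal}, the $BMO$-bound of Lemma~\ref{statharmbmo}, and the monotonicity formula of Lemma~\ref{enmon} (equivalently Corollary~\ref{monotonicity_inequality_Euclidean}). First note that a weakly harmonic map $v$ into $\mathbb{S}^{n-1}$ with free boundary data satisfies \eqref{wELeqnsphere}, so that, since $|v|\equiv 1$,
\[
\left|\int_{\mathscr{U}} x_m^{\beta} g^{ij}\langle\partial_i\psi,\partial_jv\rangle\,\mathrm{dvol}_g\right| = \left|\int_{\mathscr{U}} x_m^{\beta}|Dv|_g^2\langle v,\psi\rangle\,\mathrm{dvol}_g\right| \le \int_{\mathscr{U}}|\psi|\,x_m^{\beta}|Dv|_g^2\,\mathrm{dvol}_g
\]
for all $\psi\in C_0^\infty(\mathscr{U};\mathbb{R}^n)$. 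Hence \eqref{wELeqnbound} holds with $C^{*}=1$, so Lemma~\ref{bdrycontimppoincscal} is applicable on a fixed compact $\mathscr{K}\subset\mathscr{U}$ containing the half-balls in play, with constants depending only on $m$, $\beta$ and the uniform metric bounds \eqref{metricchartbounds}--\eqref{metricchartderivbounds}.

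\textbf{Choice of constants.} Write $C$ for the relevant universal constants (the constant of Lemma~\ref{cacctypeineq}, the monotonicity constants from Lemma~\ref{enmon} and Corollary~\ref{monotonicity_inequality_Euclidean}, and the $BMO$-constant of Lemma~\ref{statharmbmo}). First fix $\delta_0>0$ so small that $C\delta_0\le\tfrac16$. With $\delta_0$ now frozen, fix $\delta>0$ so small that $C\delta_0^{-1}\delta\le\tfrac16$, and apply Lemma~\ref{bdrycontimppoincscal} with this $\delta$ and $C^{*}=1$; this produces $\varepsilon_0$, $\theta_0\in(0,\tfrac18]$ and $R_0>0$. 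Set $\theta_1:=\theta_0/2\in(0,\tfrac14)$. Finally fix $R_1\le\min\{R_0,1\}$ and then $\varepsilon_1>0$ small enough that $C\sqrt{\varepsilon_1}\le\tfrac16$ and $C\varepsilon_1\le\varepsilon_0$. (There is no circularity: $\varepsilon_1$ and $R_1$ are chosen last, after $\delta_0,\delta,\theta_0,\varepsilon_0$ are all fixed.)

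\textbf{The estimate.} Fix $y\in B_{R/2}(x_0)\cap(\mathbb{R}^{m-1}\times\{0\})$ and $\rho\le R/2$ with $B^+_\rho(y)\subset B^+_R(x_0)$. Since $\theta_0\le\tfrac18$ and $\rho\le R/2$ we have $B^+_{2\theta_0\rho}(y)\subset B^+_{2\rho}(y)\subset B^+_R(x_0)\subset\mathscr{U}$, so Lemma~\ref{cacctypeineq} applies on $B^+_{\theta_0\rho}(y)$ with parameter $\delta_0$: its conclusion \eqref{cacctypeineqest} (with $r=\theta_0\rho$ and centre $y$, noting $\theta_1\rho=\theta_0\rho/2$) bounds $(\theta_1\rho)^{2-m-\beta}\int_{B^+_{\theta_1\rho}(y)}x_m^{\beta}|Dv|^2$ by the sum of $\big(C[\tilde v]_{BMO(B_{\theta_0\rho}(y))}+\delta_0\big)$ times $(\theta_0\rho)^{2-m-\beta}\int_{B^+_{\theta_0\rho}(y)}x_m^{\beta}|Dv|^2$ and $C\delta_0^{-1}$ times $(\theta_0\rho)^{-(m+\beta)}\int_{B^+_{\theta_0\rho}(y)}x_m^{\beta}|v-\overline{v}_{B^+_{\theta_0\rho}(y),\beta}|^2$. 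I would then estimate the three ingredients. The monotonicity formula of Lemma~\ref{enmon} on half-balls centred at $y$ (valid as $B^+_{2\rho}(y)\subset\mathscr{U}$), together with the uniform comparability of $|Dv|_g^2\,\mathrm{dvol}_g$ and $|Dv|^2\,\mathrm{d}x$, gives $(\theta_0\rho)^{2-m-\beta}\int_{B^+_{\theta_0\rho}(y)}x_m^{\beta}|Dv|^2\le C\rho^{2-m-\beta}\int_{B^+_\rho(y)}x_m^{\beta}|Dv|^2$, and combined with Corollary~\ref{monotonicity_inequality_Euclidean} (with its ``$x_0$'' our $x_0$, ``$y$'' our $y$, radius $\le\tfrac18 R$) it also gives $\rho^{2-m-\beta}\int_{B^+_\rho(y)}x_m^{\beta}|Dv|^2\le C\varepsilon_1\le\varepsilon_0$. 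Applying Lemma~\ref{statharmbmo} at $y$ with radius $2\theta_0\rho$ and using the same monotonicity yields $[\tilde v]_{BMO(B_{\theta_0\rho}(y))}\le C\sqrt{\varepsilon_1}$. The crucial point is that the radius $\theta_0\rho$ in the $L^2$-term above is \emph{exactly} the one produced by Lemma~\ref{bdrycontimppoincscal}: applying that lemma with ``$R$''$=\rho$, ``$x_0$''$=y$ — legitimate because $\rho\le R_0$ and $\rho^{2-m-\beta}\int_{B^+_\rho(y)}x_m^{\beta}|Dv|^2\le\varepsilon_0$ — bounds the $L^2$-term by $C\delta_0^{-1}\delta\,\rho^{2-m-\beta}\int_{B^+_\rho(y)}x_m^{\beta}|Dv|^2$. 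Substituting the three bounds,
\[
(\theta_1\rho)^{2-m-\beta}\int_{B^+_{\theta_1\rho}(y)} x_m^{\beta}|Dv|^2 \le \Bigl(C\sqrt{\varepsilon_1}+C\delta_0+C\delta_0^{-1}\delta\Bigr)\rho^{2-m-\beta}\int_{B^+_\rho(y)} x_m^{\beta}|Dv|^2,
\]
and by the choices of $\delta_0$, $\delta$, $R_1$, $\varepsilon_1$ the bracket is $\le\tfrac12$, which is \eqref{impenscalbdryest}.

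\textbf{Main obstacle.} The heavy analytic work is already contained in Lemmas~\ref{cacctypeineq}, \ref{bdrycontimppoincscal} and \ref{statharmbmo}; the delicate part here is the bookkeeping. One must verify that each auxiliary lemma is applied with constants uniform in $y$, $\rho$ and in the small parameters being selected; that the scale $\theta_0$ at which the Caccioppoli inequality is invoked is \emph{forced} to coincide with the output of Lemma~\ref{bdrycontimppoincscal}, so that the $L^2$-term over $B^+_{\theta_0\rho}(y)$ is precisely the quantity that lemma controls; and that every sub-half-ball used, namely $B^+_{\theta_0\rho}(y)$, $B^+_{2\theta_0\rho}(y)$, $B^+_\rho(y)$ and their doubles, stays inside the chart $\mathscr{U}$ (respectively $\mathscr{V}$) where \eqref{metricchartbounds}--\eqref{metricchartderivbounds}, the monotonicity formula and the $BMO$-estimate are available — which is ensured by $\theta_0\le\tfrac18$, $y\in B_{R/2}(x_0)$, $\rho\le R/2$ and $B^+_{2R}(x_0)\subset\mathscr{U}$.
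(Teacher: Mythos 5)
Your proof is correct and follows essentially the same route as the paper's: apply Corollary~\ref{monotonicity_inequality_Euclidean} to bring the rescaled energy at intermediate scales under control, invoke Lemma~\ref{bdrycontimppoincscal} with $C^*=1$ at the scale $\rho$, feed its output together with the $BMO$-bound of Lemma~\ref{statharmbmo} into the Caccioppoli inequality of Lemma~\ref{cacctypeineq} applied at radius $\theta_0\rho$, and absorb the three small factors into $\tfrac12$. Your ordering of the small-parameter choices (fix $\delta_0$, then $\delta$, then apply Lemma~\ref{bdrycontimppoincscal} to obtain $\theta_0,\varepsilon_0,R_0$, then fix $R_1,\varepsilon_1$) is in fact a slightly cleaner presentation of the same bookkeeping that the paper carries out, and the split $\tfrac16+\tfrac16+\tfrac16$ versus the paper's $\tfrac18+\tfrac18+\tfrac14$ is immaterial.
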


\begin{proof}
	Suppose $v$ satisfies $R^{2-m - \beta}\int_{B^+_R(x_0)} x_m^{\beta} \vert Dv\vert^2_{g}\mathrm{dvol}_{g}  \leq \varepsilon_1$  for $\varepsilon_1 >0$ to be chosen. Corollary \ref{monotonicity_inequality_Euclidean} then implies that for any $\rho  \in (0,\frac{R}{2}]$ and $y \in B_{\frac{R}{2}}(x_0) \cap (\mathbb{R}^{m-1}\times\{0\})$ we have 
	\begin{align}\label{bdryenergydecaymonbound}
	\rho^{2-m-\beta}\int_{B^+_{\rho}(y)} x_m^{\beta} \vert Dv\vert^2_{g}\mathrm{dvol}_{g} & \leq C\left(\frac{R}{2}\right)^{2-m-\beta}\int_{B^+_{\frac{R}{2}}(y)} x_m^{\beta} \vert Dv\vert^2_{g}\mathrm{dvol}_{g} \nonumber\\
	& \leq C R^{2-m-\beta}\int_{B^+_{R}(x_0)} x_m^{\beta} \vert Dv\vert^2_{g}\mathrm{dvol}_{g} \nonumber\\
	& \leq  C\varepsilon_1.
	\end{align}
    Since $v$  satisfies \eqref{wELeqnsphere} and $\vert v\vert = 1$ almost everywhere it satisfies  \eqref{wELeqnbound} with $C^{*} = 1$ for every $\psi \in C_0^{\infty}(\tilde{\mathscr{U}},\mathbb{R}^n)$. Then for every $\delta >0$ there exist constants $\varepsilon_0 = \varepsilon_0(\delta)>0$, $\theta_0 = \theta_0(\delta) \in (0,\frac{1}{8}]$ and $R_0 = R_0(\delta) \in(0,1]$ such that the conclusion of Lemma  \ref{bdrycontimppoincscal} holds. These numbers also may also depend on $m,\beta$ and other uniform constants but only the dependence on $\delta$ is important here. We henceforth assume $R_1\leq R_0$   and $\varepsilon_1 \leq C^{-1}\varepsilon_0$ where $C$ is the constant from \eqref{bdryenergydecaymonbound};  we will subsequently fix $\delta$ uniformly. In view of  \eqref{bdryenergydecaymonbound} and the choice of $\varepsilon_1$ we have $\rho^{2-m-\beta}\int_{B^+_\rho(y)} x_m^{\beta}\vert Dv\vert^2_{g}\mathrm{dvol}_{g} \leq \varepsilon_0$ and hence it follows from  Lemma  \ref{bdrycontimppoincscal} that
 	\begin{equation*}
	(\theta_0\rho)^{-(m+\beta)}\int_{B^+_{\theta_0 \rho}(y)}  x_m^{\beta} \left\vert   v - \overline{v}_{B^+_{\theta_0 \rho}(y),\beta}\right\vert^2\mathrm{d}x  \leq \delta  \rho^{2-m-\beta}\int_{B^+_\rho(y)}  x_m^{\beta} \vert Dv\vert^2_{g}\mathrm{dvol}_{g}.  
	\end{equation*}
	Let $\tilde{v} \in W^{1,2}_{\beta}(\mathscr{V};\mathbb{S}^{n-1})$ denote the even reflection of $v$ in $\mathbb{R}^{m-1}\times\{0\}$. We 
    apply  Lemma \ref{cacctypeineq} followed by Lemma \ref{bdrycontimppoincscal}, together with Corollary \ref{monotonicity_inequality_Euclidean} to see that 
	\begin{align}\label{comblemmafordecay}
	&\left(\frac{\theta_0\rho }{2}\right)^{2-m-\beta}\int_{B^+_\frac{\theta_0\rho}{2}(y)} x_m^{\beta} \vert Dv\vert^2\mathrm{d}x\nonumber\\
	&  \leq C( [\tilde{v}]_{BMO(B_{\theta_0\rho} (y))} + \delta_0) (\theta_0\rho )^{2-m-\beta}\int_{B^+_{\theta_0\rho}(y)} x_m^{\beta} \vert Dv\vert^2\mathrm{d}x \nonumber\\
	& \quad +  C\delta_0^{-1}({\theta_0\rho})^{-(m+\beta)}\int_{B^+_{\theta_0\rho}(y)} x_m^{\beta} \vert   v - \overline{v}_{B_r(y),\beta}\vert^2\mathrm{d}x\nonumber\\
	&  \leq   C([\tilde{v}]_{BMO(B_{\theta_0\rho} (y))} +  \delta_0) \rho^{2-m-\beta}\int_{B^+_\rho(y)}  x_m^{\beta} \vert Dv\vert^2\mathrm{d}x \nonumber\\
	& \quad +  C\delta_0^{-1}\delta \rho^{2-m-\beta}\int_{B^+_\rho(y)}  x_m^{\beta} \vert Dv\vert^2\mathrm{d}x\nonumber\\
	&  = C([\tilde{v}]_{BMO(B_{\theta_0\rho} (y))} +  \delta_0 + \delta\delta_0^{-1}) \rho^{2-m-\beta}\int_{B^+_\rho(y)}  x_m^{\beta} \vert Dv\vert^2\mathrm{d}x  .
	\end{align} 
	Now first choose  $\delta_0 = \frac{1}{8C}$ and then choose $\delta = \frac{1}{64 C^2}$ where $C$ is the constant from \eqref{comblemmafordecay}. This fixes $\varepsilon_1$ and $\theta_1:=\frac{\theta_0}{2} \in (0,\frac{1}{8}]$ uniformly.  It hence follows from \eqref{comblemmafordecay} that 
	\begin{align}\label{decayfixeddeltas} 
	\left(\theta_0\rho \right)^{2-m-\beta}\int_{B^+_{\theta_1\rho}(y)} x_m^{\beta} \vert Dv\vert^2\mathrm{d}x &  \leq  (C[\tilde{v}]_{BMO(B_{\theta_1\rho} (y))} + \frac{1}{4}) \rho^{2-m-\beta}\int_{B^+_\rho(y)}  x_m^{\beta} \vert Dv\vert^2\mathrm{d}x .
	\end{align} 
    Since $\theta_1\rho \leq \frac{\rho}{6}$, we may apply Lemma \ref{statharmbmo} to see that 
	\begin{align}\label{energydecaysmallconstant}
	\left(\theta_1\rho\right)^{2-m-\beta}\int_{B^+_{\theta_0\rho}(y)} x_m^{\beta} \vert Dv\vert^2\mathrm{d}x &  \leq  (\tilde{C}\varepsilon_1^{\frac{1}{2}} + \frac{1}{4}) \rho^{2-m-\beta}\int_{B^+_\rho(y)}  x_m^{\beta} \vert Dv\vert^2\mathrm{d}x.
	\end{align}
		To conclude the proof we fix $\varepsilon_0 = \min\{\frac{1}{16\tilde{C}^2}, C^{-1}\varepsilon_0\}$ where $C$ is the constant from \eqref{bdryenergydecaymonbound}.  
	 \end{proof}

 \section{$\varepsilon$-Regularity Near the Boundary and Partial Regularity}\label{epsilonreg}
 
 With the results of the previous sections in hand, we are now in a position to prove $\varepsilon$-regularity for stationary harmonic maps with free boundary data near the boundary. 
 
 \begin{thm}\label{epregthm}
 	There exists $\varepsilon_2 >0, R_2>0$ and $\gamma,\theta_2 \in (0,1)$ such that the following holds.  Suppose $v \in W^{1,2}_{\beta}(\mathscr{U};\mathbb{S}^{n-1})$ is a weakly stationary harmonic map respect to free boundary data.
  	 If $B^+_R(x_0)$ is a half-ball with $R \leq R_2$, $B_{2R}^+(x_0) \subset \mathscr{U}$ and
 	\begin{equation*}
 	R^{2-m-\beta}\int_{B^+_R(x_0)}x_{m}^{\beta}\vert Dv\vert^2_{g}\mathrm{dvol}_{g} \leq \varepsilon_2
 	\end{equation*}
 	then $v \in C^{0,\gamma}(\overline{B^+_{\theta_2 R}(x_0)};\mathbb{S}^{n-1})$. 
 \end{thm}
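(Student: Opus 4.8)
The plan is to combine the $\varepsilon$-regularity result in the interior (standard theory for stationary harmonic maps) with the boundary decay estimate of Lemma~\ref{impenscalbdry}, using the latter to set up a Morrey-type iteration that upgrades smallness of the rescaled energy into an honest Morrey bound on $d^\beta|Dv|^2$, which in turn yields H\"older continuity via a characterization of H\"older spaces by Campanato-type (weighted Morrey) decay.

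\begin{proof}
Let $R_1, \varepsilon_1, \theta_1$ be the constants from Lemma~\ref{impenscalbdry}, and set $R_2 := R_1$ and $\varepsilon_2 := \varepsilon_1$. Suppose $B_R^+(x_0)$ is a half-ball with $R \le R_2$, $B_{2R}^+(x_0) \subset \mathscr{U}$, and
\[
R^{2-m-\beta}\int_{B_R^+(x_0)} x_m^\beta |Dv|_g^2 \, \mathrm{dvol}_g \le \varepsilon_2 .
\]
\textbf{Step 1 (iteration of the boundary decay).} Fix any $y \in B_{R/2}(x_0) \cap (\mathbb{R}^{m-1}\times\{0\})$. By Lemma~\ref{impenscalbdry}, for every $\rho \le R/2$ we have
\[
(\theta_1\rho)^{2-m-\beta}\int_{B_{\theta_1\rho}^+(y)} x_m^\beta |Dv|^2 \, \mathrm{d}x \le \tfrac12\, \rho^{2-m-\beta}\int_{B_\rho^+(y)} x_m^\beta |Dv|^2\, \mathrm{d}x .
\]
Since the hypothesis of Lemma~\ref{impenscalbdry} is inherited on every ball $B_\rho^+(y) \subset B_R^+(x_0)$ (with the same $\varepsilon_1$, using Corollary~\ref{monotonicity_inequality_Euclidean} exactly as in the derivation of \eqref{bdryenergydecaymonbound}), we may iterate: for every $k \in \mathbb{N}$,
\[
(\theta_1^k\tfrac R2)^{2-m-\beta}\int_{B_{\theta_1^k R/2}^+(y)} x_m^\beta |Dv|^2\, \mathrm{d}x \le 2^{-k} \, (\tfrac R2)^{2-m-\beta}\int_{B_{R/2}^+(y)} x_m^\beta |Dv|^2\, \mathrm{d}x .
\]
Interpolating over the dyadic scales $\theta_1^k R/2$ in the standard way, one obtains $\gamma_0 \in (0,1)$, depending only on $\theta_1$, and a constant $C$ such that
\[
\rho^{2-m-\beta}\int_{B_\rho^+(y)} x_m^\beta |Dv|^2 \, \mathrm{d}x \le C \left(\frac{\rho}{R}\right)^{2\gamma_0} R^{2-m-\beta}\int_{B_R^+(x_0)} x_m^\beta |Dv|^2\, \mathrm{d}x
\]
for all $y \in B_{R/2}(x_0)\cap(\mathbb{R}^{m-1}\times\{0\})$ and all $\rho \le R/2$.

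\textbf{Step 2 (from boundary-centred to arbitrary balls).} For an arbitrary ball $B_r(y) \subset B_{R/2}^+(x_0)$ with $y = (y',y_m)$, $y_m \ge 0$: if $y_m \ge r$, the metric $h$ is uniformly equivalent on $B_r(y)$ to a bounded smooth metric (the factor $x_m^\beta$ is bounded above and below by multiples of $y_m^\beta$), so the interior $\varepsilon$-regularity theory for stationary harmonic maps (for sphere targets, Evans \cite{MR1143435}; see also \cite{moser2005partial}) applies on the rescaled ball after the normalization $x\mapsto v(y+rx)$, as in the treatment of the case $r/2 \le d(p)\le R$ in the proof of Corollary~\ref{monotonicity_inequality}; smallness of the rescaled energy on $B_r(y)$ is guaranteed by Corollary~\ref{monotonicity_inequality_Euclidean}. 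If $y_m < r$, then $B_r(y) \subset B_{3r}^+(y',0)$ with $(y',0) \in \mathbb{R}^{m-1}\times\{0\}$, so Step~1 applies to the boundary-centred ball $B_{3r}^+(y',0)$. In either case we deduce the weighted Morrey decay
\[
r^{2-m-\beta}\int_{B_r(y)\cap\mathscr{U}} x_m^\beta |Dv|^2 \, \mathrm{d}x \le C\varepsilon_2 \left(\frac{r}{R}\right)^{2\gamma}
\]
for some $\gamma \in (0,\gamma_0]$, uniformly over $B_r(y) \subset B_{R/2}^+(x_0)$.

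\textbf{Step 3 (Campanato estimate and conclusion).} Consider the even reflection $\tilde v$ on $\mathscr{V}$. By Lemma~\ref{statharmbmo} applied on small balls, combined with the decay from Step~2 (which, through Lemma~\ref{statharmbmo}'s proof, controls $\frac1{|B_r(y)|}\int_{B_r(y)} |\tilde v - \overline{\tilde v}_{B_r(y)}|\, \mathrm{d}x$ by $C(r/R)^\gamma (R^{2-m-\beta}\int_{B_R^+} x_m^\beta|Dv|^2)^{1/2}$), we obtain a Campanato-type estimate: there is $\theta_2 \in (0,1)$ such that for all $B_r(y) \subset B_{\theta_2 R}(x_0)$,
\[
\frac{1}{|B_r(y)|}\int_{B_r(y)} |\tilde v - \overline{\tilde v}_{B_r(y)}|\, \mathrm{d}x \le C r^\gamma .
\]
By the Campanato characterization of H\"older continuity (for the unweighted Lebesgue measure, valid since $B_r(y)$ are ordinary Euclidean balls), $\tilde v \in C^{0,\gamma}(\overline{B_{\theta_2 R}(x_0)})$, hence $v \in C^{0,\gamma}(\overline{B_{\theta_2 R}^+(x_0)};\mathbb{S}^{n-1})$.
\end{proof}

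The main obstacle is Step~2: one must verify that the boundary decay estimate of Lemma~\ref{impenscalbdry}, which is formulated only for balls centred on $\mathbb{R}^{m-1}\times\{0\}$, can be transferred to balls centred at interior points close to the boundary, and that this transfer is compatible with the interior $\varepsilon$-regularity theory for stationary harmonic maps near --- but not on --- the degenerate boundary. The interplay of the two regimes ($y_m \ge r$ versus $y_m < r$) and the uniformity of constants as $y_m \to 0$ requires care, and is the technical heart of the argument; it is handled by the same rescaling and ball-comparison techniques already employed in the proof of Corollary~\ref{monotonicity_inequality}.
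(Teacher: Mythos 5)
Your overall strategy matches the paper's: iterate Lemma~\ref{impenscalbdry} to obtain a power decay of the rescaled energy at boundary-centred half-balls, invoke interior $\varepsilon$-regularity for stationary harmonic maps for balls away from the degenerate boundary, and then conclude H\"older continuity from a Morrey/Campanato-type decay argument (the paper compresses the last step into a citation of a weighted Morrey decay lemma, \cite{roberts2018regularity} Lemma 4.8, applied to the pair of estimates \eqref{energydecayiterategammasubgeneralt} and \eqref{energydecayinterior}).

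However, Step~2 of your proposal is not adequately justified as written. You claim a \emph{uniform weighted} Morrey decay
\[
r^{2-m-\beta}\int_{B_r(y)\cap\mathscr{U}} x_m^\beta |Dv|^2 \, \mathrm{d}x \le C\varepsilon_2 \left(\frac{r}{R}\right)^{2\gamma}
\]
for all balls $B_r(y)\subset B_{R/2}^+(x_0)$, including those with $y_m \ge r$, and you obtain it by applying interior $\varepsilon$-regularity after the normalization $x\mapsto v(y+rx)$. For $\beta>0$ and $y_m\gg r$, this does not work directly: on $B_r(y)$ one has $x_m^\beta\approx y_m^\beta$, so
\[
r^{2-m-\beta}\int_{B_r(y)} x_m^\beta |Dv|^2 \, \mathrm{d}x \approx \left(\frac{y_m}{r}\right)^\beta r^{2-m}\int_{B_r(y)} |Dv|^2 \, \mathrm{d}x,
\]
and the factor $(y_m/r)^\beta$ is unbounded as $r/y_m\to 0$. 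Applying interior $\varepsilon$-regularity at scale $r$ together with Corollary~\ref{monotonicity_inequality_Euclidean} controls $r^{2-m}\int_{B_r(y)}|Dv|^2$ only by a constant, so the prefactor $(y_m/r)^\beta$ is not absorbed and the claimed weighted decay does not follow. To rescue this one must apply interior $\varepsilon$-regularity at scale $\sim y_m$ (combined with the boundary decay from Step~1 at scale $\sim y_m$), which gives the pointwise bound $|Dv|\lesssim y_m^{-1}\sqrt{\varepsilon_2}\,(y_m/R)^{\gamma/2}$ on $B_{y_m/4}(y)$, and then integrate; this yields decay of rate $(r/y_m)^{2-\beta}(y_m/R)^{\gamma}$, which is controlled since $2-\beta>\gamma$.

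There is also a mismatch between what Step~2 claims and what Step~3 actually consumes. The oscillation bound from the proof of Lemma~\ref{statharmbmo} for balls with $y_m\ge r$ comes from the \emph{ordinary} Poincar\'e inequality together with the \emph{unweighted} Morrey quantity $r^{2-m}\int_{B_r(y)}|Dv|^2$; the weighted version is used only when $y_m< r$. So the natural formulation is a pair of decay estimates --- the weighted one for near-boundary balls ($y_m<r$) and the unweighted one for interior balls ($y_m\ge r$) --- feeding the two branches of Lemma~\ref{statharmbmo}'s proof. This is exactly what the paper keeps (equations \eqref{energydecayiterategammasubgeneralt} and \eqref{energydecayinterior}) before invoking Lemma~4.8 of \cite{roberts2018regularity}. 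Your Step~3 is correct once the right pair of decay estimates is in place, so the gap is in the derivation and formulation of Step~2 rather than in the overall strategy.
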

 \begin{proof}
 	Our goal is to apply a version of Morrey's Decay Lemma, see for example \cite{moser2005partial} Lemma 2.1 for a precise statement.  Let $\varepsilon_1 >0$, ${R}_1>0$ and $\theta_1 \in (0,\frac{1}{8}]$ be the numbers from Lemma \ref{impenscalbdry}.  Consider $B^+_{\rho}(y)$ with $\rho  \in (0,\frac{R}{2}]$ and $y \in B_{\frac{R}{2}}(x_0) \cap (\mathbb{R}^{m-1}\times\{0\})$.  If $\varepsilon_2 \leq \varepsilon_1$ then the lemma applies and iterating \eqref{impenscalbdryest} for $k \in \mathbb{N}\cup\{0\}$ yields
 	\begin{equation}\label{energydecayiterate}
 	\left(\theta_1^k\rho  \right)^{2-m-\beta}\int_{B^+_{\theta_1^k\rho}(y)} x_m^{\beta} \vert Dv\vert^2\mathrm{d}x      \leq  \frac{1}{2^k} \rho^{2-m-\beta}\int_{B^+_\rho(y)}  x_m^{\beta} \vert Dv\vert^2\mathrm{d}x.
 	\end{equation}
 	Now set $\gamma_0  = -\ln(2)(\ln(\theta_1))^{-1}\in(0,1)$. Then we have 
 	\begin{equation}\label{energydecayiterategammasub}
 	\left(\theta_1^k\rho  \right)^{2-m-\beta}\int_{B^+_{\theta_1^k\rho}(y)} x_m^{\beta} \vert Dv\vert^2\mathrm{d}x      \leq  \frac{(\theta_1^{k}\rho)^{\gamma_0}}{\rho^{\gamma_0}} \rho^{2-m-\beta}\int_{B^+_\rho(y)}  x_m^{\beta} \vert Dv\vert^2\mathrm{d}x.
 	\end{equation}
 	Now let $t \leq \rho$. Then $t \in [\theta_1^{k+1}\rho,\theta_1^{k}\rho]$ for some $k \in \mathbb{N}\cup\{0\}$ and   we see that  
 	\begin{align}\label{energydecayiterategammasubgeneralt}
 	t^{2-m-\beta}\int_{B^+_t(y)}  x_m^{\beta} \vert Dv\vert^2\mathrm{d}x & \leq \left(\theta_1^{k+1}\rho  \right)^{2-m-\beta}\int_{B_{\theta_1^{k}\rho}(y)} x_m^{\beta} \vert Dv\vert^2\mathrm{d}x\nonumber\\
 	& = \theta_1^{2-m-\beta}  \left(\theta_1^{k}\rho  \right)^{2-m-\beta}\int_{B^+_{\theta_1^{k}\rho}(y)} x_m^{\beta} \vert Dv\vert^2\mathrm{d}x\nonumber\\
 	&     \leq  C\frac{(\theta_1^{k}\rho)^{\gamma_0}}{\rho^{\gamma_0}} \rho^{2-m-\beta}\int_{B^+_\rho(y)}  x_m^{\beta} \vert Dv\vert^2\mathrm{d}x\nonumber\\
 	& \leq    C \left(\frac{t}{\rho}\right)^{\gamma_0} \rho^{2-m-\beta}\int_{B^+_\rho(y)}  x_m^{\beta} \vert Dv\vert^2\mathrm{d}x.
 	\end{align}
 	Now we consider balls in $B_\rho(y) \subset B^+_R(x_0)$ with $\text{dist}(B_\rho(y);\mathbb{R}^{m-1}\times\{0\}) \geq \rho$ and $y \in B^+_{\frac{R}{2}}(x_0)$. In this case, the
Riemannian metric, restricted to $B_\rho(y)$ and multiplied by $\rho^{-\alpha}$, is
uniformly bounded from above and below. We have similar control for all
the derivatives of this multiple of the metric in $B_\rho(y)$.
It hence follows from the monotonicity inequality, Corollary \ref{monotonicity_inequality_Euclidean},
and from \cite{MR1208652} and \cite{MR765241} that there is a constant $C$ and a $\gamma_1$ which do not depend on $\rho$ or $y$  such that for $t \leq \rho$
    \begin{align}\label{energydecayinterior}
    t^{2-m}\int_{B_t(y)}  \vert Dv\vert^2\mathrm{d}x  
    & \leq    C \left(\frac{t}{\rho}\right)^{\gamma_0} \rho^{2-m}\int_{B_\rho(y)}  \vert Dv\vert^2\mathrm{d}x.
    \end{align}
 	Setting $\gamma = \min\{\gamma_0,\gamma_1\}$ the remainder of the proof follows exactly as in the proof of Theorem 4.21 of \cite{roberts2018regularity}; we combine \eqref{energydecayiterategammasubgeneralt}, \eqref{energydecayinterior} and an application of  a version of Morrey's Decay Lemma adapted to the situation considered here, see \cite{roberts2018regularity} Lemma 4.8,  to conclude  the proof. 
 	 \end{proof}
 
 From $\varepsilon$-regularity we deduce partial regularity using a covering argument.   
 Let $\mathscr{H}^t$ denote the $t$-dimensional Hausdorff measure. The following is the full version
 of Theorem \ref{mainthmabridged}.
 
 \begin{thm}\label{partialregbdrycoordchart}
Let $\beta \in (-1,1)$. 	Suppose $v \in W^{1,2}_{\beta}(\mathcal{M};\mathbb{S}^{n-1})$ is weakly stationary harmonic with respect to free
boundary data.
There exists a relatively closed set $\Sigma\subset\mathcal{M}$ with $\mathscr{H}^{m-2+\beta}(\Sigma\cap \mathrm{int} \mathcal{M}) = 0$ and $\mathscr{H}^{m-2}(\Sigma\cap\partial \mathcal{M}) = 0$  and there exists $\gamma \in (0, 1)$
such that $v \in C^{0,\gamma}_{loc}(\mathcal{M} \setminus \Sigma;\mathcal{N})$.
 \end{thm}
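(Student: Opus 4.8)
The plan is to define the singular set as the complement of the set of points near which the rescaled energy is small, and then use the two $\varepsilon$-regularity statements — the interior theory (Evans \cite{MR1143435}, Bethuel \cite{MR1208652}) away from $\partial\mathcal{M}$ and Theorem \ref{epregthm} at the boundary — together with a standard covering argument to bound the Hausdorff measure of the singular set. Concretely, set
\[
\Sigma = \left\{ p \in \mathcal{M} : \liminf_{r \to 0^+} \frac{r^2}{\omega(p,r)} \int_{B_r^g(p)} d^\beta |Dv|_g^2 \, \mathrm{dvol}_g > 0 \right\}.
\]
By the monotonicity inequality of Corollary \ref{monotonicity_inequality}, the function $r \mapsto \frac{Cr^2}{\omega(p,r)}\int_{B_r^g(p)} d^\beta|Dv|_g^2\,\mathrm{dvol}_g$ is essentially monotone, so the $\liminf$ is in fact a limit (up to the constant $C$) and $\Sigma$ can equivalently be described as the set where this limit exceeds a fixed threshold $\varepsilon$. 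If $p \notin \Sigma$ then for some small $r$ the rescaled energy on $B_r^g(p)$ is below the relevant $\varepsilon_2$ (or its interior analogue), and translating into the Euclidean coordinates of Section \ref{metriccoord} via \eqref{comparisonofballs} — and noting $\omega(p,r) \sim r^m(\max\{r,d(p)\})^\beta$ as in the proof of Corollary \ref{monotonicity_inequality} — Theorem \ref{epregthm} (near $\partial\mathcal{M}$) or the interior regularity theory (when $d(p) > 0$, after the rescaling of the metric used in Corollary \ref{monotonicity_inequality}) yields $v \in C^{0,\gamma}$ in a neighbourhood of $p$. This shows $v \in C^{0,\gamma}_{\mathrm{loc}}(\mathcal{M}\setminus\Sigma;\mathcal{N})$ and, since the rescaled energy is upper semicontinuous in $p$ by the monotonicity, that $\Sigma$ is relatively closed.

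For the dimension bounds, I would treat $\Sigma \cap \mathrm{int}(\mathcal{M})$ and $\Sigma \cap \partial\mathcal{M}$ separately. Away from the boundary, the weight $d^\beta$ is bounded above and below on any compact subset of $\mathrm{int}(\mathcal{M})$, so $\Sigma \cap \mathrm{int}(\mathcal{M})$ coincides with the singular set of a genuine stationary harmonic map with respect to a smooth metric (after multiplying the metric by the locally constant factor, as in Corollary \ref{monotonicity_inequality}); the classical result of Evans--Bethuel gives $\mathscr{H}^{m-2}(\Sigma \cap \mathrm{int}(\mathcal{M})) = 0$, which is stronger than the claimed $\mathscr{H}^{m-2+\beta}$ bound when $\beta > 0$ and still implies it when $\beta \le 0$ since $\mathscr{H}^{m-2+\beta}$ is then a finer measure — wait, here one must be careful about the direction, so I would simply invoke $\mathscr{H}^{m-2}(\Sigma\cap\mathrm{int}\mathcal{M})=0$ directly, matching the statement in Theorem \ref{mainthmabridged}. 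For $\Sigma \cap \partial\mathcal{M}$, work in a fixed coordinate chart $x : \mathcal{U} \to \mathscr{U}$ and cover $\Sigma \cap \partial\mathcal{M} \cap \mathcal{K}$, for $\mathcal{K}$ compact, by Euclidean balls $B_{r_i}(x_i)$ with $x_i \in \Sigma$: by a Vitali-type covering lemma one may assume the $B_{r_i/5}(x_i)$ are disjoint. On each such ball, the fact that $x_i \in \Sigma$ forces
\[
\int_{B_{5r_i}^+(x_i)} x_m^\beta |Dv|_g^2 \, \mathrm{dvol}_g \ge c\, \varepsilon \, r_i^{m+\beta-2},
\]
and since $x_m \le 5r_i$ on $B_{5r_i}^+(x_i)$ and the balls $B_{r_i/5}(x_i)$ are disjoint and contained in a fixed bounded region, summing gives $\sum_i r_i^{m+\beta-2}$ bounded by $c^{-1}\varepsilon^{-1}$ times a tail of the finite integral $\int_{B_R^+} x_m^\beta |Dv|_g^2\,\mathrm{dvol}_g$; by absolute continuity of the integral this tail tends to $0$ as $\sup_i r_i \to 0$, so $\mathscr{H}^{m+\beta-2}(\Sigma\cap\partial\mathcal{M}\cap\mathcal{K}) = 0$, and exhausting $\partial\mathcal{M}$ by such $\mathcal{K}$ completes the boundary bound. (In the theorem statement the boundary exponent is written $m-2$; I would note that $\mathscr{H}^{m+\beta-2}=0$ implies $\mathscr{H}^{m-2}=0$ when $\beta\ge 0$ and vice versa when $\beta\le 0$, and state whichever is consistent with the abridged Theorem \ref{mainthmabridged}, namely $\mathscr{H}^{m+\beta-2}(\Sigma\cap\partial\mathcal{M})=0$.)

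The main obstacle is the book-keeping at the interface between the two regimes: a point $p$ with $d(p)$ comparable to, but not equal to, the scale $r$ at which one tests the energy is covered neither by a clean interior ball nor by a half-ball centred on $\partial\mathcal{M}$. This is exactly the case already handled inside the proof of Corollary \ref{monotonicity_inequality} by rescaling the metric $\tilde h(x) = r^{-\alpha}h(x_0+rx)$ and by passing between balls centred at $p$ and at a nearest boundary point $p'$; I would reuse that dichotomy verbatim so that, for every $p\notin\Sigma$, one obtains a definite scale on which either Theorem \ref{epregthm} or the interior $\varepsilon$-regularity applies with uniform constants on compact sets. The other mildly delicate point is that the covering argument must be carried out in the Euclidean coordinate balls while the definition of $\Sigma$ is coordinate-free; the comparison \eqref{comparisonofballs} between geodesic and Euclidean balls, uniform on compact subsets of each chart, makes this translation routine. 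Everything else — upper semicontinuity of the density, relative closedness of $\Sigma$, and the final patching of charts — is standard.
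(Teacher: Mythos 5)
Your proof is correct and follows essentially the same route as the paper --- $\varepsilon$-regularity (Theorem \ref{epregthm} at the boundary, Evans--Bethuel in the interior), the monotonicity of Corollary \ref{monotonicity_inequality}, and a Vitali-type covering argument for the Hausdorff bound --- the only cosmetic difference being that you define $\Sigma$ uniformly via the energy density, whereas the paper splits it as $\Sigma_I\cup\Sigma_B$ with $\Sigma_I$ defined as the set of non-smoothness (these coincide by $\varepsilon$-regularity). You are also right that the exponents in the statement of Theorem \ref{partialregbdrycoordchart} are transposed relative to Theorem \ref{mainthmabridged} and to what the paper's own proof actually establishes, namely $\mathscr{H}^{m-2}(\Sigma\cap\mathrm{int}\,\mathcal{M})=0$ and $\mathscr{H}^{m+\beta-2}(\Sigma\cap\partial\mathcal{M})=0$; this is evidently a typographical slip, and your resolution is the correct one.
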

 
 \begin{proof}
It suffices to prove the statement in coordinate patches. Moreover, in the interior of $\mathcal{M}$,
the statement follows from the known regularity theory for harmonic maps, in particular \cite{MR1143435}.
Hence we may consider $\mathscr{U} \subset \mathbb{R}^{m - 1} \times [0, \infty)$ as in Section \ref{metriccoord} and regard $g$ as a Riemannian metric on $\mathscr{U}$,
as we did in the last few sections. 

Define
\begin{equation*}
\Sigma_{I} = \{x \in \mathscr{U} \cap (\mathbb{R}^{m - 1} \times (0, \infty)): v\ \text{is not smooth in any neighbourhood of}\ x\}. 
\end{equation*}
By definition $\mathscr{U}\backslash \Sigma_{I}$ is relatively
open and the partial regularity theory of Evans \cite{MR1143435}, implies that    $\mathscr{H}^{m-2}(\Sigma_I) = 0$.
 
  Now define
\begin{align*}
\Sigma_{B} = \{x \in \mathscr{U}\cap(\mathbb{R}^{m-1}\times\{0\}): \lim_{\rho \to 0^{+}}\rho^{2-m-\beta}\int_{B^+_{\rho}(y)}x_m^{\beta}\vert Dv\vert^2\mathrm{d}x >0\} 
\end{align*} 
and let $\Sigma = \Sigma_{B} \cup\Sigma_{I}$. Let $\partial^0\mathscr{U}:= \mathscr{U}\cap (\mathbb{R}^{m-1}\times\{0\})$.
 We show $\mathscr{U}\backslash \Sigma = \mathscr{U}\backslash\Sigma_{I} \cup  \partial^0\mathscr{U}\backslash \Sigma_{B}$ is relatively open in $\mathscr{U}$. Let $x \in  \mathscr{U}\backslash \Sigma$. If $x \in \mathbb{R}^{m - 1} \times (0, \infty)$  then by definition $v$ is smooth in a neighbourhood of $x$ and this neighbourhood is therefore contained in $\mathscr{U}\backslash\Sigma_{I}\subset \mathscr{U}\backslash \Sigma$. If $x \in \partial^0\mathscr{U}\backslash \Sigma_{B}$  then there exists $R>0$ such that $R^{2-m-\beta}\int_{B^+_{\rho}(x)}y_m^{\beta}\vert Dv\vert^2_{g}\mathrm{dvol}_{g} \leq \varepsilon_2$, where $\varepsilon_2$ is the number from Theorem \ref{epregthm}. Hence there exists $\theta_2,\gamma \in  (0,1)$  such that $v \in C^{0,\gamma}(\overline{B^+_{\theta_2 R}(x_0)};\mathbb{S}^{n-1})$. It then follows from regularity theory for harmonic maps, see e.g \cite{MR765241} Lemma 3.1, that since $v$ is continuous in $B^+_{\theta_2 R}(x_0)$ it is smooth there and hence $v$ is smooth in a neighbourhood of any point in $B^+_{\theta_2 R}(x_0)$. Furthermore, it follows from the proof of the theorem, see \eqref{energydecayiterategammasubgeneralt}, that  
 \begin{align}\label{energydecayiterategammasubgeneraltrec}
 t^{2-m-\beta}\int_{B^+_t(y)}  x_m^{\beta} \vert Dv\vert^2\mathrm{d}x & \leq
  C \left(\frac{t}{\rho}\right)^{\gamma_0} \rho^{2-m-\beta}\int_{B^+_\rho(y)}  x_m^{\beta} \vert Dv\vert^2\mathrm{d}x 
 \end{align}
 for every  $B^+_{\rho}(y)$ with $\rho  \in (0,\frac{R}{2}]$ and $y \in B_{\frac{R}{2}}(x_0) \cap (\mathbb{R}^{m-1}\times\{0\})$ and every $t \leq \rho$. Fixing $\rho = \frac{R}{2}$ we see that 
  \begin{align}\label{energydecayiterategammasubgeneraltpartialregapp}
 t^{2-m-\beta}\int_{B^+_t(y)}  x_m^{\beta} \vert Dv\vert^2\mathrm{d}x & \leq
 C \left(\frac{2t}{R}\right)^{\gamma_0} \left(\frac{R}{2}\right)^{2-m-\beta}\int_{B^+_\frac{R}{2}(y)}  x_m^{\beta} \vert Dv\vert^2\mathrm{d}x \nonumber\\
 & \leq C \left(\frac{t}{R}\right)^{\gamma_0} \varepsilon_2
 \end{align}
 and hence every $y \in B_{\frac{R}{2}}(x_0) \cap (\mathbb{R}^{m-1}\times\{0\})$ belongs to $\partial^0\mathscr{U}$. Setting $\sigma = \min\{\frac{1}{2},\theta_2\}$ we then see that $\overline{B^+_{\sigma R}(x_0)}\subset \mathscr{U}\backslash\Sigma$. Hence $\Sigma$ is relatively closed. We see that $\mathscr{H}^{m-2+\beta}(\Sigma_B) = 0$ using a covering argument analogous to that of the proof of  \cite{roberts2018regularity} Theorem 4.3.
 \end{proof}

\subsubsection*{Acknowledgements}
The second author was funded by the Royal Society Grant with Grant Number: RP\textbackslash R1\textbackslash180114.

 \bibliographystyle{plain}
 \bibliography{Stationaryharmonic}

\end{document}